    \theoremstyle{plain}
    \newtheorem{thm}{Theorem}[section]
    \newtheorem{lem}[thm]{Lemma}
    \newtheorem{prop}[thm]{Proposition}
    \newtheorem{cor}[thm]{Corollary}
\newenvironment{customthm}[1]
    {\innercustomthm}{\endinnercustomthm}
    \theoremstyle{definition}
    \newtheorem{defn}{Definition}[section]
    \newtheorem{exmp}{Example}
    [section]
    \newtheorem{question}{Question}
    \newtheorem*{thm*}{Theorem}
    \newtheorem*{prop*}{Proposition}
    \theoremstyle{remark}
    \newtheorem{rem}{Remark}[section]
\newcommand{\introthmname}{}
    \newtheorem{introthminn}{\introthmname}
\definecolor{ki}{RGB}{150,50,0}
\definecolor{ray}{RGB}{0,0,200}
\title{The Hom-Ext quiver and applications to exceptional collections}
\author{Kiyoshi Igusa}
\address{Department of Mathematics, Brandeis University, Waltham, MA 02454}
\email{igusa@brandeis.edu}
\thanks{Supported by the Simons Foundation}
\author{Ray Maresca}
\address{Department of Mathematics, Bowdoin College, 8600 College Station, Brunswick, Maine 04011}
\email{r.maresca@bowdoin.edu}
\begin{document}

\begin{abstract}
We study what we call the Hom-Ext quiver and characterize it as a type of `superquiver'. In type $\tilde{\mathbb{A}}$, the Hom-Ext quiver of an exceptional set is the tiling algebra of the corresponding geometric model. And, in that case, Hom-Ext quivers classify exceptional sets up to Dehn twist of the corresponding geometric model. We show that these Dehn twists are realized by twist functors and give autoequivalences of the derived category. We provide a generating set for the group of autoequivalences of the derived category in type $\tilde{\mathbb{A}}$, and show that the Hom-Ext quiver classifies exceptional sets up to {\color{black} the action of the subgroup of the automorphism group of the derived category generated by twist functors associated to exceptional cycles.} We introduce superquivers, which are a generalization of Hom-Ext quivers. Exceptional sets over finite acyclic quivers are realized as representations of superquivers. Throughout, we list several questions and conjectures that make for, what we believe, exciting new research.
\end{abstract}

\maketitle

\tableofcontents

\section{Introduction}

{\color{black}
Exceptional sequences are a topic of continuing interest in representation theory and combinatorics. The widespread interest in exceptional sequences comes from the many combinatorial models which were known earlier. Exceptional sequences were introduced algebraically in 1987 by \cite{gorrud1987exceptional} \cite{rudakov1990exceptional}. But equivalent combinatorial models were known much earlier since they are in bijection with factorizations of the Coxeter element of the Weyl group \cite{hurwitz1891riemann} \cite{looijenga1974complement}. This led to many combinatorial models for exceptional sequences {\color{black}\cite{araya2013exceptional} \cite{garver2015combinatorial} \cite{garver2019combinatorics} \cite{maresca2022combinatorics} \cite{chang2023exceptional}}.   However, some crucial properties of exceptional sequences, such as the fact that the braid group acts transitively on the set of exceptional sequences are only known to have representation theoretics proofs \cite{crawley1992exceptional}\cite{ringel1994braid}.} 

In this paper, we focus on exceptional collections (or exceptional sets), which are sets of modules that can be ordered into an exceptional sequence in at least one way. An interesting and natural question asked to both authors by Theo Douvropoulos is ``how many exceptional sequences correspond to an exceptional collection?" To answer this question, we define what we call the Hom-Ext quiver in Section \ref{sec: H-E quiver} (Definitions \ref{defn: H-E quiver in D} and \ref{defn: Hom-Ext quiver in repQ}). This is the quiver associated to an Ext algebra. Using the Hom-Ext quiver, we are able to answer the question posed by Theo:

\begin{customthm}{A}[Theorem \ref{thm: linear extensions and exceptional sequences}]\label{thm A}
For a finite acyclic quiver, the number of exceptional sequences associated to an exceptional collection of representations is equal to the number of linear extensions of the poset defined by the Hom-Ext quiver.
\end{customthm}

This result opens the door to several interesting relationships between the theory of exceptional collections and combinatorics as described in Section \ref{sec: H-E quiver counts excepitonal orderings}. For the remainder of the paper, we switch to a more algebraic analysis of Hom-Ext quivers. Namely, we wish to answer the questions ``given an arbitrary exceptional collection, how can we compute the Hom-Ext quiver? What does the Hom-Ext quiver tell us algebraically about the exceptional collection?"

In Section \ref{sec: H-E quiver is tiling algebra}, we begin to answer these questions for quivers of type $\tilde{\mathbb{A}}$, which we always take to be acyclic. In fact, we show that Hom-Ext quivers can be computed directly from geometric models. Namely, in Proposition \ref{prop: H-E quiver is tiling algebra}, we show that for exceptional sets in type $\tilde{\mathbb{A}}$, the path algebra of the Hom-Ext quiver is isomorphic to the tiling algebra of the corresponding geometric model in the sense of \cite{baur2021geometric}. This allows us to conclude that Hom-Ext quivers of exceptional sets in type $\tilde{\mathbb{A}}$ are gentle. In Section \ref{sec: prelim}, we describe all the notations used in our geometric models. These are the same notations that were used in \cite{maresca2022combinatorics}, and \cite{igusa2024clusters}.

Knowing that for exceptional collections in type $\tilde{\mathbb{A}}$, the Hom-Ext quiver is encoded in the corresponding geometric model, we study how the Hom-Ext quiver changes when a Dehn twist is applied to the corresponding geometric model and prove the following:

\begin{customthm}{B}[Theorem \ref{thm: iso H-E quiver iff Dehn twist}]\label{thm B}
Two exceptional sets in type $\tilde{\mathbb{A}}$ have {\color{black} isomorphic} Hom-Ext quivers if and only if their corresponding geometric models differ by a sequence of Dehn twists.
\end{customthm}

This is a satisfying geometric result because it allows us to classify the infinitely many exceptional sets in type $\tilde{\mathbb{A}}$ into finitely many families defined by isoclasses of Hom-Ext quivers. Moreover, these Dehn twists can be realized as twist functors \cite{opper2019auto}. We introduce our notation for twist functors and other autoequivalences in Section \ref{sec: H-E quiver and autoequivalences} following the notations of \cite{broomhead2017discrete} and \cite{opper2019auto}. In this section, we provide an explicit generating set for the group of autoequivalences of the bounded derived category of representations of a quiver of type $\tilde{\mathbb{A}}$ (Theorem \ref{thm: gen set for derived}). Using this, we prove the following:

\begin{customthm}{C}[Corollary \ref{cor: iso HE quivers iff derived equiv}] \label{thm C}
In type $\tilde{\mathbb{A}}$, two exceptional collections have {\color{black} isomorphic} Hom-Ext quivers if and only if they differ by {\color{black} a sequence of twist functors associated to exceptional cycles.}
\end{customthm}

This result is an algebraic reformulation of Theorem \ref{thm B}. Theorem \ref{thm C} leads us to believe that the Hom-Ext quiver is a promising tool in studying exceptional collections over other representation-infinite algebras.

Finally, in Section \ref{sec: superquivers}, we define superquivers (Definition \ref{defn: superquiver}), which are a generalization of Hom-Ext quivers. Using superquivers, we can realize exceptional sets as $\mathcal{C}$-representations of superquivers (functors from superquivers into a suitable triangulated category $\mathcal{C}$). We define twists of superquivers in terms of frozen arrows (Definition \ref{defn: superquiver twist}), and reformulate Theorem \ref{thm C} in this new language:

\begin{customthm}{D}[Theorem \ref{thm: superquiver twists}] \label{thm D}
In type $\tilde{\mathbb{A}}$, if two exceptional collections differ by {\color{black} a sequence of twist functors associated to exceptional cycles}, then their corresponding Hom-Ext quivers are twist equivalent as superquivers with frozen arrows.
\end{customthm}

Throughout the paper, we state some questions and conjectures which we believe make for interesting research.

\section{Preliminaries}\label{sec: prelim}

Let $\Bbbk = \overline{\Bbbk}$ be an algebraically closed field. Let $Q=(Q_0,Q_1,s,t)$ denote a \textbf{quiver} with vertex set $Q_0$, arrow set $Q_1$, and functions $s,t:Q_1 \rightarrow Q_0$ that assign to each arrow a starting and terminal point respectively. We denote by $\Bbbk Q$ the path algebra of $Q$. A \textbf{path of length} $m \geq1$ in $Q$ is a finite sequence of arrows $\alpha_1\alpha_2\dots\alpha_m$ where $t(\alpha_j) = s(\alpha_{j+1})$ for all $1 \leq j \leq m-1$. A \textbf{monomial relation} in $Q$ is given by a path of the form $\alpha_1\alpha_2\dots\alpha_m$ where $m\geq 2$. A two sided ideal $I$ of $\Bbbk Q$ is called \textbf{admissible} if $R^m_Q \subset I \subset R^2_Q$ for some $m \geq 2$ where $R_Q$ is the arrow ideal in $\Bbbk Q$. For an admissible ideal $I$, we denote by mod$\Bbbk Q/I \cong \text{rep} (Q,I)$ the category of finitely generated right $\Bbbk Q/I$-modules, or equivalently, the category of finite dimensional representations of the quiver $Q$ that satisfy the relations $I$. 

To an arbitrary finite quiver $Q$, we can equip an equivalence relation $\sim \, := (\sim_0, \sim_1)$, where $\sim_i$ is an equivalence relation on $Q_i$ for $i \in \{0,1\}$ and if $\alpha \sim_1 \beta$, then $s(\alpha) \sim_0 s(\beta)$ and $t(\alpha) \sim_0 t(\beta)$. We define the \textbf{quotient quiver} of $Q$ by $\sim$ as the quiver $Q/\sim \, := (Q_0/\sim_0, Q_1/\sim_1, s',t')$ where $s'([\alpha]) = [s(\alpha)]$ and $t'([\alpha]) = [t(\alpha)]$ for all $\alpha \in Q_1$. Note that there is a morphism of quivers $\pi: Q \rightarrow Q/\sim$ given by $\pi = (\pi_0,\pi_1)$ where $\pi_0$ ($\pi_1$) is the quotient map on the vertices (arrows) with respect to $\sim_0$ ($\sim_1$). The set of relations $R$ on $\pi(Q) = Q/\sim$ is generated by linear combinations of paths in $Q/\sim$ whose preimages under $\pi$ are zero. {An example of a quotient quiver can be found in Example \ref{exmp: H-E quiver as a quotient}.}

\begin{rem}

Throughout the paper, unless otherwise stated, all arbitrary quivers $Q$ are taken to be finite and acyclic so that $\Bbbk Q$ is a hereditary algebra (submodules of projective modules are projective).

\end{rem}

In the case of hereditary algebras, we have the following lemma, which we will refer to as the Happel--Ringel lemma:

\begin{lem}[Lemma 4.1 in \cite{happel1982tilted}]\label{lem: Happel-Ringel Lemma}
    Let $\Bbbk Q$ be hereditary. If $T_1$ and $T_2$ are indecomposable with Ext$(T_1,T_2) = 0$, then any nonzero morphism $T_2 \rightarrow T_1$ is either a monomorphism or an epimorphism. In particular, if $T_1$ is indecomposable with Ext$(T_1,T_1) = 0$, then Hom$(T_1,T_1)$ is a division ring. 
\end{lem}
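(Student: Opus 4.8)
The plan is to reduce the ``in particular'' clause to the main statement, and to prove the main statement by a Fitting-type argument that exploits the fact that the endomorphism ring of a finite-length indecomposable is local. For the main statement, let $f\colon T_2\to T_1$ be nonzero and factor it through its image as $T_2\xrightarrow{p}I\xrightarrow{j}T_1$, with $p$ epi and $j$ mono. This produces two short exact sequences, $(\mathrm a)\colon 0\to I\xrightarrow{j}T_1\xrightarrow{\pi}C\to 0$ and $(\mathrm b)\colon 0\to K\to T_2\xrightarrow{p}I\to 0$, where $C=\operatorname{coker}f$ and $K=\ker f$. Arguing by contradiction, I would assume $f$ is neither mono nor epi, which says exactly that $K\neq 0$ and $C\neq 0$. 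Since $T_1$ is indecomposable while $I\neq 0$ (as $f\neq 0$) and $C\neq 0$, the sequence $(\mathrm a)$ is non-split, i.e.\ its class $[\mathrm a]\in\operatorname{Ext}^1(C,I)$ is nonzero.

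The first key step is to propagate the hypothesis $\operatorname{Ext}^1(T_1,T_2)=0$ using that $\Bbbk Q$ is hereditary, so that all $\operatorname{Ext}^2$ groups vanish. Applying $\operatorname{Hom}(C,-)$ to $(\mathrm b)$ and $\operatorname{Hom}(-,T_2)$ to $(\mathrm a)$, the vanishing of the relevant $\operatorname{Ext}^2$ term and of $\operatorname{Ext}^1(T_1,T_2)$ forces the two connecting maps $p_*\colon\operatorname{Ext}^1(C,T_2)\twoheadrightarrow\operatorname{Ext}^1(C,I)$ and $\partial\colon\operatorname{Hom}(I,T_2)\twoheadrightarrow\operatorname{Ext}^1(C,T_2)$ to be surjective, where $\partial(\phi)=\phi_*[\mathrm a]$. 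Chasing $[\mathrm a]$ back through these two surjections produces a map $\phi\colon I\to T_2$ with $(p\phi)_*[\mathrm a]=[\mathrm a]$, equivalently $\bigl(\operatorname{id}_I-p\phi\bigr)_*[\mathrm a]=0$.

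The heart of the argument, and the step I expect to be the main obstacle, is converting this relation into an actual splitting: naive Ext-vanishing does not suffice, because $[\mathrm a]$ lives in $\operatorname{Ext}^1(C,I)$ rather than in the group $\operatorname{Ext}^1(T_1,I)$ that the hypothesis most directly controls. I would resolve this with a local-ring argument. Since $[\mathrm a]\neq 0$, the endomorphism $\operatorname{id}_I-p\phi$ of $I$ cannot be invertible. By the standard identity that $\operatorname{id}-\phi p$ is invertible if and only if $\operatorname{id}-p\phi$ is, the endomorphism $\operatorname{id}_{T_2}-\phi p$ of $T_2$ is not invertible either. As $T_2$ is indecomposable of finite length, $\operatorname{End}(T_2)$ is local, so $\phi p$ must then be invertible; hence $p$ is a split monomorphism, and being also epi it is an isomorphism, forcing $K=0$ --- a contradiction. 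Therefore $f$ is a monomorphism or an epimorphism.

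Finally, the ``in particular'' clause follows immediately: taking $T_2=T_1$, every nonzero endomorphism of $T_1$ is mono or epi by the main statement, hence an isomorphism, since an indecomposable finitely generated module over the finite-dimensional algebra $\Bbbk Q$ has finite length and Fitting's lemma equates injectivity, surjectivity, and bijectivity for its endomorphisms. Thus $\operatorname{Hom}(T_1,T_1)=\operatorname{End}(T_1)$ is a division ring.
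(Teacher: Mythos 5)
The paper does not prove this lemma; it is quoted directly from Happel--Ringel (Lemma 4.1 of \emph{Tilted algebras}), so there is no internal proof to compare against. Your argument is correct and is essentially the classical one: the two surjections $\operatorname{Hom}(I,T_2)\twoheadrightarrow\operatorname{Ext}^1(C,T_2)\twoheadrightarrow\operatorname{Ext}^1(C,I)$ coming from $\operatorname{Ext}^1(T_1,T_2)=0$ and heredity produce $\phi$ with $(\mathrm{id}_I-p\phi)_*[\mathrm a]=0$, and your finish via the $1-ab$ versus $1-ba$ identity and locality of $\operatorname{End}(T_2)$ is a clean, standard way to extract the contradiction (it neatly sidesteps the fact that $\operatorname{End}(I)$ itself need not be local). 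The reduction of the ``in particular'' clause is also fine.
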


The Happel--Ringel lemma allows us to make the following definition. If $E$ is an indecomposable $\Bbbk Q$ module that does not self-extend (Ext$(E,E)= 0)$), we call $E$ an \textbf{exceptional} $\Bbbk Q$ module. By an \textbf{exceptional sequence}, we mean an ordered sequence $(E_1, E_2, \dots, E_k)$ of exceptional $\Bbbk Q$ modules where Ext$(E_i,E_j) = 0 =  \text{Hom}(E_i,E_j)$ for all $j < i$. By an \textbf{exceptional collection (set)} of $\Bbbk Q$ modules, we mean a set $\{E_1, E_2, \dots, E_k\}$ that can be ordered into an exceptional sequence in at least one way. We call an exceptional sequence/collection \textbf{complete} if $k = |Q_0|$. One of the many important properties of exceptional sequences is that given a hereditary algebra $\Bbbk Q$, the braid group acts transitively on the set of exceptional sequences of $\Bbbk Q$ modules \cite{crawley1992exceptional}, \cite{ringel1994braid}. To prove this, the following {uniqueness} lemma is useful{, and will be {\color{black} referenced} throughout the paper.}

\begin{lem}[\cite{crawley1992exceptional}, \cite{ringel1994braid}] \label{lem: uniqueness in exceptional sequence}
    Let $Q$ be an acyclic quiver with $n$ vertices and let $(E_1, E_2, \dots, $ $E_{i-1}, E_{i+1}, \dots, E_n)$ be an exceptional sequence of $\Bbbk Q$ modules. Then there is a unique module $E_i$ such that $(E_1, E_2, \dots, E_{i-1}, E_i, E_{i+1}, \dots, E_n)$ is a complete exceptional sequence for all $i \in \{1,2,\dots,n\}$.
\end{lem}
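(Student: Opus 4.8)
The plan is to realize the completing module $E_i$ as the unique exceptional object of a perpendicular category, and to deduce both existence and uniqueness from the fact that this category has ``rank one.'' Write $L=\{E_1,\dots,E_{i-1}\}$ and $R=\{E_{i+1},\dots,E_n\}$, and consider the two-sided perpendicular category
$$\mathcal{C} \;=\; {}^{\perp}L \,\cap\, R^{\perp},$$
consisting of those $X$ with $\operatorname{Hom}(X,E_a)=\operatorname{Ext}(X,E_a)=0$ for all $a<i$ and $\operatorname{Hom}(E_b,X)=\operatorname{Ext}(E_b,X)=0$ for all $b>i$. By the conventions for exceptional sequences, a module inserted at position $i$ completes $(E_1,\dots,E_{i-1},E_{i+1},\dots,E_n)$ to a complete exceptional sequence precisely when it is an exceptional object of $\operatorname{mod}\Bbbk Q$ lying in $\mathcal{C}$. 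Thus the whole statement reduces to showing that $\mathcal{C}$ contains exactly one exceptional object up to isomorphism.

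The key structural input is the theory of perpendicular categories (Schofield; Geigle--Lenzing): the left or right perpendicular category of an exceptional object in a hereditary category is again the module category of a hereditary algebra, with the number of simples dropping by one. Iterating over the $n-1$ terms of $L\cup R$, one obtains $\mathcal{C}\simeq\operatorname{mod}\Lambda$ for a hereditary algebra $\Lambda$ with $n-(n-1)=1$ simple, i.e.\ $\mathcal{C}\simeq\operatorname{mod}\Bbbk$. This equivalence is exact and $\mathcal{C}$ is closed under extensions in $\operatorname{mod}\Bbbk Q$, so $\operatorname{Hom}$ and $\operatorname{Ext}$ computed in $\mathcal{C}$ agree with those in the ambient category. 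Since $\operatorname{mod}\Bbbk$ has a unique indecomposable object, which is exceptional, $\mathcal{C}$ has a unique exceptional object $E_i$; by construction the resulting length-$n$ sequence is a complete exceptional sequence, giving existence.

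For uniqueness, any module completing the sequence at position $i$ satisfies exactly the vanishing conditions defining $\mathcal{C}$ and is exceptional, hence is the unique indecomposable of $\mathcal{C}\simeq\operatorname{mod}\Bbbk$, so $E_i$ is determined up to isomorphism. As an independent check at the level of the Grothendieck group, the conditions $\langle[E_i],[E_a]\rangle=0$ for $a<i$ and $\langle[E_b],[E_i]\rangle=0$ for $b>i$ impose $n-1$ linear constraints on $[E_i]\in K_0(\Bbbk Q)\cong\mathbb{Z}^n$; since the Euler form of an acyclic quiver is non-degenerate, they cut out a rank-one subgroup, so the dimension vector of $E_i$ is forced. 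The main obstacle is the structural input itself, namely establishing that the two-sided perpendicular category is equivalent to $\operatorname{mod}$ of a hereditary algebra of the expected rank and that $\operatorname{Hom}$ and $\operatorname{Ext}$ are computed there as in $\operatorname{mod}\Bbbk Q$; once this machinery is in place the lemma is immediate. Alternatively one could use the transitive braid-group action to move the gap to an end and argue inductively, but the perpendicular-category route yields existence and uniqueness simultaneously and most transparently.
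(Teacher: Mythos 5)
The paper does not prove this lemma: it is quoted from \cite{crawley1992exceptional} and \cite{ringel1994braid} and used as a black box, so there is no internal proof to compare against. Your perpendicular-category argument is essentially the standard proof from those references, and it is correct in outline. Two points deserve a little more care than your write-up gives them. First, the reduction of the two-sided category ${}^{\perp}L\cap R^{\perp}$ to $\operatorname{mod}$ of a rank-one hereditary algebra is not a single application of the Schofield--Geigle--Lenzing theorem but a two-stage iteration: one first passes to $R^{\perp}\simeq\operatorname{mod}\Lambda'$ with $\Lambda'$ hereditary having $i$ simples, checks that $E_1,\dots,E_{i-1}$ all lie in $R^{\perp}$ (they do, by the definition of an exceptional sequence with the paper's convention $\operatorname{Hom}(E_i,E_j)=\operatorname{Ext}(E_i,E_j)=0$ for $j<i$) and still form an exceptional sequence there because $\operatorname{Hom}$ and $\operatorname{Ext}^1$ are computed the same way in the perpendicular subcategory, and only then takes ${}^{\perp}\{E_1,\dots,E_{i-1}\}$ inside $\Lambda'$-mod to land in a hereditary category with one simple. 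Second, a hereditary algebra with one simple is a priori a division ring; it is the standing hypothesis $\Bbbk=\overline{\Bbbk}$ that lets you conclude $\mathcal{C}\simeq\operatorname{mod}\Bbbk$, and in any case the unique simple of $\mathcal{C}$ is the unique exceptional object, which is all you need. Your Grothendieck-group remark is a reasonable sanity check but, as you present it, only pins down the class $[E_i]$ up to sign in a rank-one sublattice; it does not by itself replace the categorical uniqueness argument, so it is right that you keep it as an aside.
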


\subsection{Quivers of type $\tilde{\mathbb{A}}$}

Throughout this paper, we will be referring frequently to quivers of type $\tilde{\mathbb{A}}$. As a result, we briefly recall some important facts and establish notation. To make an extended Dynkin graph of type $\tilde{\mathbb{A}}_{n-1}$ a quiver, we define an \textbf{orientation vector} $\bm{\varepsilon} = (\varepsilon_1, \dots , \varepsilon_{n}) \in \{-,+\}^{n}$. Then \textbf{the quiver of type $\tilde{\mathbb{A}}_{n-1}$} with this orientation, denoted by $Q^{\bm{\varepsilon}} = (Q_0^{\bm{\varepsilon}},Q_1^{\bm{\varepsilon}},s,t)$, is the one such that $Q_0^{\bm{\varepsilon}} = \{1,2,\dots,n-1,n\}$ and for each $\alpha_i \in Q_1^{\bm{\varepsilon}}$ with $1\leq i \leq n$, we define $\alpha_i \in Q^{\varepsilon}_1$ as

\vspace{-.7cm}

\begin{center}
\begin{multicols}{2}

 \begin{displaymath}
   \alpha_i = \left\{
     \begin{array}{lr}
       i \rightarrow i+1 & : \varepsilon_i = +\\
       i \leftarrow i+1 & :  \varepsilon_i = -
     \end{array}
   \right.
\end{displaymath}

\columnbreak

 \begin{displaymath}
   \alpha_n = \left\{
     \begin{array}{lr}
       n \rightarrow 1 & : \varepsilon_n = +\\
       n \leftarrow 1 & :  \varepsilon_n = -
     \end{array}
   \right.
\end{displaymath}

\end{multicols}
\end{center}

So long as $\varepsilon_i \neq \varepsilon_j$ for some $i$ and $j$, which is a convention we take throughout the paper, the path algebras of these quivers are both hereditary and tame. 
By \textbf{tame}, we mean there are infinitely many indecomposable $\Bbbk Q$-modules and for all $n\in\mathbb{N}$, all but finitely many isomorphism classes of $n$-dimensional indecomposables occur in a finite number of one-parameter families. It is known that the module category, hence the Auslander--Reiten quiver, denoted by $\Gamma_{\Bbbk Q}$, of a tame hereditary algebra can be partitioned into three sections: {the preprojective component $\mathcal{P}$, the regular component $\mathcal{R}$, and the preinjective component $\mathcal{I}$.} For $\mathbb{\tilde{A}}$ quivers, the regular component consists of the left, right and homogeneous tubes. We will denote by $P_i, I_i,$ and $S_i$ the indecomposable projective, injective and simple representation at vertex $i$ respectively. For more on representation theory of tame algebras and definitions of these components see \cite{bluebook2}. 

The path algebras of quivers of type $\tilde{\mathbb{A}}$ have an additional structure that simplifies the aforementioned tripartite classification of $\Gamma_{\Bbbk Q}$; namely, they are gentile algebras. 

\begin{defn}
For an admissible ideal $I$, the algebra $B = \Bbbk Q/ I$ is a \textbf{string algebra} if
\begin{enumerate}
\item At each vertex of $Q$, there are at most two incoming arrows and at most two outgoing arrows.
\item For each arrow $\beta$ there is at most one arrow $\alpha$ and at most one arrow $\gamma$ such that $\alpha\beta \notin I$ and $\beta\gamma \notin I$.  \\

If moreover, we have the following two conditions, the string algebra $B$ is called \textbf{gentle}.\\

\item The ideal $I$ is generated by a set of monomials of length two.
\item For every arrow $\alpha$, there is at most one arrow $\beta$ and one arrow $\gamma$ such that $0 \neq \alpha\beta \in I$ and {\color{black} $0 \neq \gamma\alpha \in I$}.
\end{enumerate}
\end{defn}

It is well known that the indecomposable modules over string algebras are either string or band modules \cite{BR}. For $\Bbbk Q/I$ a sting algebra, to define string modules, we first define for $\alpha\in Q_1$ a \textbf{formal inverse} $\alpha^{-1}$, such that $s(\alpha^{-1}) = t(\alpha)$ and $t(\alpha^{-1}) = s(\alpha)$. Let $Q_1^{-1}$ denote the set of formal inverses of arrows in $Q_1$. We call arrows in $Q_1$ \textbf{direct} arrows and those in $Q_1^{-1}$ \textbf{inverse} arrows. We define a \textbf{walk} as a sequence $\omega = \omega_0\dots \omega_r$ such that for all $i\in\{0,1,\dots,r-1\}$, we have $t(\omega_i) = s(\omega_{i+1})$ where $\omega_i \in Q_1 \cup Q_1^{-1}$. A \textbf{string} is a walk $\omega$ with no sub-walk $\alpha\alpha^{-1}$ or $\alpha^{-1}\alpha$. A \textbf{band} $\beta = \beta_1\dots \beta_n$ is a cyclic string, that is, $t(\beta_n) = s(\beta_1)$, such that the $n$-fold concatenation $\beta^n$ is a string, but $\beta$ itself is not a proper power of any string. We define the \textbf{start or beginning} of a string $S = \omega_0\dots \omega_r$, denoted by $s(S)$, as $s(\omega_0)$. Similarly, we define the \textbf{end} of a string $S$, denoted by $t(S)$, as $t(\omega_r)$. For quivers of type $\mathbb{\tilde{A}}$ the band modules lie in the homogeneous tubes and we can classify in which component of the Auslander--Reiten quiver the string modules reside by their shape, as we will soon see. For quivers of type $\tilde{\mathbb{A}}$, we take the convention that all named strings move in the counter-clockwise direction around the quiver $Q$. 

For $l \in \mathbb{N}$ and $i,j\in[n]$, define the string module $(i,j;l)$ to be the string module associated to the reduced walk $e_{i+1}(\alpha_{i+1} \dots \alpha_i)^l\alpha_{i+1}\dots\alpha_{j-1}e_j$ where $\alpha_k \in Q_1 \cup Q_1^{-1}$ for all $k$ and we take the convention that $i+1 = 1$ if $i = n$. The simple module at vertex $j$ is denoted by $(j-1,j;0)$ and is associated to the walk $e_j$ where $e_j$ is the lazy path at vertex $j$. Notice that all strings in $Q$ can be uniquely written in this form. We take the convention that when we write strings in this notation, we take $l$ to be maximal.

{\color{black} 
\begin{exmp}\label{exam: example of string module}
    Consider the quiver $Q^{\bm{\varepsilon}}$ with $\bm{\varepsilon} = (+,-,-)$: 

\[
\xymatrix{
1 \ar_{\alpha_1}[r] \ar@/^1pc/^{\alpha_3}[rr] & 2 & 3 \ar^{\alpha_2}[l]
	}
\]

Below is a table listing some strings, the corresponding graph, our notation for the corresponding string module, and the representation corresponding to said string module.\\

\begin{center}
\begin{tabular}{|c|c|c|c|}
\hline
    String & Graph of String & String Module Notation & Corresponding Representation \\
\hline

$e_2$ & 2 & $(1,2;0)$ & \xymatrix{
0 \ar_{0}[r] \ar@/^1pc/^{0}[rr] & \Bbbk & 0 \ar^{0}[l]
	} \\
    \hline

$e_1(\alpha_1\alpha_2^{-1}\alpha_3^{-1})^1e_1$ & \resizebox{2.5cm}{.15cm}{\xymatrix{1 \ar[ddr] & & & 1 \ar[dl] \\ & & 3 \ar[dl] & \\ & 2 & &
	}} & $(3,1;1)$ & \xymatrix{
\Bbbk^2 \ar_{\begin{bmatrix} 1 & 0 \end{bmatrix}}[r] \ar@/^1pc/^{\begin{bmatrix} 0 & 1 \end{bmatrix}}[rr] & \Bbbk & \Bbbk \ar^{1}[l]
	}  \\
    \hline

$e_2(\alpha_2^{-1}\alpha_3^{-1}\alpha_1)^0\alpha_2^{-1}e_3$ & \xymatrix{ & 3 \ar[dl]\\ 2 & } & $(1,3;0)$ & \xymatrix{
0 \ar_{0}[r] \ar@/^1pc/^{0}[rr] & \Bbbk & \Bbbk \ar^{1}[l]
	} \\
    \hline
\end{tabular}
\end{center} 

One more example can be attained by `lengthening' the string module $(1,3;0)$ to $(1,3;2)$. In this case, the string would be $e_2(\alpha_2^{-1}\alpha_3^{-1}\alpha_1)^2\alpha_2^{-1}e_3$ and the corresponding graph is 

\[
\xymatrix{ & & 1 \ar[dl] \ar[ddr] & & & 1 \ar[dl] \ar[ddr] & & \\ & 3 \ar[dl] & & & 3 \ar[dl] & & & 3 \ar[dl] \\ 2 & & & 2 & & & 2 &  
	}
\]

A representative of the isoclass of representations corresponding to this string module is 

\[
\xymatrix{
\Bbbk^2 \ar_{\begin{bmatrix}1 & 0 & 0\\0 & 1 & 0\end{bmatrix}}[rr] \ar@/^1pc/^{\begin{bmatrix}0 & 1 & 0\\0 & 0 & 1\end{bmatrix}}[rrrr] & & \Bbbk^3 & & \Bbbk^3 \ar^{\begin{bmatrix}1 & 0 & 0\\0 & 1 & 0 \\ 0 & 0 & 1\end{bmatrix}}[ll]
	}
\]

\end{exmp}

}

String modules are useful, since their combinatorial structure can be used to describe both the AR translate $\tau$ \cite{BR}, and morphisms/extensions between string modules. To provide {\color{black} the description of morphisms in type $\tilde{\mathbb{A}}$}, we need some definitions. The following lemma follows from the definition of a string algebra, {and allows us to make the next definition.}

\begin{lem}
Let $S$ be a string of positive length and let $\varepsilon \in \{ Q, Q^{-1} \}$. There is at most one way to add an arrow preceding $s(S)$ whose orientation agrees with $\varepsilon$, such that the resulting walk is still a string. Similarly there is at most one way to add such an arrow following $t(S)$.
\end{lem}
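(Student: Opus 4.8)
The plan is to reformulate the operation as prepending a single letter to $S$, reduce the string condition to a purely local condition at the first letter $\omega_0$ of $S$, and then dispatch by a short case analysis using the two defining axioms of a string algebra.

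First I would write $S = \omega_0\omega_1\cdots\omega_r$, so that $s(S) = s(\omega_0)$. To ``add an arrow preceding $s(S)$'' means to prepend a single letter $\gamma \in Q_1\cup Q_1^{-1}$, forming the walk $\gamma S = \gamma\omega_0\cdots\omega_r$; being a walk requires $t(\gamma) = s(\omega_0)$, and agreeing with $\varepsilon$ requires $\gamma \in Q_1$ when $\varepsilon = Q$ and $\gamma \in Q_1^{-1}$ when $\varepsilon = Q^{-1}$. Since $S$ is already a string, any newly prohibited configuration in $\gamma S$ must have $\gamma$ as its leftmost letter: a new subwalk $\alpha\alpha^{-1}$ or $\alpha^{-1}\alpha$ can only be $\gamma\omega_0$, and any new subpath lying in $I$ is a directed path beginning with $\gamma$, hence beginning with $\gamma\omega_0$. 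Consequently a necessary condition for $\gamma S$ to be a string is that the length-two walk $\gamma\omega_0$ already be a string, and it suffices to show that, once the orientation type of $\gamma$ is fixed by $\varepsilon$, at most one $\gamma$ achieves this.

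Then I would run the case analysis on the orientation of $\gamma$ (fixed by $\varepsilon$) against the orientation of the fixed letter $\omega_0$; exactly one of four configurations applies. When $\gamma$ and $\omega_0$ are \emph{aligned} — both direct, so $\gamma\omega_0$ is a directed path, or both inverse, say $\gamma = \eta^{-1}$ and $\omega_0 = \delta^{-1}$, so $\gamma\omega_0 = (\delta\eta)^{-1}$ is the inverse of the directed path $\delta\eta$ — reducedness is automatic and the only constraint is that the underlying path avoid $I$, namely $\gamma\omega_0\notin I$ in the first case and $\delta\eta\notin I$ in the second. In either case axiom (2) of the definition of a string algebra (each arrow has at most one arrow that may legally precede it, and at most one that may follow it, without landing in $I$) forces at most one admissible choice. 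When $\gamma$ and $\omega_0$ are \emph{opposed} — a peak $\gamma\delta^{-1}$ with $\gamma,\delta$ both direct arrows into $v := s(\omega_0)$, or a valley $\eta^{-1}\omega_0$ with $\eta,\omega_0$ both direct arrows out of $v$ — no directed subpath is created, so no relation of $I$ is at issue and the only constraint is reducedness, i.e. $\gamma \neq \omega_0^{-1}$. By axiom (1) there are at most two arrows of the relevant kind (incoming, resp. outgoing) at $v$; one of them (namely $\delta$, resp. $\omega_0$) is excluded by reducedness, leaving at most one admissible $\gamma$.

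Finally, the statement for adding an arrow following $t(S)$ is the mirror image: one appends $\gamma$ to the last letter $\omega_r$ and repeats the identical four-way analysis with ``incoming/outgoing'' and the two clauses of axiom (2) interchanged. I expect no genuine obstacle; the only thing requiring care is the bookkeeping of orientation conventions (direct versus inverse, peak versus valley) so that the correct clause of axiom (1) or (2) is invoked in each configuration. I would also emphasize that we only ever need \emph{at most one}, so the argument is unaffected should $I$ contain longer monomial relations: it is enough that being a string forces the displayed length-two condition as a \emph{necessary} condition, and that condition already admits at most one solution.
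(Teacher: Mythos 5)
Your proof is correct. The paper itself offers no argument for this lemma --- it is stated with the remark that it ``follows from the definition of a string algebra'' --- so your four-way case analysis (aligned versus opposed orientation of the prepended letter against $\omega_0$, invoking axiom (2) in the aligned cases and axiom (1) plus reducedness in the peak/valley cases) is exactly the justification the paper leaves implicit, and your reduction of the string condition on $\gamma S$ to a local condition on $\gamma\omega_0$ is sound: any new forbidden configuration must involve the new leftmost letter. Your closing remark that only the \emph{necessity} of the length-two condition is needed, so longer monomial relations in $I$ cause no trouble, is the right observation and closes the one loophole a careless version of this argument would leave open. One caveat worth flagging: the paper's stated definition of a string requires only that the walk be reduced (no subwalk $\alpha\alpha^{-1}$ or $\alpha^{-1}\alpha$) and omits the standard clause that no directed subpath (or inverse thereof) lies in $I$; under that literal definition your aligned case --- and indeed the lemma itself --- would fail, since axiom (1) permits two incoming arrows at $s(\omega_0)$ and reducedness alone excludes neither. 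You are implicitly using the standard Butler--Ringel definition, which is clearly what the authors intend (and which is vacuous in their hereditary type $\tilde{\mathbb{A}}$ setting, where $I=0$), but it would be worth stating that you are supplying this clause.
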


\begin{defn} {\color{white} .}

    \begin{itemize}
        \item If a string $S$ can be extended at its end by a direct arrow, then we can \textbf{add a cohook} at $t(S)$, by which we mean, add a direct arrow at $t(S)$, followed by adding as many inverse arrows as possible. The inverse operation is called \textbf{deleting a cohook} at $t(S)$.

        \item If a string $S$ can be extended by an inverse arrow at $s(S)$, then we can \textbf{add a cohook} at $s(S)$, by which we mean, add an inverse arrow at $s(S)$, followed by as many direct arrows as possible. The inverse operation is called \textbf{deleting a cohook} at $s(S)$.

        \item If a string $S$ can be extended at its end by adding an inverse arrow, by \textbf{adding a hook} at $t(S)$, we mean adding this inverse arrow to $S$ along with as many direct arrows as possible. The inverse operation is called \textbf{deleting a hook} at $t(S)$.

        \item If a string $S$ can be extended at its start by a direct arrow, we can \textbf{add a hook} at $s(S)$, by which we mean, add the direct arrow at $s(S)$ followed by as many inverse arrows as possible. The inverse operation is called \textbf{deleting a hook} at $s(S)$.

    \end{itemize}
\end{defn}

{\color{black} 
\begin{exmp}\label{exmp: adding hooks}
    Consider the string module {\color{black}$(3,3;0)$} from {\color{black} the quiver in} Example \ref{exam: example of string module}. Then we can add a hook at the end of the string to obtain the string $(3,2;1)$ as indicated below:
\begin{center}
\begin{tabular}{c}
    \xymatrix{1 \ar[ddr] & & & & & & 1 \ar[ddr] & & & 1 \ar[dl] \ar[ddr] & &  \\ & & 3 \ar[dl] & \ar@/^.5pc/^{\text{add hook at end}}[rr] & & \ar@/^.5pc/^{\text{delete hook at end}}[ll] & & & 3 \ar[dl] & & & \\ & 2 & & & & & & 2 & & & 2} 
\end{tabular}	
\end{center}

We can also delete a hook at the start of the string to obtain the string $(2,3;0)$ as indicated below:

\begin{center}
\begin{tabular}{c}
    \xymatrix{1 \ar[ddr] & & & & & & &  \\ & & 3 \ar[dl] & \ar@/^.5pc/^{\text{delete cohook at start}}[rr] & & \ar@/^.5pc/^{\text{add cohook at end}}[ll] & 3  \\ & 2 & & & } 
\end{tabular}	
\end{center}
    
\end{exmp}
}

 Note that deleting a cohook at the end ({\color{black} resp.} start) of $S$ may not be defined if $S$ does not have any direct ({\color{black} resp.} inverse) arrows. Similarly, note that deleting a hook at the end ({\color{black} resp.} start) of $S$ may not be defined if $S$ does not have any inverse ({\color{black} resp.} direct) arrows. It is known that in type $\tilde{\mathbb{A}}$, all irreducible morphisms in the preprojective (preinjective) component are given by adding hooks (deleting cohooks).  

{\color{black}
A basis for the space of all morphisms between two strings was given by Crawley-Boevey in \cite{CBstrings}. Schr{\"o}er then reformulated the basis given by Crawley-Boevey in \cite{schroer1999modules}. This reformulation was then used in \cite{brustle2020combinatorics} to provide a basis for the vector space of extensions between two strings. Since we will be using the results of \cite{brustle2020combinatorics} throughout the paper, we will recite some of them here. The following definition follows Schr{\"o}er's reformulation in \cite{schroer1999modules}.

\begin{defn}
For $C$ a string in the quiver $Q$, the set of all factorizations of $C$ is $$\mathcal{P}(C) = \{(F,E,D) : F, E, \text{ and } D \text{ are strings of $Q$ and $C = FED$} \}.$$ A triple $(F,E,D) \in \mathcal{P}(C)$ is called a \textbf{quotient factorization} of $C$ if the following hold:
\begin{enumerate}
\item $D = e_{t(E)}$ or $D = \gamma D'$ with $\gamma \in Q_1$.
\item $F = e_{s(E)}$ or $F = F'\theta$ with $\theta \in Q_1^{-1}$.
\end{enumerate} 
We denote the set of all quotient factorizations of $C$ by $\mathcal{F}(C)$. A triple $(F,E,D) \in \mathcal{P}(C)$ is called a \textbf{submodule factorization} of $C$ if the following hold:
\begin{enumerate}
\item $D = e_{t(E)}$ or $D = \gamma D'$ with $\gamma \in Q_1^{-1}$.
\item $F = e_{s(E)}$ or $F = F'\theta$ with $\theta \in Q_1$.
\end{enumerate} 
We denote the set of all submodule factorizations of $C$ by $\mathcal{S}(C)$.
\end{defn}

A quotient factorization induces a module epimorphism from $C$ onto $E$ and dually, a submodule factorization induces a monomorphism from $E$ into $C$. 

\begin{defn}
For $C_1$ and $C_2$ strings in $Q$, a pair $((F_1,E_1,D_1),(F_2,E_2,D_2)) \in \mathcal{F}(C_1) \times \mathcal{S}(C_2)$ is called \textbf{admissible} if $E_1 = E_2$ or $E_1 = E_2^{-1}$.
\end{defn}

Each admissible pair $T = ((F_1,E_1,D_1),(F_2,E_2,D_2))$ provides a morphism $f_T:C_1 \rightarrow C_2$ achieved by projecting onto $E_1$ followed by identifying $E_1$ with $E_2$, then including $E_2$ into $C_2$. These morphisms will be called \textbf{graph maps} and they form a basis for the Hom space as shown by Crawley-Boevey in \cite{CBstrings}.

\begin{thm}\label{thm: basis for Hom}
If $A = \Bbbk Q /I$ is a string algebra and $C_1$ and $C_2$ are string modules, the set of graph maps is a basis for Hom$_A(C_1,C_2)$.
\end{thm}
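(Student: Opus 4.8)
The plan is to reduce the statement to a concrete linear-algebra problem and then split it into the two usual tasks: linear independence and spanning. First I would fix the standard linear model of a string module: for a string $C = c_1\cdots c_r$ with underlying walk through vertices $v_0, v_1, \dots, v_r$, the module $M(C)$ has a basis $\{b_0, \dots, b_r\}$ with $b_i$ sitting at $v_i$, and each arrow of $Q$ acts by sending $b_{i}$ to $b_{i\pm 1}$ when the corresponding letter of $C$ is that arrow (direct or inverse) and to $0$ otherwise. With this model a homomorphism $f: M(C_1) \to M(C_2)$ is exactly a matrix $(a_{ij})$ of scalars, with $b_i \mapsto \sum_j a_{ij} b'_j$, that intertwines the arrow actions; the content of the theorem is that these intertwining conditions cut out precisely the span of the graph maps, and that the graph maps are a basis for that span.

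Next I would record what each graph map does on this basis. For an admissible pair $T = ((F_1,E_1,D_1),(F_2,E_2,D_2))$, the quotient factorization $C_1 = F_1 E_1 D_1 \in \mathcal{F}(C_1)$ realizes $M(E_1)$ as a quotient of $M(C_1)$ by killing the basis vectors coming from $F_1$ and $D_1$, while the submodule factorization $C_2 = F_2 E_2 D_2 \in \mathcal{S}(C_2)$ realizes $M(E_2)$ as a submodule of $M(C_2)$; the graph map $f_T$ therefore identifies the $E_1$-block of basis vectors of $M(C_1)$ with the $E_2$-block of $M(C_2)$ and annihilates everything else. In particular the support of $f_T$, viewed as the set of positions $(i,j)$ where the matrix is the identity, is a single diagonal block indexed by the matched substring. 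For linear independence I would then argue by triangularity: order the admissible pairs by the position of one distinguished end of the matched substring (say the image of $s(E_1)$), note that distinct admissible pairs record distinct placements of the matched substring inside $C_1$ and $C_2$, and conclude that each $f_T$ carries a distinguished matrix entry not produced by any $f_{T'}$ earlier in the order, so that a relation $\sum_T \lambda_T f_T = 0$ forces every $\lambda_T = 0$ by peeling off leading terms. This step is essentially bookkeeping once the support description is in hand.

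The real work is spanning, and I expect it to be the main obstacle. Given an arbitrary intertwiner $f = (a_{ij})$, I would show that every nonzero entry $a_{ij}$ lies in a unique maximal matched substring forming an admissible pair $T$, and that subtracting $a_{ij}\, f_T$ strictly decreases the number of nonzero entries of $f$; induction then writes $f$ as a combination of graph maps. The heart of the matter is a propagation argument: starting from a nonzero $a_{ij}$, the requirement that $f$ commute with the arrow actions forces the neighboring entries $a_{i\pm 1,\, j\pm 1}$ to equal $a_{ij}$ as long as the two walks continue in compatible directions, and forces the entry to terminate exactly when the walk of $C_1$ turns in the quotient direction and the walk of $C_2$ turns in the submodule direction.

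It is precisely at this termination step that the string algebra axioms are indispensable: the conditions that each vertex has at most two incoming and two outgoing arrows, that continuation of a non-relation path is unique, and that $I$ is generated by monomials together guarantee that the local neighborhood of each basis vector is a single line rather than a branching configuration. This makes the propagation unambiguous and forces the terminating boundary conditions to coincide verbatim with the defining conditions of $\mathcal{F}(C_1)$ and $\mathcal{S}(C_2)$, so the extracted matched substring is genuinely an admissible pair. The delicate bookkeeping I would still need to carry out in full is verifying that the substring produced this way is maximal and that the subtraction of $a_{ij}\, f_T$ creates no new nonzero entries, so that the induction on the number of nonzero entries is well-founded.
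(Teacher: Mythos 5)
The paper offers no proof of this theorem to compare against: it is imported from Crawley-Boevey \cite{CBstrings}, with the formulation via admissible pairs due to Schr\"oer. Your outline reconstructs the standard published argument and is essentially sound: the explicit basis model of a string module, the observation that a graph map is a $0$--$1$ matrix supported on a single monotone diagonal segment, linear independence by a leading-entry argument (distinct admissible pairs have distinct segment supports, since a reduced string cannot equal its own inverse), and spanning by propagating a nonzero entry along both walks until the intertwining relations allow it to stop.

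Two points in the spanning step deserve more care than your sketch gives them. First, the propagation is only unambiguous because strings are reduced: for a fixed arrow $\alpha$ and a fixed basis vector $b'_k$ of $M(C_2)$ there is at most one $j$ with $\alpha\cdot b'_j=b'_k$, since two such neighbours of position $k$ would force the forbidden subwalk $\alpha\alpha^{-1}$. This is what makes each intertwining relation involve a single unknown and turns it into the clean alternatives $a_{i\pm1,\,j\pm1}=a_{ij}$ or $a_{ij}=0$. Second, your phrasing of the termination condition ("$C_1$ turns in the quotient direction and $C_2$ turns in the submodule direction") should be sharpened: maximality of the constant block at its right end forces exactly that $C_1$ does \emph{not} continue with an inverse letter there (else the entry propagates or vanishes) and that $C_2$ does \emph{not} continue with a direct letter there, which are verbatim the conditions "$D_1=e$ or $D_1=\gamma D_1'$ with $\gamma\in Q_1$" and "$D_2=e$ or $D_2=\gamma D_2'$ with $\gamma\in Q_1^{-1}$" defining $\mathcal F(C_1)$ and $\mathcal S(C_2)$, and dually at the left end. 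Once that four-case boundary analysis is written out, $f-a_{ij}f_T$ is supported strictly inside the old support and your induction closes; so the plan works, but the case analysis is the proof and must be done explicitly.
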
 

Using this result, the extension group between two strings in a gentle algebra was classified in \cite{brustle2020combinatorics}. To state this result, we need some more definitions.

\begin{defn}
A graph map $f_T$ given by an admissible pair $T = ((F_1,E_1,D_1),(F_2,E_2,D_2))$ is called \textbf{two-sided} if at least one of $D_1$ and $D_2$ has positive length, and the same holds for $F_1$ and $F_2$. We say that the string $C_1$ is \textbf{connectable} to the string $C_2$ if there exists an arrow $\alpha$ such that $C_1\alpha C_2$ is a string in $Q$, or dually that $C_2\alpha^{-1}C_1$ is a string in $Q$.
\end{defn}

Notice, if $C_1$ is connectable to $C_2$, we get a non-zero extension $$0 \rightarrow C_2 \rightarrow C_1\alpha C_2 \rightarrow C_1 \rightarrow 0.$$ Moreover, as shown in \cite{schroer1999modules}, two-sided graph maps $f_T\in\text{Hom}_A(C_2,C_1)$ coming from an admissible pair $T = ((F_2,E_2,D_2),(F_1,E_1,D_1))$ give rise to a non-zero extension $$0\rightarrow C_2 \rightarrow F_2ED_1 \oplus F_1ED_2 \rightarrow C_1 \rightarrow 0.$$ Not only this, in \cite{brustle2020combinatorics} it was shown that these extensions form a basis.

\begin{thm}\label{thm: basis for ext}
Let $\Bbbk Q/I$ be a gentle algebra. For strings $C_1$ and $C_2$ in $Q$, the set of extensions coming both from connections $\alpha$ of $C_1$ to $C_2$ and two-sided graph maps $F_T \in \text{Hom}(C_2,C_1)$ form a basis for Ext$^1_{A}(C_1,C_2)$.
\end{thm}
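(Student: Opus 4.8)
The plan is to exhibit the two listed families as linearly independent nonzero classes in $\text{Ext}^1_A(C_1,C_2)$ and then to match their total number against $\dim_\Bbbk\text{Ext}^1_A(C_1,C_2)$ computed by a different route; since a linearly independent set of the correct cardinality is automatically a basis, this suffices and I never have to prove spanning directly. The nonvanishing is essentially built into the constructions recorded just above the statement. A connection $\alpha$ of $C_1$ to $C_2$ gives $0\to C_2\to C_1\alpha C_2\to C_1\to 0$ whose middle term is the indecomposable string module $C_1\alpha C_2$ (a single string), hence is not $C_1\oplus C_2$ and the sequence is non-split. A two-sided graph map $f_T$, where $C_1=F_1ED_1$ and $C_2=F_2ED_2$, gives $0\to C_2\to F_2ED_1\oplus F_1ED_2\to C_1\to 0$, in which the middle term is obtained from $C_1,C_2$ by exchanging the tails $D_1\leftrightarrow D_2$; two-sidedness (positive length on one of $D_1,D_2$ and on one of $F_1,F_2$) forces this exchange to be nontrivial, so the middle term is again not $C_1\oplus C_2$. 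I would also record that distinct connections and distinct admissible pairs produce visibly distinct middle terms, which will feed into linear independence.

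To compute the dimension I would invoke Auslander--Reiten duality for the finite-dimensional algebra $A$,
\[
\text{Ext}^1_A(C_1,C_2)\;\cong\;D\,\overline{\text{Hom}}_A(C_2,\tau C_1),
\]
where $\overline{\text{Hom}}$ is Hom modulo maps factoring through an injective and $D$ is the $\Bbbk$-dual. First I would compute $\tau C_1$ explicitly as a string using the hook/cohook description of the Auslander--Reiten translate for string algebras from \cite{BR} (add a cohook at each end of $C_1$ where possible, delete a hook at the remaining ends). Crawley--Boevey's graph-map basis (Theorem \ref{thm: basis for Hom}) then furnishes a combinatorial basis of $\text{Hom}_A(C_2,\tau C_1)$, and over a gentle algebra the injectives are themselves string modules, so the graph maps factoring through injectives are exactly those that can be absorbed into the cohook/injective ends introduced by $\tau$. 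Passing to $\overline{\text{Hom}}$ deletes precisely these, and I expect the surviving graph maps to split into two kinds: the two-sided graph maps $C_2\to\tau C_1$, which are in bijection with the two-sided graph maps $f_T\in\text{Hom}(C_2,C_1)$ (the translate shifts endpoints but preserves the overlap data), and the remaining necessarily one-sided survivors, which are in bijection with the connecting arrows from $C_1$ to $C_2$. This indexing yields $\dim_\Bbbk\text{Ext}^1_A(C_1,C_2)=\#\{\text{connections}\}+\#\{\text{two-sided graph maps}\}$, exactly the number of listed extensions.

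The remaining task is linear independence, which I would argue directly rather than through the duality: given a nontrivial relation among the connection classes and two-sided-graph-map classes, I would restrict along a suitable quotient string of $C_1$ (or push out along a submodule of $C_2$) chosen at the arrow or overlap that distinguishes one class from the rest, thereby isolating a single nonzero class and contradicting the relation. The main obstacle I anticipate is the combinatorial bookkeeping in the dimension count: correctly determining which graph maps into $\tau C_1$ factor through injectives, and carrying out the endpoint case analysis for $\tau$ (strings of length zero, simple modules, and ends admitting neither a hook nor a cohook) so that the claimed two-kinds classification of survivors is genuinely exhaustive. The hypothesis that $A$ is gentle, namely that $I$ is generated by monomials of length two, enters precisely here, since it is what forces overlaps of strings to produce extensions of exactly the two stated shapes; for a general string algebra this clean dichotomy, and hence the bijection, would fail. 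Band and tube phenomena do not intervene, since $C_1$ and $C_2$ are assumed to be strings.
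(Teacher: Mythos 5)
This statement is not proved in the paper at all: it is quoted verbatim from \cite{brustle2020combinatorics} (together with Theorem \ref{thm: basis for Hom}, which supplies the graph-map basis of the Hom spaces), so there is no in-paper argument to measure your proposal against. Judged on its own terms, your outline --- exhibit the listed classes as nonzero, prove linear independence, and match cardinalities against $\dim\operatorname{Ext}^1_A(C_1,C_2)$ computed through Auslander--Reiten duality $\operatorname{Ext}^1_A(C_1,C_2)\cong D\,\overline{\operatorname{Hom}}_A(C_2,\tau C_1)$ and the Butler--Ringel hook/cohook description of $\tau$ --- is a legitimate and recognizably standard route to such a result, and you have correctly identified where the real work lives.

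The difficulty is that the step you defer is not bookkeeping but is essentially the theorem itself. The assertion that the graph maps $C_2\to\tau C_1$ surviving modulo injectives split cleanly into two-sided graph maps ``preserving the overlap data'' of maps $C_2\to C_1$ on one hand and one-sided survivors bijective with the connecting arrows on the other is stated, not argued; $\tau C_1$ differs from $C_1$ at both ends by cohooks or deleted hooks, an overlap of $C_2$ with $C_1$ need not persist as an overlap with $\tau C_1$ in any obvious way, and you give no reason why no third kind of survivor can occur. Until that trichotomy is established the dimension count is circular. Two smaller gaps: for an overlap extension, two-sidedness does not by itself prevent $F_2ED_1\oplus F_1ED_2\cong C_1\oplus C_2$ in degenerate cases (e.g.\ $D_1=D_2$ and $F_1=F_2$ up to inversion), so non-splitness needs either a separate argument or an appeal to the fact that over an Artin algebra a short exact sequence whose middle term is isomorphic to the direct sum of its ends is split; and ``visibly distinct middle terms'' is not a proof of linear independence of extension classes, since inequivalent extensions can share middle terms and linear combinations are not controlled by the middle terms of the summands --- your fallback of isolating classes by push-out and pull-back is the right idea but is again only a plan. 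As it stands the proposal is a credible strategy document rather than a proof.
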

}
\noindent
 For more on string/gentle algebras see \cite{StringAlgebraInfo} and \cite{CBstrings}. 

Let $w = w_0\cdots w_r$ be an indecomposable $\Bbbk \mathbb{\tilde{A}}$-module. It is well known that $w$ is \textbf{preprojective} if and only if there are arrows $\alpha,\beta\in Q_1$ such that $t(\alpha) = s(w_0)$ and $t(\beta) = t(w_r)$. Similarly, $w$ is \textbf{preinjective} if and only if there are arrows $\alpha,\beta\in Q_1$ such that $s(\alpha) = s(w_0)$ and $s(\beta) = t(w_r)$, $w$ is \textbf{left regular} if and only if there are arrows $\alpha,\beta\in Q_1$ such that $t(\alpha) = s(w_0)$ and $s(\beta) = t(w_r)$, $w$ is \textbf{right regular} if and only if there are arrows $\alpha,\beta\in Q_1$ such that $s(\alpha) = s(w_0)$ and $t(\beta) = t(w_r)$ and finally $w$ is \textbf{homogeneous} if and only if $w$ is a band. {\color{black} For instance, for the quiver in Example \ref{exam: example of string module}, the module $(3,1;1)$ is preinjective (it is $I_2$), the module $(1,3;2)$ is preprojective and not projective, the module $(1,1;0)$ is left regular, and the module $(3,3;0)$ in Example \ref{exmp: adding hooks} is right regular.}

\subsubsection{A geometric model for the category of representations of $\tilde{\mathbb{A}}$ quivers}

Given our quiver $Q^{\bm{\varepsilon}}$, we will now associate an annulus $A_{Q^{\bm{\varepsilon}}}$ as follows. If $\varepsilon_i = +(-)$, then $i$ is a marked point on the outer (inner) boundary of the annulus. We moreover write the marked points in clockwise order respecting the natural numerical order of the vertices in $Q^{\bm{\varepsilon}}_0$ {when $Q$ is traversed counterclockwise.}  

\begin{defn}
Let $i,j\in\{1, 2, \dots , n\}$ and $\lambda \in \mathbb{Z}$. An \textbf{arc} $a(i,j)[\lambda]$ on $A_{Q^{\bm{\varepsilon}}}$ is an isotopy class of simple curves in $A_{Q^{\bm{\varepsilon}}}$ where any $\gamma \in a(i,j)[\lambda]$ satisfies: 
\begin{enumerate}
\item $\gamma$ begins at $i$ and ends at $j$, 
\item $\gamma$ travels clockwise through the interior of the annulus from $i$ to $j$. 
\item If $\gamma$ begins and ends on the same boundary component, the integer $\lambda$ is the winding number of $\gamma$ about the inner boundary component. If $\gamma$ connects the two boundary components, $\lambda$ is the clockwise winding number of $\gamma$ about the inner boundary circle of $A_{Q^{\bm{\varepsilon}}}$ when traversing $\gamma$ beginning from the outer boundary component. 
\end{enumerate}
  
A collection of such arcs will be called an \textbf{arc diagram}. If an arc from $i$ to $j$ has counter clockwise winding number $k$, we write $a(i,j)[-k]$. By the \textbf{support} of the arc $a(i,j)[\lambda]$, we mean the marked points $\{i+1, i+2, \dots, j\}$ taking the convention that $n+1 := 1$. Arcs that begin at one boundary component and end at another will be called \textbf{bridging arcs}, while those that begin and end on the same boundary component will be called \textbf{exterior or boundary arcs}.  

\end{defn}

{\color{black} 

\begin{rem}\label{rem: Duality on Annulus}
    It is well known that there is a duality $D: \text{rep}_{\Bbbk}Q \rightarrow \text{rep}_{\Bbbk}Q^{op}$ given by $D = \text{Hom}_{\Bbbk}(-,\Bbbk)$. Throughout the paper, this is the duality to which we are referring. In type $\tilde{\mathbb{A}}$, we can visualize this duality geometrically by continuously interchanging the inner and outer boundary components of the corresponding annulus while preserving the orientation.
\end{rem}

}

We say that two arcs $a(i_1,j_1)[\lambda_1]$ and $a(i_2,j_2)[\lambda_2]$ \textbf{intersect nontrivially} if any two curves $\gamma_1 \in a(i_{1},j_{1})[\lambda_1]$ and $\gamma_2 \in a(i_2,j_2)[\lambda_2]$ intersect in their interiors. Otherwise we say that $a(i_1,j_1)[\lambda_1]$ and $a(i_2,j_2)[\lambda_2]$ \textbf{do not intersect nontrivially}. We say an arc $a(i,j)[\lambda]$ \textbf{self-intersects} if any two curves $\gamma_1 \in a(i,j)[\lambda]$ and $\gamma_2 \in a(i,j)[\lambda]$ intersect in their interiors. Otherwise, we say $a(i,j)[\lambda]$ \textbf{does not self-intersect}. If $a(i_1,j_1)[\lambda_1]$ and $a(i_2,j_2)[\lambda_2]$ do not intersect nontrivially, we say $a_1 = a(i_1,j_1)[\lambda_1]$ is \textbf{clockwise} from $a_2 = a(i_2,j_2)[\lambda_2]$ (or equivalently $a(i_2,j_2)[\lambda_2]$ is \textbf{counterclockwise} from $a(i_1,j_1)[\lambda_1]$) if and only if $a_1$ and $a_2$ appear locally in one of the six possible configurations in Figure \ref{fig: 6 possible local configurations on Annulus}. We say that a collection of arcs $\{a(i_1,j_1)[\lambda_1], a(i_2,j_2)[\lambda_2], \dots , a(i_k,j_k)[\lambda_k]\}$ form a \textbf{cycle} if and only if $a(i_l,j_l)[\lambda_l]$ is clockwise from $a(i_{l+1},j_{l+1})[\lambda_{l+1}]$ for all $l < k$ and $a(i_k,j_k)[\lambda_k]$ is clockwise from $a(i_1,j_1)[\lambda_1]$. {\color{black} We say $a(i,j)[\lambda]$ forms a \textbf{closed arc} if $i = j$ and a \textbf{loop} if moreover, it does not self-intersect. We call an isotopy class of curves $\gamma[\lambda]$ a \textbf{closed curve} with winding number $\lambda$ if any $\delta\in\gamma[\lambda]$ encloses the inner boundary component of the annulus, does not intersect either boundary component of the annulus, and has winding number $\lambda$. We call a closed curve with winding number 1 a \textbf{simple closed curve}. Examples of closed curves can be found in Example \ref{exmp: closed curves}.} 

\begin{figure}
\begin{center}

\tikzset{every picture/.style={line width=0.75pt}} 

\begin{tikzpicture}[x=0.75pt,y=0.75pt,yscale=-1,xscale=1]

\draw   (90.54,86.29) .. controls (90.54,75.05) and (100.43,65.93) .. (112.63,65.93) .. controls (124.83,65.93) and (134.72,75.05) .. (134.72,86.29) .. controls (134.72,97.54) and (124.83,106.65) .. (112.63,106.65) .. controls (100.43,106.65) and (90.54,97.54) .. (90.54,86.29)(60,86.29) .. controls (60,58.18) and (83.56,35.39) .. (112.63,35.39) .. controls (141.69,35.39) and (165.26,58.18) .. (165.26,86.29) .. controls (165.26,114.4) and (141.69,137.19) .. (112.63,137.19) .. controls (83.56,137.19) and (60,114.4) .. (60,86.29) ;
\draw    (165.26,86.29) ;
\draw [shift={(165.26,86.29)}, rotate = 0] [color={rgb, 255:red, 0; green, 0; blue, 0 }  ][fill={rgb, 255:red, 0; green, 0; blue, 0 }  ][line width=0.75]      (0, 0) circle [x radius= 3.35, y radius= 3.35]   ;
\draw   (93.01,228.49) .. controls (93.01,217.25) and (102.9,208.13) .. (115.1,208.13) .. controls (127.3,208.13) and (137.19,217.25) .. (137.19,228.49) .. controls (137.19,239.74) and (127.3,248.85) .. (115.1,248.85) .. controls (102.9,248.85) and (93.01,239.74) .. (93.01,228.49)(62.47,228.49) .. controls (62.47,200.38) and (86.03,177.6) .. (115.1,177.6) .. controls (144.16,177.6) and (167.72,200.38) .. (167.72,228.49) .. controls (167.72,256.6) and (144.16,279.39) .. (115.1,279.39) .. controls (86.03,279.39) and (62.47,256.6) .. (62.47,228.49) ;
\draw    (115.1,208.13) ;
\draw [shift={(115.1,208.13)}, rotate = 0] [color={rgb, 255:red, 0; green, 0; blue, 0 }  ][fill={rgb, 255:red, 0; green, 0; blue, 0 }  ][line width=0.75]      (0, 0) circle [x radius= 3.35, y radius= 3.35]   ;
\draw    (150.46,66.58) -- (165.26,86.29) ;
\draw    (165.26,86.29) -- (150.46,102.75) ;
\draw    (94.96,201.3) -- (115.1,208.13) ;
\draw    (115.1,208.13) -- (134.43,201.3) ;
\draw   (258.26,86.29) .. controls (258.26,75.05) and (268.15,65.93) .. (280.35,65.93) .. controls (292.55,65.93) and (302.44,75.05) .. (302.44,86.29) .. controls (302.44,97.54) and (292.55,106.65) .. (280.35,106.65) .. controls (268.15,106.65) and (258.26,97.54) .. (258.26,86.29)(227.72,86.29) .. controls (227.72,58.18) and (251.28,35.39) .. (280.35,35.39) .. controls (309.42,35.39) and (332.98,58.18) .. (332.98,86.29) .. controls (332.98,114.4) and (309.42,137.19) .. (280.35,137.19) .. controls (251.28,137.19) and (227.72,114.4) .. (227.72,86.29) ;
\draw    (332.98,86.29) ;
\draw [shift={(332.98,86.29)}, rotate = 0] [color={rgb, 255:red, 0; green, 0; blue, 0 }  ][fill={rgb, 255:red, 0; green, 0; blue, 0 }  ][line width=0.75]      (0, 0) circle [x radius= 3.35, y radius= 3.35]   ;
\draw   (260.73,228.49) .. controls (260.73,217.25) and (270.62,208.13) .. (282.82,208.13) .. controls (295.02,208.13) and (304.91,217.25) .. (304.91,228.49) .. controls (304.91,239.74) and (295.02,248.85) .. (282.82,248.85) .. controls (270.62,248.85) and (260.73,239.74) .. (260.73,228.49)(230.19,228.49) .. controls (230.19,200.38) and (253.75,177.6) .. (282.82,177.6) .. controls (311.88,177.6) and (335.44,200.38) .. (335.44,228.49) .. controls (335.44,256.6) and (311.88,279.39) .. (282.82,279.39) .. controls (253.75,279.39) and (230.19,256.6) .. (230.19,228.49) ;
\draw    (282.82,208.13) ;
\draw [shift={(282.82,208.13)}, rotate = 0] [color={rgb, 255:red, 0; green, 0; blue, 0 }  ][fill={rgb, 255:red, 0; green, 0; blue, 0 }  ][line width=0.75]      (0, 0) circle [x radius= 3.35, y radius= 3.35]   ;
\draw   (409.95,86.29) .. controls (409.95,75.05) and (419.84,65.93) .. (432.04,65.93) .. controls (444.24,65.93) and (454.13,75.05) .. (454.13,86.29) .. controls (454.13,97.54) and (444.24,106.65) .. (432.04,106.65) .. controls (419.84,106.65) and (409.95,97.54) .. (409.95,86.29)(379.41,86.29) .. controls (379.41,58.18) and (402.97,35.39) .. (432.04,35.39) .. controls (461.1,35.39) and (484.67,58.18) .. (484.67,86.29) .. controls (484.67,114.4) and (461.1,137.19) .. (432.04,137.19) .. controls (402.97,137.19) and (379.41,114.4) .. (379.41,86.29) ;
\draw    (484.67,86.29) ;
\draw [shift={(484.67,86.29)}, rotate = 0] [color={rgb, 255:red, 0; green, 0; blue, 0 }  ][fill={rgb, 255:red, 0; green, 0; blue, 0 }  ][line width=0.75]      (0, 0) circle [x radius= 3.35, y radius= 3.35]   ;
\draw   (412.41,228.49) .. controls (412.41,217.25) and (422.3,208.13) .. (434.5,208.13) .. controls (446.7,208.13) and (456.59,217.25) .. (456.59,228.49) .. controls (456.59,239.74) and (446.7,248.85) .. (434.5,248.85) .. controls (422.3,248.85) and (412.41,239.74) .. (412.41,228.49)(381.88,228.49) .. controls (381.88,200.38) and (405.44,177.6) .. (434.5,177.6) .. controls (463.57,177.6) and (487.13,200.38) .. (487.13,228.49) .. controls (487.13,256.6) and (463.57,279.39) .. (434.5,279.39) .. controls (405.44,279.39) and (381.88,256.6) .. (381.88,228.49) ;
\draw    (434.5,208.13) ;
\draw [shift={(434.5,208.13)}, rotate = 0] [color={rgb, 255:red, 0; green, 0; blue, 0 }  ][fill={rgb, 255:red, 0; green, 0; blue, 0 }  ][line width=0.75]      (0, 0) circle [x radius= 3.35, y radius= 3.35]   ;
\draw    (282.42,127.7) .. controls (316.95,121.46) and (320.65,108.99) .. (332.98,86.29) ;
\draw    (291.05,115.23) .. controls (314.48,107.74) and (315.71,105.25) .. (332.98,86.29) ;
\draw    (282.82,208.13) .. controls (295.98,198.8) and (307.08,206.29) .. (312.01,223.75) ;
\draw    (282.82,208.13) .. controls (287.35,191.32) and (316.95,198.8) .. (321.88,216.27) ;
\draw    (448.9,49.11) .. controls (469.87,52.86) and (479.73,68.83) .. (484.67,86.29) ;
\draw    (453.84,64.08) .. controls (472.33,65.33) and (472.33,74.06) .. (484.67,86.29) ;
\draw    (409.44,193.81) .. controls (427.94,195.06) and (430.4,198.8) .. (434.5,208.13) ;
\draw    (399.57,215.02) .. controls (418.07,202.54) and (422.17,195.91) .. (434.5,208.13) ;

\draw (136,50.4) node [anchor=north west][inner sep=0.75pt]    {$a_{1}$};
\draw (135,100.4) node [anchor=north west][inner sep=0.75pt]    {$a_{2}$};
\draw (134,198.4) node [anchor=north west][inner sep=0.75pt]    {$a_{1}$};
\draw (305,83.4) node [anchor=north west][inner sep=0.75pt]    {$a_{1}$};
\draw (307,225.4) node [anchor=north west][inner sep=0.75pt]    {$a_{1}$};
\draw (434.04,38.79) node [anchor=north west][inner sep=0.75pt]    {$a_{1}$};
\draw (415,182.4) node [anchor=north west][inner sep=0.75pt]    {$a_{1}$};
\draw (80,199.4) node [anchor=north west][inner sep=0.75pt]    {$a_{2}$};
\draw (265,118.4) node [anchor=north west][inner sep=0.75pt]    {$a_{2}$};
\draw (291,184.4) node [anchor=north west][inner sep=0.75pt]    {$a_{2}$};
\draw (455.84,67.48) node [anchor=north west][inner sep=0.75pt]    {$a_{2}$};
\draw (395,218.4) node [anchor=north west][inner sep=0.75pt]    {$a_{2}$};

\end{tikzpicture}

\end{center}
\caption{The six possible local configurations of an arc $a_1$ being clockwise from another arc $a_2$.}
\label{fig: 6 possible local configurations on Annulus}
\end{figure}
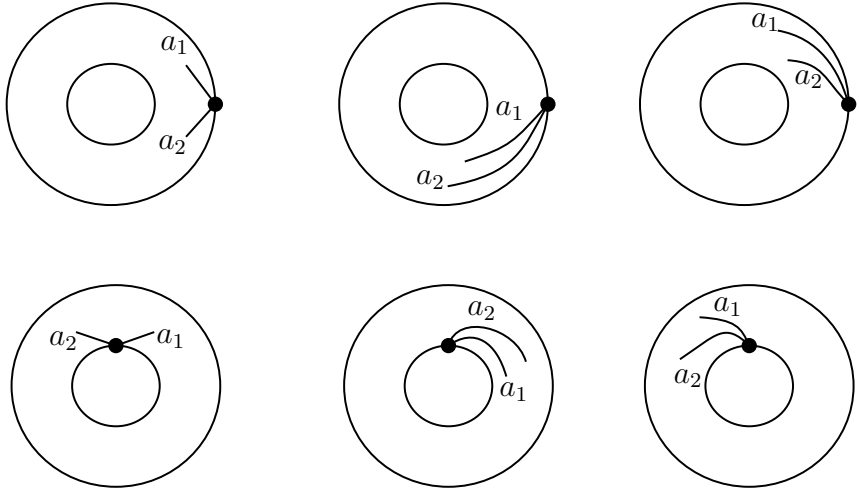

\begin{defn}
An \textbf{exceptional arc diagram} is a collection of $n = |Q_0^{\bm{\varepsilon}}|$ arcs on $A_{Q^{\bm{\varepsilon}}}$ that satisfies the following: 
\begin{enumerate}
\item no arc self-intersects or forms a loop,
\item distinct arcs do not intersect nontrivially, and 
\item the arcs do not form any cycles. 
\end{enumerate}

\end{defn}

\smallskip

\noindent
We have the following bijection $\varphi$ between strings {\color{black} that are not powers of bands} and arcs:

 \[ M = (i,j;l) \overset{\varphi}{\mapsto} \begin{cases} 
          a(i,j)[-l] & \text{if $M$ preinjective} \\
          a(i,j)[l] & \text{otherwise}
       \end{cases}.
    \]
    
Notice that under this bijection, bridging arcs that begin on the inner (outer) boundary component correspond to preinjective (preprojective) modules. On the other hand, exterior arcs beginning and ending on the inner (outer) boundary component correspond to right (left) regular modules. The following proposition was proven in \cite{maresca2022combinatorics} and will be important in what follows.

\begin{prop}[\cite{maresca2022combinatorics} Proposition 3.9] \label{prop: clockwise arcs and hom}

Let $Q^{\bm{\varepsilon}}$ be a quiver of type $\tilde{\mathbb{A}}_{n-1}$, let $U,V \in \text{ind}(\text{rep}(Q^{\bm{\varepsilon}}))$ be two exceptional string modules, and let $\gamma_1$ and $\gamma_2$ be their corresponding arcs on $A_{Q^{\bm\varepsilon}}$.
\begin{enumerate}

\item The arcs $\gamma_1$ and $\gamma_2$ do not intersect at any of their endpoints and they do not intersect nontrivially if and only if there are no extensions or morphisms between $U$ and $V$.

\item The arc $\gamma_1$ is clockwise from $\gamma_2$ if and only if Hom$(U,V)$ contains only non two-sided graph maps and Ext$(U,V) = \text{Hom}(V,U) = \text{Ext}(V,U) = 0$, or $U$ and $V$ are connectable and Hom$(U,V) =\text{Hom}(V,U) = \text{Ext}(V,U) = 0$.

\item The arcs $\gamma_1$ and $\gamma_2$ form a cycle if and only if $U$ is connectable to $V$, $V$ is connectable to $U$, and Hom$(U,V) = 0 = \text{Hom}(V,U)$. 

\item The arcs $\gamma_1$ and $\gamma_2$ intersect nontrivially if and only if there is a two-sided graph map between $U$ and $V$.

\end{enumerate}
\end{prop}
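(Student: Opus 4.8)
The plan is to turn every biconditional into a statement about the combinatorics of strings and then match the two sides configuration by configuration. On the algebraic side, Theorem \ref{thm: basis for Hom} lets me replace $\text{Hom}(U,V)$ by the span of graph maps coming from admissible pairs, and Theorem \ref{thm: basis for ext} lets me replace $\text{Ext}(U,V)$ by the span of the extensions produced by connections $C_1\alpha C_2$ together with those produced by two-sided graph maps in $\text{Hom}(V,U)$. On the geometric side, the bijection $\varphi$ replaces $U,V$ by the arcs $\gamma_1,\gamma_2$, from which I can read off endpoints, supports, and winding numbers directly from the strings $C_1,C_2$. Since, for a pair of non-self-intersecting arcs, the four listed situations (disjoint endpoints and interiors; $\gamma_1$ clockwise from $\gamma_2$; forming a cycle; crossing transversally) are mutually exclusive and jointly exhaustive, it suffices to prove the four forward implications and then invoke this dichotomy to obtain all the converses at once.

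The cornerstone is item (4), which I would establish first. A two-sided graph map is built from a common substring $E$ (with $E_1=E_2$ or $E_1=E_2^{-1}$) that is flanked by a nontrivial piece on the right in at least one of $C_1,C_2$ and on the left in at least one. Geometrically, such a common substring corresponds to a maximal segment along which $\gamma_1$ and $\gamma_2$ run parallel; the two flanking conditions say precisely that this common corridor is bounded away from the endpoints of both arcs on both ends, so the arcs enter and leave it on opposite sides and must cross transversally. I would make this rigorous by lifting to the universal cover of $A_{Q^{\bm\varepsilon}}$ (the infinite strip), where the winding numbers unwind and the overlap becomes an honest parallel segment in a disk-like picture; there ``flanked on both sides'' is exactly the condition for the two lifts to cross. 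Conversely, reading the two strings along a transversal crossing recovers a common substring flanked on both sides, hence a two-sided graph map. This is the \textbf{main obstacle}, because one must track the annular topology and the orientation conventions (strings traversed counterclockwise, arcs clockwise) carefully, and must check that a genuine crossing always produces a \emph{flanked} overlap rather than a mere boundary overlap.

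With (4) in hand, items (1)--(3) all concern arcs that do not cross, so by (4) there are no two-sided graph maps; hence every morphism is a non-two-sided graph map and every extension comes from a connection. I would then run through the six local configurations of Figure \ref{fig: 6 possible local configurations on Annulus} defining ``$\gamma_1$ clockwise from $\gamma_2$''. Each such configuration describes the arcs meeting near a shared endpoint or nesting without crossing, and reading off the corresponding strings shows it yields either a one-sided graph map in $\text{Hom}(U,V)$ with the other three spaces vanishing, or a single connection $C_1\alpha C_2$ (so $U,V$ are connectable, giving an extension) with the homs vanishing---exactly the two alternatives in (2). For (3), a cycle means the arcs concatenate head-to-tail in both rotational directions, i.e.\ $C_1$ is connectable to $C_2$ and $C_2$ to $C_1$, while the closed-up picture leaves no quotient/submodule overlap, forcing $\text{Hom}(U,V)=\text{Hom}(V,U)=0$. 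Finally, (1) is the case of disjoint supports and no shared endpoints, where there is neither a common substring nor a connection, so all Hom and Ext spaces vanish.

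I would close by recording exclusivity and exhaustiveness: a pair of non-self-intersecting arcs either crosses, which is (4), or not; and if not, their endpoints and supports place them in exactly one of the disjoint case (1), the clockwise case (2), or the cyclic case (3), these being the three non-crossing possibilities enumerated by the local pictures. The four forward implications together with this trichotomy-plus-crossing dichotomy then deliver every direction of the proposition.
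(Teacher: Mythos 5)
This proposition is not proved in the present paper: it is recalled verbatim from \cite{maresca2022combinatorics} (Proposition 3.9 there), so there is no in-paper argument to compare yours against. Judged on its own terms, your outline follows the route one would expect (and which the cited paper essentially takes): convert both sides to string combinatorics via Theorem \ref{thm: basis for Hom} and Theorem \ref{thm: basis for ext}, prove that two-sided graph maps correspond exactly to transversal crossings by analysing the maximal common substring in the universal cover, and then dispose of the non-crossing cases by running through the shared-endpoint configurations of Figure \ref{fig: 6 possible local configurations on Annulus}. You correctly identify item (4) as the load-bearing step.

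The one genuine soft spot is your claim that the four listed situations are ``mutually exclusive and jointly exhaustive,'' from which you propose to deduce all converses for free. As stated this is not quite right: the list omits the fifth possibility that $\gamma_2$ is clockwise from $\gamma_1$ (handled only implicitly by swapping $U$ and $V$ in item (2)), and items (2) and (3) overlap geometrically, since by the paper's definition a $2$-cycle is precisely the configuration in which \emph{each} arc is clockwise from the other --- note that the algebraic conclusion of (2) (which forces $\operatorname{Ext}(V,U)=0$) is incompatible with that of (3), so the intended reading of ``clockwise from'' in (2) must exclude the cyclic case. To make the ``prove the forward implications and invoke the dichotomy'' scheme airtight you must therefore also check that the algebraic right-hand sides of (1), (2) for $(U,V)$, (2) for $(V,U)$, (3), and (4) are themselves pairwise incompatible and cover all possibilities for a pair of exceptional string modules; this is true but is an additional verification (using, e.g., that a two-sided graph map is in particular a nonzero morphism, and that connectability both ways forces nonvanishing Ext both ways), not an automatic consequence of the geometric case division. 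With that bookkeeping added, the proposal is a sound reconstruction.
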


Using this proposition, the next theorem was proven in \cite{maresca2022combinatorics}.

\begin{thm}[\cite{maresca2022combinatorics} Theorem 3.8]\label{thm: ec bijection with arc diagrams}
There is a bijection between exceptional collections of $\Bbbk Q^{\bm{\varepsilon}}$ modules and exceptional arc diagrams on $A_{Q^{\bm{\varepsilon}}}$.
\end{thm}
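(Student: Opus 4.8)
The plan is to realize the bijection concretely through the string-to-arc correspondence $\varphi$ and then verify the three defining conditions of an exceptional arc diagram by translating them, via Proposition \ref{prop: clockwise arcs and hom}, into the Hom/Ext conditions defining an exceptional sequence. First I would observe that in type $\tilde{\mathbb{A}}$ every exceptional module is a string module rather than a band module: homogeneous (band) modules self-extend and so are never exceptional, and correspondingly $\varphi$ sends them to closed curves rather than to arcs. Thus $\varphi$ restricts to a bijection between exceptional string modules and those arcs that neither self-intersect nor form a loop. Given an exceptional collection $\{E_1,\dots,E_k\}$, I would declare its image to be $\{\varphi(E_1),\dots,\varphi(E_k)\}$; the content of the theorem is that this assignment lands in, and exhausts, the exceptional arc diagrams, and is invertible.

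For the forward direction I would check the three conditions in turn. An arc self-intersects precisely when the corresponding module admits a two-sided graph map to itself (Proposition \ref{prop: clockwise arcs and hom}(4) applied to $U=V$), which by Theorem \ref{thm: basis for ext} produces a nonzero self-extension; since $\mathrm{Ext}(E_i,E_i)=0$, no arc self-intersects, and loops are excluded likewise using Lemma \ref{lem: Happel-Ringel Lemma}. For distinct arcs, Proposition \ref{prop: clockwise arcs and hom}(4) equates a nontrivial intersection with a two-sided graph map between the two modules, and such a map obstructs both orderings of the pair; since $\{E_i,E_j\}$ sits inside an exceptional sequence, one ordering is admissible, so the arcs cannot intersect nontrivially. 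For the no-cycles condition I would use Proposition \ref{prop: clockwise arcs and hom}(2)--(3): ``$\varphi(E_a)$ clockwise from $\varphi(E_b)$'' forces $\mathrm{Hom}(E_b,E_a)=\mathrm{Ext}(E_b,E_a)=\mathrm{Ext}(E_a,E_b)=0$, so in any exceptional ordering $E_a$ must precede $E_b$; a cycle of arcs would then impose a cyclic chain of precedences, contradicting the existence of an exceptional ordering.

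For the inverse direction, starting from an exceptional arc diagram I would apply $\varphi^{-1}$ to recover a set of string modules, each exceptional because its arc does not self-intersect (reversing the first step). To produce an exceptional ordering I would define $E_a \prec E_b$ whenever $\varphi(E_a)$ is clockwise from $\varphi(E_b)$. By conditions (2) and (3) of the arc-diagram definition, each pair is either clockwise-comparable or mutually Hom/Ext-orthogonal (Proposition \ref{prop: clockwise arcs and hom}(1), in which case either order is allowed), while the no-cycles condition guarantees $\prec$ has no directed cycles. A topological sort of $\prec$ then yields a linear order, and parts (1)--(2) of Proposition \ref{prop: clockwise arcs and hom} supply exactly the vanishing of $\mathrm{Hom}$ and $\mathrm{Ext}$ needed for this order to be an exceptional sequence. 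Since $\varphi$ is a bijection on the underlying strings and arcs, the two constructions are mutually inverse.

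The main obstacle I anticipate is the no-cycles step: translating the topological condition ``the arcs form no cycle'' into the algebraic statement ``the modules admit a linear exceptional ordering.'' The delicate point is to show that the clockwise relation behaves like a strict partial order whose linear extensions are exactly the exceptional orderings -- in particular, ruling out longer cyclic obstructions beyond the $2$-cycles handled directly by Proposition \ref{prop: clockwise arcs and hom}(3), and confirming that non-intersecting (incomparable) pairs never secretly force an orientation. Making the acyclicity of $\prec$ across the entire diagram precise, and verifying that topological sorting genuinely produces the simultaneous vanishing of $\mathrm{Hom}$ and $\mathrm{Ext}$ demanded at every position, is where the real work lies; Lemma \ref{lem: uniqueness in exceptional sequence} together with the basis results (Theorems \ref{thm: basis for Hom} and \ref{thm: basis for ext}) are the tools I would lean on to close this gap.
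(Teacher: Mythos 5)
Your proof is correct and follows the route the paper indicates: Theorem \ref{thm: ec bijection with arc diagrams} is imported from \cite{maresca2022combinatorics}, where (as the paper states just before the theorem) it is proved precisely by translating the three defining conditions of an exceptional arc diagram into Hom/Ext conditions via Proposition \ref{prop: clockwise arcs and hom}, exactly as you do. One small imprecision: when $\gamma_{E_a}$ is clockwise from $\gamma_{E_b}$, part (2) of that proposition gives the vanishing of $\mathrm{Hom}(E_b,E_a)$, $\mathrm{Ext}(E_b,E_a)$ and exactly one of $\mathrm{Hom}(E_a,E_b)$, $\mathrm{Ext}(E_a,E_b)$ (not all three that you list), and it is the surviving nonzero space that forces $E_a$ to precede $E_b$ --- but since that is precisely the conclusion you draw, the argument stands.
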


{\color{black}
In what follows, it will also be important to understand the band modules. In type $\tilde{\mathbb{A}}$, there is one band (up to cyclic permutation and inversion), namely, $\beta = \beta_1 \beta_{2} \cdots \beta_{n}$ where $s(\beta_1) = t(\beta_{n})$ and $\beta_j$ appears exactly once for each $j$. Then $\beta$ defines, up to isomorphism, a family of band modules $(\beta,\varphi)$ where $\varphi$ is an indecomposable automorphism of $\Bbbk$. Here, each vertex of $\beta$ is replaced by a copy of the vector space $\Bbbk$, and the action of an arrow $\beta_l$ is induced by identity morphism if $l \neq n$, and by $\varphi$ if $l = n$. 

For each automorphism $\varphi$, the band module $(\beta,\varphi)$ is the unique quasi-simple at the mouth of a homogeneous tube indexed by $\varphi$. The modules that lie above the quasi-simple bands in homogeneous tubes are powers of the band $\beta$. More precisely, they are of the form $\beta^l = (\beta_1\beta_2\cdots\beta_n)^l$;
that is, they are the $l$-fold concatenation of $\beta$. The band $\beta^l$ corresponds to an infinite family of string modules $(\beta^l,\varphi)$ where $\varphi$ varies through the indecomposable automorphisms of $\Bbbk^l$. Here, each vertex of $\beta$ is replaced by a copy of the vector space $\Bbbk^l$, and the action of an arrow $\beta_j$ is induced by identity morphism if $j \neq n$ and by $\varphi$ if $j = n$.

Then we have the following bijection between families of powers of band modules and closed curves:

\[ (\beta^l,\varphi) \overset{\psi}{\mapsto} \gamma[l]. \]

where $\gamma[l]$ is the isotopy class of a closed curve around the inner boundary component of the annulus with winding number $l$.  }

\begin{exmp}\label{exmp: closed curves}
An example of the simple closed curve corresponding to the band $\beta$ in blue and the closed curve corresponding to $\beta^2$ in red.

\begin{center}
\tikzset{every picture/.style={line width=0.75pt}} 

\begin{tikzpicture}[x=0.75pt,y=0.75pt,yscale=-1,xscale=1]

\draw   (216.35,141.7) .. controls (216.35,126.95) and (227.69,114.99) .. (241.68,114.99) .. controls (255.66,114.99) and (267.01,126.95) .. (267.01,141.7) .. controls (267.01,156.44) and (255.66,168.4) .. (241.68,168.4) .. controls (227.69,168.4) and (216.35,156.44) .. (216.35,141.7)(178.35,141.7) .. controls (178.35,105.97) and (206.7,77) .. (241.68,77) .. controls (276.65,77) and (305,105.97) .. (305,141.7) .. controls (305,177.43) and (276.65,206.39) .. (241.68,206.39) .. controls (206.7,206.39) and (178.35,177.43) .. (178.35,141.7) ;
\draw    (241.68,114.99) ;
\draw [shift={(241.68,114.99)}, rotate = 0] [color={rgb, 255:red, 0; green, 0; blue, 0 }  ][fill={rgb, 255:red, 0; green, 0; blue, 0 }  ][line width=0.75]      (0, 0) circle [x radius= 3.35, y radius= 3.35]   ;
\draw    (267.01,141.7) ;
\draw [shift={(267.01,141.7)}, rotate = 0] [color={rgb, 255:red, 0; green, 0; blue, 0 }  ][fill={rgb, 255:red, 0; green, 0; blue, 0 }  ][line width=0.75]      (0, 0) circle [x radius= 3.35, y radius= 3.35]   ;
\draw    (216.35,141.7) ;
\draw [shift={(216.35,141.7)}, rotate = 0] [color={rgb, 255:red, 0; green, 0; blue, 0 }  ][fill={rgb, 255:red, 0; green, 0; blue, 0 }  ][line width=0.75]      (0, 0) circle [x radius= 3.35, y radius= 3.35]   ;
\draw    (282.01,92.7) ;
\draw [shift={(282.01,92.7)}, rotate = 0] [color={rgb, 255:red, 0; green, 0; blue, 0 }  ][fill={rgb, 255:red, 0; green, 0; blue, 0 }  ][line width=0.75]      (0, 0) circle [x radius= 3.35, y radius= 3.35]   ;
\draw    (280.35,192.39) ;
\draw [shift={(280.35,192.39)}, rotate = 0] [color={rgb, 255:red, 0; green, 0; blue, 0 }  ][fill={rgb, 255:red, 0; green, 0; blue, 0 }  ][line width=0.75]      (0, 0) circle [x radius= 3.35, y radius= 3.35]   ;
\draw    (188.35,177.39) ;
\draw [shift={(188.35,177.39)}, rotate = 0] [color={rgb, 255:red, 0; green, 0; blue, 0 }  ][fill={rgb, 255:red, 0; green, 0; blue, 0 }  ][line width=0.75]      (0, 0) circle [x radius= 3.35, y radius= 3.35]   ;
\draw    (195.35,98.39) ;
\draw [shift={(195.35,98.39)}, rotate = 0] [color={rgb, 255:red, 0; green, 0; blue, 0 }  ][fill={rgb, 255:red, 0; green, 0; blue, 0 }  ][line width=0.75]      (0, 0) circle [x radius= 3.35, y radius= 3.35]   ;
\draw    (241.68,206.39) ;
\draw [shift={(241.68,206.39)}, rotate = 0] [color={rgb, 255:red, 0; green, 0; blue, 0 }  ][fill={rgb, 255:red, 0; green, 0; blue, 0 }  ][line width=0.75]      (0, 0) circle [x radius= 3.35, y radius= 3.35]   ;
\draw  [color={rgb, 255:red, 22; green, 48; blue, 226 }  ,draw opacity=1 ] (199.18,141.7) .. controls (199.18,118.22) and (218.2,99.2) .. (241.68,99.2) .. controls (265.15,99.2) and (284.18,118.22) .. (284.18,141.7) .. controls (284.18,165.17) and (265.15,184.2) .. (241.68,184.2) .. controls (218.2,184.2) and (199.18,165.17) .. (199.18,141.7) -- cycle ;
\draw   (409.35,141.7) .. controls (409.35,126.95) and (420.69,114.99) .. (434.68,114.99) .. controls (448.66,114.99) and (460.01,126.95) .. (460.01,141.7) .. controls (460.01,156.44) and (448.66,168.4) .. (434.68,168.4) .. controls (420.69,168.4) and (409.35,156.44) .. (409.35,141.7)(371.35,141.7) .. controls (371.35,105.97) and (399.7,77) .. (434.68,77) .. controls (469.65,77) and (498,105.97) .. (498,141.7) .. controls (498,177.43) and (469.65,206.39) .. (434.68,206.39) .. controls (399.7,206.39) and (371.35,177.43) .. (371.35,141.7) ;
\draw    (434.68,114.99) ;
\draw [shift={(434.68,114.99)}, rotate = 0] [color={rgb, 255:red, 0; green, 0; blue, 0 }  ][fill={rgb, 255:red, 0; green, 0; blue, 0 }  ][line width=0.75]      (0, 0) circle [x radius= 3.35, y radius= 3.35]   ;
\draw    (460.01,141.7) ;
\draw [shift={(460.01,141.7)}, rotate = 0] [color={rgb, 255:red, 0; green, 0; blue, 0 }  ][fill={rgb, 255:red, 0; green, 0; blue, 0 }  ][line width=0.75]      (0, 0) circle [x radius= 3.35, y radius= 3.35]   ;
\draw    (409.35,141.7) ;
\draw [shift={(409.35,141.7)}, rotate = 0] [color={rgb, 255:red, 0; green, 0; blue, 0 }  ][fill={rgb, 255:red, 0; green, 0; blue, 0 }  ][line width=0.75]      (0, 0) circle [x radius= 3.35, y radius= 3.35]   ;
\draw    (475.01,92.7) ;
\draw [shift={(475.01,92.7)}, rotate = 0] [color={rgb, 255:red, 0; green, 0; blue, 0 }  ][fill={rgb, 255:red, 0; green, 0; blue, 0 }  ][line width=0.75]      (0, 0) circle [x radius= 3.35, y radius= 3.35]   ;
\draw    (473.35,192.39) ;
\draw [shift={(473.35,192.39)}, rotate = 0] [color={rgb, 255:red, 0; green, 0; blue, 0 }  ][fill={rgb, 255:red, 0; green, 0; blue, 0 }  ][line width=0.75]      (0, 0) circle [x radius= 3.35, y radius= 3.35]   ;
\draw    (381.35,177.39) ;
\draw [shift={(381.35,177.39)}, rotate = 0] [color={rgb, 255:red, 0; green, 0; blue, 0 }  ][fill={rgb, 255:red, 0; green, 0; blue, 0 }  ][line width=0.75]      (0, 0) circle [x radius= 3.35, y radius= 3.35]   ;
\draw    (388.35,98.39) ;
\draw [shift={(388.35,98.39)}, rotate = 0] [color={rgb, 255:red, 0; green, 0; blue, 0 }  ][fill={rgb, 255:red, 0; green, 0; blue, 0 }  ][line width=0.75]      (0, 0) circle [x radius= 3.35, y radius= 3.35]   ;
\draw    (434.68,206.39) ;
\draw [shift={(434.68,206.39)}, rotate = 0] [color={rgb, 255:red, 0; green, 0; blue, 0 }  ][fill={rgb, 255:red, 0; green, 0; blue, 0 }  ][line width=0.75]      (0, 0) circle [x radius= 3.35, y radius= 3.35]   ;
\draw [color={rgb, 255:red, 208; green, 2; blue, 27 }  ,draw opacity=1 ]   (431,102) .. controls (479,94) and (500,171) .. (451,183) .. controls (402,195) and (390,156) .. (392,137) .. controls (394,118) and (434.79,54.02) .. (476.9,111.07) .. controls (519.01,168.13) and (448.61,217.34) .. (404.31,186.67) .. controls (360,156) and (379,113) .. (431,102) -- cycle ;

\end{tikzpicture}
\end{center}
\end{exmp}

\begin{rem}
{
Mathematicians have been using triangulations of marked surfaces to study {\color{black} gentle} algebras as seen in \cite{clustersareinbijectionwithtriangulations}, \cite{Master'sThesisClustersandTriangulations}, and \cite{baur2021geometric} to name a few. Here, our bijection between arcs and indecomposable representations is, {\color{black} we believe}, equivalent to the one in \cite{baur2021geometric}. In \cite{igusa2024clusters}, we showed precisely when the convention in \cite{Master'sThesisClustersandTriangulations} and the one in this paper coincide. }
\end{rem}

\subsection{A geometric model for the derived category of quivers of type $\tilde{\mathbb{A}}$}

Throughout the paper, we reserve $\mathscr{D}$ to denote the bounded derived category of mod$\Bbbk Q$ for $Q$ a finite acyclic quiver. We denote the shift functor by $\Sigma$. In the case $Q$ is finite and acyclic, we know that the indecomposable objects in $\mathscr{D}$ are stalk complexes centered at 0, or finite non-zero shifts of these stalk complexes centered at 0 {(\cite{schofield1990triangulated} Section 5.2)}. For $X$ a $\Bbbk Q$-module, we denote the stalk complex with $X$ in the 0th position by $X$, and the $i$th shift of it by $\Sigma^i X$. 

Now, let $Q^{\bm{\varepsilon}}$ be an acyclic quiver of type $\tilde{\mathbb{A}}_{n-1}$. Then all objects in $\mathscr{D}$ are either indecomposable \textbf{string objects} ($\Sigma^i X$ where $X$ is a string module), or indecomposable \textbf{band objects} ($\Sigma^i X$ where $X$ is a band module).  As was done in \cite{opper2018geometric} for gentle algebras, we can construct a geometric model for $\mathscr{D}$ by using the annulus $A_{Q^{\bm{\varepsilon}}}$. 

\begin{defn}
    Let $a(i,j)[\lambda]$ be an arc on $A_{Q^{\bm{\varepsilon}}}$ and let $P$ denote the set of marked points $\{1,2,\dots,n\}$ on the boundary components of $A_{Q^{\bm{\varepsilon}}}$. A function $f:P\rightarrow \mathbb{Z}$ is called a \textbf{grading} on $a(i,j)[\lambda]$ if for all {marked points} $l$ in the support of $a(i,j)[\lambda]$, we have

    \[ f(l+1) = \begin{cases} 
          f(l) + 1 & \text{if $l$ is on the outer boundary} \\
          f(l) - 1 & \text{otherwise}
       \end{cases}
    \]

    where we take the convention that $n+1 := 1$. We call $(a(i,j)[\lambda],f)$ a \textbf{graded arc}. We define \textbf{graded closed curves} {\color{black} similarly}.
\end{defn}

Before stating the bijection, we note that in the case of a finite acyclic quiver of type $\tilde{\mathbb{A}}$, a grading always exists and is uniquely determined by its value at one point.

\begin{thm}[\cite{opper2018geometric} Theorem 2.12]
\leavevmode
    \begin{itemize}
        \item There is a bijection between isoclasses of indecomposable string objects in $\mathscr{D}$ and graded arcs $(\gamma,f)$.
        \item There is a bijection between families of indecomposable band objects and graded closed curves $(\gamma,f)$.
    \end{itemize}  
\end{thm}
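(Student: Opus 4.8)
The plan is to reduce the statement to the module-level bijection already established and then to account for the shift functor by means of the grading. Since $\Bbbk Q$ is hereditary, the bounded derived category $\mathscr{D}$ is especially rigid: as recalled above, every indecomposable object is isomorphic to $\Sigma^i M$ for a unique indecomposable module $M$ and a unique $i \in \mathbb{Z}$. By the Butler--Ringel classification \cite{BR}, each such $M$ is either a string module or a power of a band module, so the indecomposable string objects are exactly the $\Sigma^i M$ with $M$ a string module, and similarly for band objects. I would begin by fixing this enumeration of the left-hand sides of the two bijections.

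Next I would exhibit the grading as the bookkeeping device for the shift. As noted before the statement, a grading on a fixed arc $\gamma$ is uniquely determined by its value at a single marked point, so the set of gradings on $\gamma$ is a torsor over $\mathbb{Z}$ under the addition of a global constant (one checks directly that $f+c$ again satisfies the defining recursion); the same holds for closed curves. On the other side, $\{\Sigma^i M : i \in \mathbb{Z}\}$ is a torsor over $\mathbb{Z}$ under $\Sigma$. Letting $\Sigma$ act on graded arcs by the uniform shift $(\gamma,f) \mapsto (\gamma, f+1)$, it then suffices to produce a single $\mathbb{Z}$-equivariant bijection; concretely, I would pin down a canonical grading $f_0$ on each arc $\gamma$ and declare $(\gamma, f_0) \leftrightarrow \varphi^{-1}(\gamma)$, the degree-zero stalk, thereby extending the bijection $\varphi$ from the module category to $\mathscr{D}$.

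The canonical grading is read off directly from the reduced walk defining the string: as the arc sweeps across its support, its value changes by $+1$ or $-1$ according to whether the marked point lies on the outer or inner boundary, which is exactly the alternation of direct and inverse syllables in the walk. I would check that this assignment (i) satisfies the defining recursion and descends to isotopy classes, (ii) returns the correct degree-zero object, and (iii) is equivariant for $\Sigma$. For bands one argues in parallel, with two further checks: that the whole family $(\beta^l,\varphi)$ obtained by varying the automorphism $\varphi$ collapses onto the single closed curve $\gamma[l]$, as in the bijection $\psi$, and that the grading closes up consistently around the loop.

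The main obstacle is proving that this $\pm 1$ rule is the correct homological bookkeeping, that is, that the grading genuinely records cohomological degree and is compatible with the whole triangulated structure and not merely with the shift torsor. Establishing this from scratch would require computing gradings as winding numbers of $\gamma$ against the dissection of $A_{Q^{\bm{\varepsilon}}}$ and matching them with the homology of the associated complexes, or equivalently a direct analysis of how adding hooks and cohooks, which generate the irreducible maps, moves the grading. Since the statement is quoted from \cite{opper2018geometric}, I would ultimately invoke their general theorem for gentle algebras, verifying only that our annulus, its dissection, and our grading convention match theirs so that their bijection specializes to the one asserted here.
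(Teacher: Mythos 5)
This statement is imported verbatim from \cite{opper2018geometric} and the paper offers no proof of it whatsoever --- it is a cited black box --- so there is no argument of the authors' to compare yours against. Your proposal is nevertheless a reasonable account of how the module-level bijection $\varphi$ extends to $\mathscr{D}$: the observation that, for a fixed arc, the set of gradings is a $\mathbb{Z}$-torsor under adding a constant (immediate from the recursion $f(l+1)=f(l)\pm1$ and the uniqueness remark preceding the theorem), while $\{\Sigma^i M\}$ is a $\mathbb{Z}$-torsor under $\Sigma$, correctly reduces everything to pinning one canonical grading on the degree-zero stalk and checking $\Sigma$-equivariance. You also rightly identify where the genuine content sits --- namely that the $\pm1$ rule is the correct homological bookkeeping compatible with the triangulated structure, not merely with the shift torsor --- and you defer that to the cited general theorem for gentle algebras, which is exactly what the paper implicitly does. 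Two small cautions if you were to write this out: first, verifying that the paper's grading convention (outer boundary $\mapsto +1$, inner $\mapsto -1$, read along the support) matches the winding-number/dissection formulation of \cite{opper2018geometric} is a nontrivial convention-chase, not a formality, and is the one step you would actually have to carry out; second, for band objects the bijection is between graded closed curves and \emph{one-parameter families} $(\beta^l,\varphi)$, so the torsor argument applies to families rather than to individual isoclasses --- you note this, but the collapse over the automorphism $\varphi$ is part of the statement being cited, not something recovered by the equivariance argument.
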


When we say the {\textbf{homotopy class}} of an object in $\mathscr{D}$, we mean the homotopy class of its corresponding arc $\gamma$ in the above bijection. 

In what follows, it will be important to fix a convention for the Auslander--Reiten quiver for rep$Q$, and thus $\mathscr{D}$. We take the convention that for preprojectives, moving down corays corresponds to adding hooks at the end of the string, while moving up rays corresponds to adding hooks at the start of the string. Dually, for preinjectives, moving down corays corresponds to deleting cohooks at the start of the {\color{black} string}, where moving up rays corresponds to deleting cohooks at the end of the string. In the tube of left (right) regular modules, moving one position up a ray corresponds to adding a hook at the start (end) of the string, and moving one position down a coray corresponds to deleting a cohook at the end (start) of the string. This is the same convention used in {\cite{maresca2022combinatorics}}, and by taking this convention, the remainder of the {\color{black} Auslander--Reiten} quiver of $\mathscr{D}$ is fixed.

\section{The Hom-Ext quiver} \label{sec: H-E quiver}

In this section, we will define the Hom-Ext quiver and explore some of its useful properties. Throughout this section, let $Q$ denote a finite, acyclic quiver with $n$ vertices, {\color{black} and} let $\mathscr{D}$ denote the bounded derived category of rep$Q$. We will define the Hom-Ext quiver in two ways; one by viewing it in terms of $\mathscr{D}$ and the other by viewing it in terms of rep$Q$.

\begin{defn} \label{defn: H-E quiver in D}
Let $\{X_1, X_2, \dots, X_m\}$ be a collection of {\color{black} indecomposable} $\Bbbk Q$-modules and $\chi$ their direct sum. Consider the object $\chi \oplus \Sigma \chi \subset \mathscr{D}$ and let $(Q^E,I)$ be the quiver with relations associated to the endomorphism algebra End$_{\mathscr{D}}(\chi \oplus \Sigma \chi)$. In $Q^E = (Q^E_0,Q^E_1,s^E, t^E)$, we label the arrows as follows:

\begin{itemize}
\item If the starting and ending points of an arrow both lie in the set $\chi$, say $X_j\to X_k$, we label this arrow with $\alpha_i$ such that there is no other arrow with the same label.

\item If the starting point of an arrow lies in the set $\chi$ and the ending point lies in $\Sigma \chi$, we label this arrow with $\beta_i$ such that there is no other arrow with the same label.

\item If the starting and ending points of an arrow both lie in the set $\Sigma \chi$, say $\Sigma X_j\to\Sigma X_k$, we label this arrow with $\Sigma \alpha_i$ {\color{black} where} $\alpha_i:X_j\to X_k$ is an arrow in $Q^E_1$.
\end{itemize}

We now equip $Q^E$ with an equivalence relation $\sim \, = (\sim_0, \sim_1)$ as follows. Two vertices are related, denoted by $\Sigma^iX_j \sim_0 \Sigma^kX_l$, if and only if $j = l$. Analogously, two arrows are related, denoted by $\Sigma^i\gamma_j \sim_1 \Sigma^k\gamma_l$, if and only if $\gamma_j = \gamma_l$. The \textbf{Hom-Ext quiver} of $\chi$ is the quotient quiver $(Q^\chi,R) := (Q^E / \sim,R)$ where $R$ is generated by linear combinations of paths in $Q^{\chi}$ whose preimages are zero under {the quotient map} {\color{black}$\pi:Q^{\color{black} E}\to Q^E/\sim$}.

\end{defn}

\begin{exmp}\label{exmp: H-E quiver as a quotient}
Let $Q$ be the quiver 

\[\begin{xymatrix}{1 \ar[dr] & & \\ & 3 \ar[dl] & 4 \ar[l] \\ 2 & &}\end{xymatrix} \]

Consider the set of modules $\{{4\atop {3 \atop 2}}, {1 \,\, 4 \atop {3 \, 3 \atop 2}}, {1\atop 3}\}$. Then in $\mathscr{D}$, we have the following quiver associated to the endomorphism algebra:

\[\begin{xymatrix}{ {4\atop {3 \atop 2}} \ar[r]^{\alpha_1} & {1 \,\, 4 \atop {3 \, 3 \atop 2}} \ar[r]^{\alpha_2} & {1\atop 3} \ar[dll]^{\beta_1} \\ {\huge \Sigma} {4\atop {3 \atop 2}} \ar[r]_{\Sigma \alpha_1} & \Sigma {1 \,\, 4 \atop {3 \, 3 \atop 2}} \ar[r]_{\Sigma \alpha_2} & \Sigma {1\atop 3} }\end{xymatrix} \]
 subject to the relations generated by paths of length two.

 To attain the Hom-Ext quiver, we take the quotient and get the following quiver:

\[
\begin{xymatrix}{
\bigg[{4\atop {3 \atop 2}}\bigg] \ar[r]^{[\alpha_1]} & \bigg[{1 \,\, 4 \atop {3 \, 3 \atop 2}}\bigg] \ar[r]^{[\alpha_2]} & \bigg[{1\atop 3} \bigg] \ar@/^2pc/^{[\beta_1]}[ll]^{\beta_1}}\end{xymatrix}
\]

subject to the relations generated by paths of length two.
\end{exmp}

\begin{rem}
Note that in Definition \ref{defn: H-E quiver in D}, we need only look at the objects $\chi$ and $\Sigma \chi$ since $\Bbbk Q$ is hereditary. Moreover, the three types of arrows listed in that definition is an exhaustive list again, since $Q$ is finite and acyclic. This definition is readily generalizable to higher ({potentially} even infinite) global dimensions by looking at all shifts of the modules in $\chi$.
\end{rem}

\begin{rem}
We would like to note that Buan, Reiten, and Thomas defined a Hom-Ext quiver in \cite{buan2011three}. The Hom-Ext quiver defined here is different though, since not all morphisms/extensions between modules correspond to arrows in the quiver. We take only the irreducible ones.
\end{rem}

In order to avoid equivalence classes, we can define the Hom-Ext quiver entirely in terms of rep$Q$. To do this, we will need to introduce some terminology.

\begin{defn}
Let $X, Y,$ and $Z$ be representations of $Q$ and suppose Ext$(X,Z) \neq 0 \neq \text{Ext}(Y,Z)$. We say that the extension $\xi \in \text{Ext}(X,Z)$ is \textbf{pushed forward to the extension} $\xi' \in \text{Ext}(Y,Z)$ \textbf{along $h$} if there exists a morphism $h \in \text{Hom}(X,Y)$ such that there is a morphism $\xi \rightarrow \xi'$ of short exact sequences of the following form:

\begin{center}
\begin{tabular}{c}
\begin{xymatrix}{
0 \ar[r] \ar[d] & Z \ar[r] \ar[d]_{\mathbbm{1}} & E \ar[r] \ar@{.>}[d] & X \ar[r] \ar[d]_h & 0 \ar[d] \\
0 \ar[r] & Z \ar[r] & E' \ar[r] & Y \ar[r] & 0 }
\end{xymatrix}
\end{tabular}
\end{center}

In this case, we also say that the extension $\xi'$ is \textbf{pulled back to the extension $\xi$ along $h$}. 

\begin{rem}\label{rem: push forward in D}
The extension $\xi$ being pushed forward to the extension $\xi'$ is equivalent to $\xi \in \text{Hom}_{\mathscr{D}}(X,\Sigma Z)$ being equal to the composition $\xi' \circ h$ where $h \in \text{Hom}_{\mathscr{D}}(X,Y)$ and $\xi' \in \text{Hom}_{\mathscr{D}}(Y, \Sigma Z)$.
\end{rem}

On the other hand, suppose Ext$(X,Z) \neq 0 \neq \text{Ext}(X,Y)$. We say that the extension $\xi \in \text{Ext}(X,Z)$ is \textbf{pulled back to the extension} $\xi' \in \text{Ext}(X,Y)$ \text{along $h$} if there exists a morphism $h \in \text{Hom}(Y,Z)$ such that there is a morphism $\xi' \rightarrow \xi$ of short exact sequences of the following form:

\begin{center}
\begin{tabular}{c}
\begin{xymatrix}{
0 \ar[r] \ar[d] & Y \ar[r] \ar[d]_{h} & E' \ar[r] \ar@{.>}[d] & X \ar[r] \ar[d]_{\mathbbm{1}} & 0 \ar[d] \\
0 \ar[r] & Z \ar[r] & E \ar[r] & X \ar[r] & 0 }
\end{xymatrix}
\end{tabular}
\end{center}

In this case, we also say that the extension $\xi'$ is \textbf{pushed forward to the extension $\xi$ along the morphism $h$}. 

\begin{rem}\label{rem: pull back in D}
Again, the extension $\xi$ being pulled back to the extension $\xi'$ is equivalent to $\xi \in \text{Hom}_{\mathscr{D}}(X,\Sigma Z)$ being the composition $\Sigma h \circ \xi'$ where $h \in \text{Hom}_{\mathscr{D}}(Y,Z)$ and $\xi' \in \text{Hom}_{\mathscr{D}}(X, \Sigma Y)$.
\end{rem}

\end{defn}

We are now ready to give an alternative definition of the Hom-Ext quiver:

\begin{defn}\label{defn: Hom-Ext quiver in repQ}
Let $\chi = \{X_1, X_2, \dots, X_m\}$ be a collection of {\color{black} indecomposable} $\Bbbk Q$-modules. Here, we use the notation that rHom$(X_i,X_j)$ is the subspace of Hom$(X_i,X_j)$ generated by all maps $X_i\to X_j$ which factor through some {\color{black} $X_q \in \chi$} for $q\neq i,j$ {\color{black} or through some radical endomorphism of $X_i$ or $X_j$}. We denote the \textbf{Hom-Ext quiver of $\chi$} by $(Q^{\chi},R) = ((Q_0^{\chi},Q_1^{\chi},s,t),R)$ and define it as follows. \\

The vertices of $Q^{\chi}$ are the modules in $\chi$; that is, $Q_0^{\chi} = \chi$. For $i,j \in \{1,2,\dots, m\}$, there is an arrow from $X_i$ to $X_j$ if \textit{any} of the following hold:


\begin{enumerate}

{\color{black}
\item There is an arrow from $X_i$ to $X_j$ if and only if there is an arrow from $X_i$ to $X_j$ in the quiver of End$_{\Bbbk Q}(\chi)$. In other words, fix a subset $\{f_1, f_2, \dots, f_k\}$ of Hom$(X_i,X_j)$ which {\color{black}for $i=j$ lie in rad$(\text{Hom}(X_i,X_i))$ and forms a basis in the quotient rad$(\text{Hom}(X_i,X_j))/\text{rHom}(X_i,X_j)$. {\color{black} Moreover,} for $i\neq j$, $\{f_1, f_2, \dots, f_k\}$ map to a basis for Hom$(X_i,X_j)/\text{rHom}(X_i,X_j)$}. Then there is an arrow in $Q_1^{\chi}$ from $X_i$ to $X_j$ for each $f_l \in \{f_1,f_2, \dots, f_k\}$ which is a morphism from $X_i$ to $X_j$. 

{\color{black}
\item Similarly, let $\{\xi_1, \xi_2, \dots, \xi_k\}$ be a basis for Ext$(X_i,X_j)$ modulo the subspace of Ext$(X_i,X_j)$ spanned by all extensions which pull back from an extension in $\text{Ext}(X_i, X_q)$ or push forward to some extension in $\text{Ext}(X_q,X_j)$ {\color{black}for some $X_q \in \chi$}, where $q\neq i,j$ {\color{black}or which are pull backs or push forwards of extensions in Ext$(X_i,X_j)$ along radical endomorphisms of $X_i$ or $X_j$}. For each $l\in\{1,2,\dots,k\}$ so that $\xi_l \in \text{Ext}(X_i,X_j)$, there is an arrow from $X_i$ to $X_j$ in $Q^{\chi}$. 
}
}

\end{enumerate}

In order to define the relations, we will describe what is meant by compositions of arrows in $Q^{\chi}$. We have the following three cases, where the arrows decorated with  a 0 (1) correspond to a morphism (extension): \\

\noindent
Case 1: \begin{xymatrix}{X \ar[r]^f_0 & Y \ar[r]^g_0 & Z}\end{xymatrix} \\

In this case, the composition of $f$ and $g$ give a morphism $g \circ f$. \\

\noindent
Case 2: \begin{xymatrix}{X \ar[r]^f_1 & Y \ar[r]^g_0 & Z}\end{xymatrix}\\

By definition of the Hom arrows, the composition of the arrows $f$ and $g$ can't be a morphism, and hence must be an extension or zero. In this case, we define the composition $g \circ f$ as the push-forward of the extension $f$ along the morphism $g$. \\

\noindent
Case 3: \begin{xymatrix}{X \ar[r]^f_0 & Y \ar[r]^g_1 & Z}\end{xymatrix} \\

Like in the previous case, the composition of $f$ and $g$ must be an extension. In this case, we define the composition $g \circ f$ as the pull-back of the extension $g$ along the morphism $f$. \\

\noindent
Case 4: \begin{xymatrix}{X \ar[r]^f_1 & Y \ar[r]^g_1 & Z}\end{xymatrix} \\

The composition of $f$ and $g$ in this case is always zero since $\Bbbk Q$ is hereditary. \\

We define compositions of finitely many arrows recursively. The set of relations $R$ is generated by the minimal paths that compose to zero and linear combinations of paths that equal zero.

\end{defn}

{\color{black} 
\begin{rem}
    We note that the Hom-Ext quiver isn't new to representation theory. It is the graded quiver associated to the Ext algebra, also called the Yoneda algebra, which is important in the study of $A_{\infty}$ structures. See \cite{keller2001introduction} and \cite{lu2009infinity} to name a couple appearances of the Ext algebra in the study of $A_{\infty}$ structures. What we believe is new though, is the application of the {\color{black}quiver of the} Ext algebra to the study of  exceptional sequences/sets.
\end{rem}
}

\begin{lem}
The definitions of the Hom-Ext quiver in Definitions \ref{defn: H-E quiver in D} and \ref{defn: Hom-Ext quiver in repQ} are equivalent.
\end{lem}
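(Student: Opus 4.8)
The plan is to establish a dictionary between the two constructions at three levels---vertices, arrows, and relations---and verify that the dictionary respects all the data. The fundamental bridge is the pair of observations recorded in Remarks \ref{rem: push forward in D} and \ref{rem: pull back in D}: an extension $\xi \in \text{Ext}(X_i,X_j)$ is literally an element of $\text{Hom}_{\mathscr{D}}(X_i,\Sigma X_j)$, and pushing forward along a morphism $g$ (resp.\ pulling back along $f$) corresponds exactly to composing with $g$ (resp.\ with $\Sigma f$) in $\mathscr{D}$. Since $\Bbbk Q$ is hereditary, $\text{Hom}_{\mathscr{D}}(X_i, \Sigma^k X_j) = 0$ for $k \neq 0,1$, so $\text{End}_{\mathscr{D}}(\chi \oplus \Sigma\chi)$ decomposes precisely into the $\text{Hom}$ part (degree $0$) and the $\text{Ext}$ part (degree $1$), matching the two types of arrows in Definition \ref{defn: Hom-Ext quiver in repQ}.

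First I would set up the vertex correspondence: the vertices of $Q^{\chi}$ in Definition \ref{defn: Hom-Ext quiver in repQ} are the modules $X_i$, while in Definition \ref{defn: H-E quiver in D} they are the $\sim_0$-classes $\{X_i, \Sigma X_i\}$, and these biject by $X_i \leftrightarrow [X_i]$. Next I would match arrows. After collapsing by $\sim$, an arrow of $Q^E$ of type $\alpha$ (both endpoints in $\chi$) or $\Sigma\alpha$ (both in $\Sigma\chi$) becomes a single Hom-arrow, and the irreducible morphisms generating $\text{rad}/\text{rad}^2$ of $\text{End}_{\Bbbk Q}(\chi)$ are exactly the morphisms not factoring through a third object or a radical endomorphism---this is the content of the quotient by $\text{rHom}$ in part (1). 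The $\beta$-arrows (from $\chi$ to $\Sigma\chi$) become the Ext-arrows, and the claim is that the irreducible ones---those surviving in $\text{rad}/\text{rad}^2$ of the endomorphism algebra---are precisely the extensions that neither pull back from $\text{Ext}(X_i,X_q)$ nor push forward to $\text{Ext}(X_q,X_j)$, nor arise via radical endomorphisms, matching part (2). Here I would invoke the standard fact that $\text{rad}^2$ of an endomorphism algebra is spanned by morphisms factoring through other indecomposable summands, translating "factoring through $\Sigma X_q$ or $X_q$" in $\mathscr{D}$ into exactly the push-forward/pull-back conditions.

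Having matched arrows, I would check that compositions agree. The four cases in Definition \ref{defn: Hom-Ext quiver in repQ} are exactly the four ways to compose degree-$0$ and degree-$1$ morphisms in $\text{End}_{\mathscr{D}}(\chi \oplus \Sigma\chi)$: Case 1 is ordinary composition of maps, Cases 2 and 3 are the push-forward and pull-back (which by the Remarks coincide with composition in $\mathscr{D}$), and Case 4 gives $0$ because it lands in $\text{Hom}_{\mathscr{D}}(X_i, \Sigma^2 X_k) = 0$ by heredity. Since both definitions declare $R$ to be generated by the paths mapping to zero (equivalently, the relations of the quotiented endomorphism algebra), the ideals coincide once the arrow-and-composition dictionary is fixed. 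I would then conclude that the two quivers-with-relations are isomorphic.

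I expect the main obstacle to be the careful bookkeeping in the arrow correspondence for the Ext-arrows, specifically verifying that "lies in $\text{rad}^2$ of $\text{End}_{\mathscr{D}}(\chi\oplus\Sigma\chi)$" is equivalent to the precise list of pull-back/push-forward conditions (including the subtle clauses about radical endomorphisms of $X_i$ or $X_j$ themselves). The radical-endomorphism clauses are what handle factorizations through $X_i \to X_i$ or $X_j \to X_j$ that are not isomorphisms, and matching these to the $\sim$-collapsed structure---where $X_i$ and $\Sigma X_i$ are identified but their internal radical endomorphisms persist---requires care to ensure no spurious arrows survive or are lost. The rest of the proof is a routine translation once this equivalence of "irreducibility" notions is pinned down.
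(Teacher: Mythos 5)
Your proposal is correct and amounts to carrying out in detail exactly what the paper's one-line proof asserts, namely that both definitions describe the quiver of the Ext (Yoneda) algebra $\operatorname{End}_{\mathscr{D}}(\chi\oplus\Sigma\chi)$, with the vertex identification under $\sim_0$, the matching of $\beta$-arrows with irreducible extensions via Remarks \ref{rem: push forward in D} and \ref{rem: pull back in D}, and the vanishing of degree-$\geq 2$ compositions by heredity. The paper simply states this equivalence without proof, so your verification is a fleshed-out version of the same argument rather than a different route.
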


\begin{proof}
These are both equivalent definitions of the quiver associated to the Ext algebra.
\end{proof}

\subsection{Applications to exceptional sets}\label{sec: H-E quiver counts excepitonal orderings}

Now that we have two equivalent definitions of the Hom-Ext quiver, we will analyze its applications to the study of exceptional sets. Namely, our goal for this subsection is to show that the Hom-Ext quiver can be used to detect exceptionality of a set of modules, and moreover, that it can be used to compute how many exceptional orderings there are of a fixed exceptional set. 

\begin{lem}\label{lem: Hom-Ext quiver acyclic}
Let $\chi = \{X_1, X_2, \dots, X_n\}$ be a set of representations of $Q$ where $|Q_0| = n$. If $\chi$ is an exceptional set then $Q^{\chi}$ has no loops or cycles.
\end{lem}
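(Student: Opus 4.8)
The plan is to use the total order furnished by an exceptional ordering of $\chi$ and to check that every arrow of $Q^\chi$ must respect this order strictly; this simultaneously rules out loops and oriented cycles. Concretely, since $\chi$ is an exceptional set we may relabel its elements so that $(X_1, X_2, \dots, X_n)$ is an exceptional sequence, meaning $\text{Hom}(X_i,X_j) = 0 = \text{Ext}(X_i,X_j)$ whenever $j < i$. Contrapositively, a nonzero morphism in $\text{Hom}(X_a,X_b)$ or a nonzero extension in $\text{Ext}(X_a,X_b)$ forces $a \le b$. This single observation is the engine of the proof; the rest is bookkeeping over the two types of arrows.

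First I would dispose of loops. By Definition \ref{defn: Hom-Ext quiver in repQ}, an arrow from a vertex $X_i$ to itself can only come from a radical endomorphism of $X_i$ (the Hom-type case with $i=j$) or from a nonzero class in $\text{Ext}(X_i,X_i)$ (the Ext-type case). But each $X_i$ is exceptional, so $\text{Ext}(X_i,X_i)=0$; and by the Happel--Ringel Lemma (Lemma \ref{lem: Happel-Ringel Lemma}), $\text{End}(X_i)=\text{Hom}(X_i,X_i)$ is a division ring, which over the algebraically closed field $\Bbbk$ is just $\Bbbk$ and hence has trivial radical. Thus there are neither radical endomorphisms nor self-extensions at any vertex, so $Q^\chi$ has no loops.

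Next I would rule out oriented cycles by a strict-monotonicity argument. Every arrow $X_a \to X_b$ of $Q^\chi$ arises, again by Definition \ref{defn: Hom-Ext quiver in repQ}, from a nonzero morphism in $\text{Hom}(X_a,X_b)$ or a nonzero extension in $\text{Ext}(X_a,X_b)$; by the first step such an arrow has $a \neq b$, and by the exceptional-sequence vanishing above it therefore satisfies $a < b$. Consequently every arrow strictly increases the index, so along any oriented path $X_{a_1} \to X_{a_2} \to \cdots$ the indices strictly increase. An oriented cycle $X_{a_1} \to \cdots \to X_{a_k} \to X_{a_1}$ would then yield $a_1 < a_1$, a contradiction, so no oriented cycle exists.

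I do not expect a genuine obstacle here: the only subtlety is making sure that arrows of \emph{both} Hom- and Ext-type are accounted for and that each really does witness a nonzero morphism or extension, together with invoking exceptionality (through Happel--Ringel and algebraic closedness of $\Bbbk$) to kill the loop case. One should note that ``cycle'' is understood in the oriented sense, as is standard for quivers and as is needed for the later interpretation of $Q^\chi$ as defining a poset (cf.\ Theorem \ref{thm A}); the monotonicity argument addresses precisely oriented cycles and would not, by itself, say anything about underlying unoriented cycles.
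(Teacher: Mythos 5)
Your proof is correct and follows essentially the same route as the paper's: exceptionality of each $X_i$ (via Happel--Ringel and $\operatorname{Ext}(X_i,X_i)=0$) kills loops, and the fact that every arrow $X_a\to X_b$ witnesses a nonzero morphism or extension forces $X_a$ to precede $X_b$ in any exceptional ordering, which is incompatible with an oriented cycle. Your phrasing in terms of strict monotonicity of indices along a fixed exceptional ordering is just a repackaging of the paper's "comes before and after" contradiction.
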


\begin{proof}

Suppose that $\chi$ is an exceptional set. Then $X_i$ is exceptional for all $i$, hence Hom$(X_i,X_i) \cong \Bbbk$ and Ext$(X_i,X_i) =0$, allowing us to conclude that $Q^{\chi}$ has no loops. Now, suppose $Q^{\chi}$ has an oriented cycle: $X_i \rightarrow X_{i+1} \rightarrow \cdots \rightarrow X_j \rightarrow X_i$. Then there is a nontrivial morphism or extension from $X_k$ to $X_{k+1}$, for all $k \in \{i, i+1, \dots, j-1\}$. Thus, in any exceptional ordering, we have that $X_j$ must come after $X_i$. On the other hand, since there is an arrow from $X_j$ to $X_i$, we have that $X_j$ must come before $X_i$ in any exceptional ordering. Therefore, $\chi$ is not an exceptional collection and we have the result by contraposition.
\end{proof}

In order to compute the number of exceptional sequences associated to a fixed exceptional collection, we need to introduce a partial order on the Hom-Ext quiver.

\begin{defn}\label{defn: partial order}
{\color{black} Let $Q^{\chi}$ be the Hom-Ext quiver associated to a set $\chi = \{X_1, X_2, \dots, X_m\}$ of representations of $Q$.} We define a partial order on $Q^{\chi}_0$ by fixing $X_i \leq_e X_j$ if and only if there is a path (which can have relations and can be the lazy path) from $X_i$ to $X_j$.
\end{defn}

\begin{lem}\label{lem: partial order well-defined}
In the case that $Q^{\chi}$ is finite and acyclic, the partial order in Definition \ref{defn: partial order} is well-defined.
\end{lem}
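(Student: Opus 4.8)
The plan is to verify directly that $\leq_e$ satisfies the three defining properties of a partial order---reflexivity, transitivity, and antisymmetry---on the vertex set $Q_0^{\chi} = \chi$. Since the vertices of $Q^{\chi}$ are literally the modules $X_1, \dots, X_m$, there is no question of choosing representatives, so ``well-defined'' here amounts precisely to checking these three axioms.

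Reflexivity and transitivity require no hypothesis on $Q^{\chi}$ at all. For reflexivity, Definition \ref{defn: partial order} explicitly permits the lazy path, which is a path from $X_i$ to $X_i$, so $X_i \leq_e X_i$. For transitivity, if $X_i \leq_e X_j$ and $X_j \leq_e X_k$, I would pick a path $p$ from $X_i$ to $X_j$ and a path $q$ from $X_j$ to $X_k$; their concatenation is again a path from $X_i$ to $X_k$, whence $X_i \leq_e X_k$. Here I use the interpretation, built into Definition \ref{defn: partial order}, that $\leq_e$ records the existence of \emph{any} path in the quiver (paths ``can have relations''), i.e.\ reachability in the underlying directed graph. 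Consequently concatenation is unproblematic, and I need not track whether the composite path is nonzero in $\Bbbk Q^{\chi}/R$.

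The sole place the hypotheses enter is antisymmetry, and this is where I expect the only (mild) subtlety to lie. Suppose $X_i \leq_e X_j$ and $X_j \leq_e X_i$ with $i \neq j$. Because $i \neq j$, neither witnessing path can be the lazy path, so each has length at least one. Concatenating a nontrivial path from $X_i$ to $X_j$ with a nontrivial path from $X_j$ back to $X_i$ then produces an oriented cycle through $X_i$ in $Q^{\chi}$, contradicting the assumed acyclicity of $Q^{\chi}$. Hence $i = j$, which is antisymmetry. The point requiring care is exactly this observation that distinct indices force both paths to have positive length, so that their concatenation is a genuine oriented cycle rather than a trivial loop. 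By Lemma \ref{lem: Hom-Ext quiver acyclic} this acyclicity hypothesis is automatically satisfied whenever $\chi$ is an exceptional set, which is the case of interest. The finiteness hypothesis plays no role in the three axioms; acyclicity is the essential ingredient, and it is used exactly once, to rule out oriented cycles.
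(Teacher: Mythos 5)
Your proof is correct and follows essentially the same route as the paper's: reflexivity via the lazy path, transitivity via concatenation, and antisymmetry by noting that two nontrivial paths in opposite directions would form an oriented cycle, contradicting acyclicity. Your additional remarks — that distinct indices force both witnessing paths to have positive length, and that finiteness is not actually used — are accurate refinements of the same argument.
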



\begin{proof} Let $\chi$ be a set of $Q$ representations and $Q^{\chi}$ be the corresponding Hom-Ext quiver. Moreover, suppose that $Q^{\chi}$ is finite and acyclic. 

\begin{enumerate}
\item $X_i \leq_e X_i$ since we have a lazy path $e_i$.
\item Suppose $X_i \leq_e X_j$ and $X_j \leq_e X_i$. Then there is a path from $X_i$ to $X_j$ and one back from $X_j$ to $X_i$. Since we assume $Q^{\chi}$ to be finite and acyclic, these two paths must be the lazy path, hence $X_i = X_j$.
\item Suppose $X_i \leq_e X_j$ and $X_j \leq_e X_k$. Then combining the path from $X_i$ to $X_j$ with the path from $X_j$ to $X_k$ gives a path from $X_i$ to $X_k$, hence $X_i \leq_e X_k$.
\end{enumerate}
\end{proof}

To study the number of exceptional sequences associated to an exceptional collection, we will need to study the total orderings of the vertices of the Hom-Ext quiver that respect the partial order in Definition \ref{defn: partial order}. Formally these total orders are called linear extensions.

\begin{defn}
Let $\leq$ and $\leq'$ be partial orders on a set $S$. We call $\leq'$ a \textbf{linear extension} of $\leq$ when {\color{black} \textit{both}} of the following hold:

\begin{enumerate}
\item $\leq'$ is a total order {\color{black} on $S$}.
\item For every $s_1,s_2 \in S$, if $s_1 \leq s_2$, then $s_1 \leq' s_2$.
\end{enumerate}
\end{defn}

\begin{thm}\label{thm: linear extensions and exceptional sequences}
Let $\chi$ be an exceptional collection. The exceptional orderings of $\chi$ are in bijection with the linear extensions of $\leq_e$. 
\end{thm}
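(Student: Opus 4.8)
The plan is to exhibit the asserted bijection as the identity on the set of total orders of $\chi$: I will show that a total order of $\chi$ arises from an exceptional ordering if and only if it is a linear extension of $\leq_e$. Recall that $(E_1,\dots,E_n)$ is an exceptional ordering precisely when $\mathrm{Hom}(E_i,E_j)=0=\mathrm{Ext}(E_i,E_j)$ for all $j<i$; equivalently, whenever there is a nonzero morphism or extension $X\to Y$ between distinct modules of $\chi$, the module $X$ must precede $Y$. Since $\chi$ is exceptional, Lemma~\ref{lem: Hom-Ext quiver acyclic} guarantees that $Q^\chi$ is acyclic, so $\leq_e$ is a genuine partial order by Lemma~\ref{lem: partial order well-defined} and its linear extensions are defined.

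The crux is the following Key Lemma: for distinct $X,Y\in\chi$ with $\mathrm{Hom}(X,Y)\neq 0$ or $\mathrm{Ext}(X,Y)\neq 0$, there is a path from $X$ to $Y$ in $Q^\chi$, i.e. $X\leq_e Y$. The quickest route uses that $Q^\chi$ is by construction the Gabriel quiver of the Ext (Yoneda) algebra $E=\mathrm{Hom}(\chi,\chi)\oplus\mathrm{Ext}(\chi,\chi)$, so that $\Bbbk Q^\chi/R\cong E$. A nonzero morphism or extension between the distinct modules $X$ and $Y$ is a nonzero element of the corresponding idempotent-truncation of $\mathrm{rad}\,E$; since $E$ is a quotient of the path algebra $\Bbbk Q^\chi$, the space of paths from $X$ to $Y$ in $Q^\chi$ must be nonzero, and in particular such a path exists. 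Alternatively one argues by induction following Definition~\ref{defn: Hom-Ext quiver in repQ}: a nonzero morphism either survives modulo $\mathrm{rHom}(X,Y)$, giving an arrow, or factors through some $X_q\in\chi$; a nonzero extension either gives an Ext-arrow, or is a pullback/pushforward through some $X_q$, which by Remarks~\ref{rem: push forward in D} and \ref{rem: pull back in D} is an honest composition $X\to X_q\to Y$ in $\mathscr D$. Either way the induction assembles a path.

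Granting the Key Lemma, the two inclusions are immediate. If $(E_1,\dots,E_n)$ is an exceptional ordering and $X\leq_e Y$, choose a path $X=Z_0\to\cdots\to Z_r=Y$; each arrow is a nonzero morphism or extension, so each $Z_k$ precedes $Z_{k+1}$ in the sequence, and transitivity forces $X$ before $Y$. Thus the ordering extends $\leq_e$. Conversely, let $\prec$ be a linear extension, listed as $F_1\prec\cdots\prec F_n$; if it failed to be exceptional there would be $j<i$ with a nonzero morphism or extension $F_i\to F_j$, whence $F_i\leq_e F_j$ by the Key Lemma and therefore $F_i\preceq F_j$, contradicting $j<i$. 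So every linear extension is an exceptional ordering, and the two sets of total orders coincide, giving the bijection.

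The main obstacle is the Key Lemma, namely showing that every nonzero morphism or extension is \emph{witnessed} by an actual path in $Q^\chi$, rather than being lost because only irreducible maps and extensions are recorded as arrows. The content is that the Hom-Ext quiver genuinely is the Gabriel quiver of the Ext/Yoneda algebra, so that passing to $\mathrm{rad}/\mathrm{rad}^2$ loses no reachability information; the care needed lies in tracking that the factorizations used in the definition (via $\mathrm{rHom}$ and via pullback/pushforward of extensions) correspond to the honest compositions of arrows described in the three composition cases of Definition~\ref{defn: Hom-Ext quiver in repQ}. Finally, the ``(which can have relations)'' clause in $\leq_e$ is harmless: a path may well compose to zero in $E$, but for both inclusions I use only that each individual arrow forces a precedence, never that the whole path is a nonzero element.
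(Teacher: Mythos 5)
Your proposal is correct and follows essentially the same route as the paper: both directions reduce to the observation that a nonzero morphism or extension between distinct members of $\chi$ forces a path in $Q^{\chi}$ (your Key Lemma), and that each arrow on a path forces a precedence in any exceptional ordering. The only difference is that you justify the Key Lemma explicitly via the Yoneda-algebra/Gabriel-quiver description, whereas the paper simply asserts the existence of such a path; your added care here is sound and harmless.
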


\begin{proof}
Since $\chi$ is an exceptional set, it follows from Lemmas \ref{lem: Hom-Ext quiver acyclic} and \ref{lem: partial order well-defined} that $\leq_e$ is a well-defined partial order. Let $(X_1, X_2, \dots, X_n)$ be an exceptional ordering of $\chi$ and suppose that for some $j > i$, we have $X_j \leq_e X_i$. Then there is a path in $Q^{\chi}$ from $X_j$ to $X_i$, allowing us to conclude that $X_j$ must come before $X_i$ in any exceptional sequence, a contradiction. {\color{black} Thus, the exceptional ordering $(X_1, X_2, \dots, X_n)$ gives a linear extension $X_1 \leq' X_2 \leq' \cdots \leq' X_n$.}

On the other hand, suppose $\leq'$ is a linear extension of $\leq_e$ and that $(X_{i_1}, X_{i_2}, \dots, X_{i_n})$ is the ordering of the vertices of $Q^{\chi}$ given by $\leq'$. For some distinct $k,l \in \{1,2,\dots,n\}$, suppose Hom$(X_{i_l},X_{i_k}) \neq 0$ or Ext$(X_{i_l}, X_{i_k}) \neq 0$ where $X_{i_k} \leq' X_{i_l}$. Then there is a (relation avoiding) path in $Q^{\chi}$ from $X_{i_l}$ to $X_{i_k}$. Thus $X_{i_l} \leq_e X_{i_k}$, which contradicts the fact that $\leq'$ is a linear extension of $\leq_e$. {\color{black} Thus each linear extension $\leq'$ of $\leq_e$ gives an exceptional ordering of $\chi$.}
\end{proof}

\begin{cor}
Let $\chi = \{X_1, X_2, \dots, X_n\}$ be a set of representations of $Q$ where $|Q_0| = n$. Then $\chi$ is an exceptional set if and only if $Q^{\chi}$ has no loops or cycles.
\end{cor}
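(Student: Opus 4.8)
The plan is to prove the two implications separately, leaning on the results already in hand. The forward direction is immediate: Lemma~\ref{lem: Hom-Ext quiver acyclic} states exactly that if $\chi$ is an exceptional set then $Q^{\chi}$ has no loops or cycles. So the real content is the converse, and I would establish it by verifying the two requirements in the definition of an exceptional set---that each $X_i$ be exceptional, and that $\chi$ admit an exceptional ordering---each of which I expect to be governed by one of the two forbidden features (loops govern exceptionality, cycles govern orderability).

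First I would show that the absence of loops forces every $X_i$ to be exceptional. By definition an indecomposable module is exceptional if and only if it has no self-extensions, so since each $X_i$ is indecomposable by hypothesis, it suffices to show $\mathrm{Ext}(X_i,X_i) = 0$ for every $i$. If instead $\mathrm{Ext}(X_i,X_i) \neq 0$, then by the Ext-clause of Definition~\ref{defn: Hom-Ext quiver in repQ} a nonzero self-extension that is not absorbed by pullbacks or pushforwards through the remaining vertices of $\chi$ or through a radical endomorphism of $X_i$ contributes a loop at $X_i$. Contrapositively, having no loops forces $\mathrm{Ext}(X_i,X_i)=0$, so each $X_i$ is exceptional.

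Next, assuming in addition that $Q^{\chi}$ has no cycles, it is a finite acyclic quiver, so Lemma~\ref{lem: partial order well-defined} makes the relation $\leq_e$ of Definition~\ref{defn: partial order} a genuine partial order on the finite set $Q_0^{\chi} = \chi$, and every finite poset admits a linear extension $X_{i_1} \leq' \cdots \leq' X_{i_n}$. I would then reuse the second half of the proof of Theorem~\ref{thm: linear extensions and exceptional sequences} verbatim: for $k < l$, any nonzero morphism or extension $X_{i_l} \to X_{i_k}$ yields a relation-avoiding path in $Q^{\chi}$, hence $X_{i_l} \leq_e X_{i_k}$, contradicting that $\leq'$ extends $\leq_e$; thus $\mathrm{Hom}(X_{i_l},X_{i_k}) = 0 = \mathrm{Ext}(X_{i_l},X_{i_k})$ whenever $k < l$. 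Together with the exceptionality of each module from the previous step, $(X_{i_1}, \dots, X_{i_n})$ is then an exceptional sequence, so $\chi$ is an exceptional set.

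I expect the main obstacle to be the first step: carefully justifying that a nonzero self-extension of an indecomposable $X_i$ genuinely produces a loop rather than being absorbed entirely by factorizations through the other members of $\chi$ and radical endomorphisms of $X_i$. The cleanest route is probably to pass to Definition~\ref{defn: H-E quiver in D} and the identification of $Q^{\chi}$ with the Gabriel quiver of $\mathrm{End}_{\mathscr{D}}(\chi \oplus \Sigma\chi)$, where a loop at the idempotent $e_i$ is precisely a nonzero class in $\mathrm{rad}/\mathrm{rad}^2$ supported there, and a self-extension $X_i \to \Sigma X_i$ lies in $\mathrm{rad}^2$ exactly when it factors as a morphism followed by an extension, or an extension followed by a morphism, through the collection; the Happel--Ringel Lemma (Lemma~\ref{lem: Happel-Ringel Lemma}) moreover guarantees that once self-extensions are excluded one automatically has $\mathrm{End}(X_i)=\Bbbk$, so no Hom-loop can occur either. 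The cycle half, by contrast, is a routine consequence of Theorem~\ref{thm: linear extensions and exceptional sequences} and elementary poset theory.
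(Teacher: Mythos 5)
Your proof follows the same route as the paper's: the forward direction is Lemma~\ref{lem: Hom-Ext quiver acyclic}, and for the converse you take a linear extension of $\leq_e$ (well-defined by Lemma~\ref{lem: partial order well-defined}) and rerun the second half of the proof of Theorem~\ref{thm: linear extensions and exceptional sequences} to get the required Hom/Ext vanishing. You are in fact more careful than the paper here: the paper's converse simply cites Theorem~\ref{thm: linear extensions and exceptional sequences}, whose statement presupposes that $\chi$ is already exceptional, so the step you isolate---that each $X_i$ individually has no self-extensions---is a genuine gap in the published argument that your write-up fills. One small correction to your first step: the absence of loops alone does \emph{not} force $\mathrm{Ext}(X_i,X_i)=0$, because a nonzero self-extension could lie entirely in $\mathrm{rad}^2$ of $\mathrm{End}_{\mathscr{D}}(\chi\oplus\Sigma\chi)$, i.e.\ be absorbed by factorizations through other members of $\chi$; but then, since the radical is nilpotent, it is a sum of nonzero paths of length $\ge 2$ from $[X_i]$ back to $[X_i]$, which is a cycle. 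So the correct statement is that \emph{no loops and no cycles together} rule out self-extensions (a length-one path is a loop, a longer one is a cycle); since both hypotheses are available in the corollary, your argument closes once you phrase it this way, and the Happel--Ringel Lemma then gives $\mathrm{End}(X_i)=\Bbbk$ as you say.
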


\begin{proof}
The forward direction is Lemma \ref{lem: Hom-Ext quiver acyclic}. Conversely, suppose $Q^{\chi}$ has no loops or cycles. Then by Lemma \ref{lem: partial order well-defined}, we have a well-defined partial order $\leq_e$. Pick a linear extension $\leq'$ of $\leq_e$. Then the ordering of $Q_0^{\chi}$ with respect to $\leq'$ is an exceptional sequence {\color{black} by Theorem \ref{thm: linear extensions and exceptional sequences}.}
\end{proof}

Theorem \ref{thm: linear extensions and exceptional sequences} provides an interesting relationship between combinatorics and representation theory. Namely, it connects Young-tableaux, linear extensions, and parking functions with the number of exceptional sequences associated to an exceptional collection. A lot of exciting {\color{black} research in combinatorics} has been done on these topics. For instance, in 1991, it was proven that the problem of counting linear extensions is $\#P$-complete \cite{brightwell1991counting}. There are connections between the number of linear extensions and the number of standard Young-tableaux of skew-shape \cite{morales2018asymptotics}. Moreover, in \cite{morales2018asymptotics}, the asymptotics of these Young-tableaux are studied. It would be interesting to see if there is a representation theoretic analogue {\color{black} of these asymptotics}. There is also a relationship between pattern-avoiding parking functions and certain Young-tableaux \cite{garcia2024defective}. We believe it is the case that not all Hom-Ext quivers of exceptional collections give rise to Young-diagrams of skew shape; however, that they give rise to `Tetris-like' diagrams. We plan to study these diagrams in our future work. 

{\color{black}
Moreover, there is a relationship between the structure of the Hom-Ext quiver and relative projectivity/injectivity in exceptional sets of modules over finite dimensional hereditary algebras. In a project in progress, the second author along with Sarah Levine study this relationship, and in particular, show that the Hom-Ext quiver can be used to completely determine relative projectivity and injectivity in exceptional collections in type $\mathbb{A}$.
}

\section{A geometric realization of the Hom-Ext quiver in type $\tilde{\mathbb{A}}$} \label{sec: H-E quiver is tiling algebra}

In this section, we will show that for exceptional collections in type $\tilde{\mathbb{A}}$, Hom-Ext quivers {\color{black} of exceptional sets} can be seen in the corresponding exceptional arc diagrams. Moreover, we will study how the Hom-Ext quiver behaves relative to twists of the annulus. Throughout the section, unless otherwise stated, let $Q^{\bm{\varepsilon}}$ be a quiver of type $\tilde{\mathbb{A}}_{n-1}$ and let $A_{Q^{\bm{\varepsilon}}}$ be the corresponding annulus with $p$ marked points on the inner boundary component, and $q$ on the outer where $p + q = n$. 

Let $x$ be a marked point {\color{black} on a boundary component of }$A_{Q^{\bm{\varepsilon}}}$ and $D_{\chi}$ an exceptional arc diagram. On the same boundary component as $x$, suppose $m$ and $m'$ are two non-marked points such that $x$ is the only marked point between them. Draw an arc $c$ with $m$ and $m'$ as endpoints and winding number zero that passes over the unique marked point between $m$ and $m'$, and is therefore  homotopic to the boundary component between them. Following the notation in \cite{baur2021geometric}, we call the sequence of arcs $\gamma_i, \gamma_{i+1}, \dots, \gamma_h \in D_{\chi}$ which $c$ crosses in the clockwise order the \textbf{complete fan at $x$}. We say that $\gamma_i$ is \textbf{immediately clockwise} of $\gamma_j$ if there exists some marked point $x$ such that $\gamma_i$ immediately follows $\gamma_j$ in the complete fan at $p$ (in this case, we also say that $\gamma_j$ is immediately counter clockwise of $\gamma_i$). We also call $\gamma_j$ and $\gamma_i$ \textbf{immediate successors}.

\begin{exmp}\label{exam: complete fan}

Consider the quiver $Q^{\bm{\varepsilon}}$ where $\bm{\varepsilon} = (+,-,+,-)$: 

\[
\xymatrix{
1 \ar_{\alpha_1}[r] \ar@/^2pc/^{\alpha_4}[rrr] & 2 & 3 \ar^{\alpha_2}[l] \ar_{\alpha_3}[r] & 4 
	}
\]

Then we have an exceptional collection given by $\{(4,2;0), (1,3;0), (4,3;0), (3,4;0)\}$. Below is the exceptional arc diagram.

\begin{center}  
\tikzset{every picture/.style={line width=0.75pt}} 
\begin{tikzpicture}[x=0.75pt,y=0.75pt,yscale=-1,xscale=1]
\draw   (272.01,143.25) .. controls (272.01,125.19) and (287.06,110.55) .. (305.64,110.55) .. controls (324.21,110.55) and (339.27,125.19) .. (339.27,143.25) .. controls (339.27,161.31) and (324.21,175.95) .. (305.64,175.95) .. controls (287.06,175.95) and (272.01,161.31) .. (272.01,143.25)(222.96,143.25) .. controls (222.96,98.1) and (259.97,61.5) .. (305.64,61.5) .. controls (351.3,61.5) and (388.32,98.1) .. (388.32,143.25) .. controls (388.32,188.4) and (351.3,225) .. (305.64,225) .. controls (259.97,225) and (222.96,188.4) .. (222.96,143.25) ;
\draw [color={rgb, 255:red, 22; green, 48; blue, 226 }  ,draw opacity=1 ]   (305.64,110.55) .. controls (398.21,33) and (389.22,316.5) .. (222.96,143.25) ;
\draw [color={rgb, 255:red, 208; green, 2; blue, 27 }  ,draw opacity=1 ]   (222.96,143.25) .. controls (249.92,109.5) and (264.9,93) .. (305.64,110.55) ;
\draw [color={rgb, 255:red, 144; green, 19; blue, 254 }  ,draw opacity=1 ]   (305.64,110.55) .. controls (368.25,84) and (371.24,201) .. (305.64,177.04) ;
\draw [color={rgb, 255:red, 65; green, 117; blue, 5 }  ,draw opacity=1 ]   (222.96,143.25) .. controls (266.39,238.5) and (356.27,234) .. (388.32,143.25) ;
\draw    (234,102) .. controls (252,117) and (250,181) .. (238,189) ;
\draw    (286,117) .. controls (287,86) and (331,98) .. (322,115) ;

\draw (404.52,133.83) node [anchor=north west][inner sep=0.75pt]    {$1$};
\draw (311.62,152.15) node [anchor=north west][inner sep=0.75pt]    {$2$};
\draw (287.66,119.45) node [anchor=north west][inner sep=0.75pt]    {$4$};
\draw (197.48,134.9) node [anchor=north west][inner sep=0.75pt]    {$3$};
\draw (175.59,34.4) node [anchor=north west][inner sep=0.75pt]    {$\{\textcolor[rgb]{0.56,0.07,1}{( 4,2;0)} ,\ \textcolor[rgb]{0.25,0.46,0.02}{( 1,3;0}) ,\ \textcolor[rgb]{0.09,0.19,0.89}{( 4,3;0)} ,\ \textcolor[rgb]{0.82,0.01,0.11}{( 3,4;0)}\}$};

\draw    (305.64,109.46) ;
\draw [shift={(305.64,109.46)}, rotate = 0] [color={rgb, 255:red, 0; green, 0; blue, 0 }  ][fill={rgb, 255:red, 0; green, 0; blue, 0 }  ][line width=0.75]      (0, 0) circle [x radius= 3.35, y radius= 3.35]   ;
\draw    (388.32,143.25) ;
\draw [shift={(388.32,143.25)}, rotate = 0] [color={rgb, 255:red, 0; green, 0; blue, 0 }  ][fill={rgb, 255:red, 0; green, 0; blue, 0 }  ][line width=0.75]      (0, 0) circle [x radius= 3.35, y radius= 3.35]   ;
\draw    (305.64,177.04) ;
\draw [shift={(305.64,177.04)}, rotate = 0] [color={rgb, 255:red, 0; green, 0; blue, 0 }  ][fill={rgb, 255:red, 0; green, 0; blue, 0 }  ][line width=0.75]      (0, 0) circle [x radius= 3.35, y radius= 3.35]   ;
\draw    (222.96,143.25) ;
\draw [shift={(222.96,143.25)}, rotate = 0] [color={rgb, 255:red, 0; green, 0; blue, 0 }  ][fill={rgb, 255:red, 0; green, 0; blue, 0 }  ][line width=0.75]      (0, 0) circle [x radius= 3.35, y radius= 3.35]   ;
\end{tikzpicture}
\end{center}
\vspace{-2.5 cm}
At vertex 3, we see the arc called $c$ in the above paragraph. The complete fan at $3$ is then $a(3,4)[0], a(4,3)[0], a(1,3)[0]$. Similarly, the complete fan at vertex 4 is $a(3,4)[0], a(4,3)[0], a(4,2)[0]$. The complete fans at vertices 2 and 1 are the singletons $a(4,2)[0]$ and $a(1,3)[0]$ respectively.
\end{exmp}

Using these notions, we can begin exploring the relationship between the Hom-Ext quiver associated to an exceptional set and the corresponding exceptional arc diagram.

\begin{figure}[h!]
\begin{center}

\tikzset{every picture/.style={line width=0.75pt}} 

\begin{tikzpicture}[x=0.75pt,y=0.75pt,yscale=-1,xscale=1]

\draw   (87.01,100.25) .. controls (87.01,82.19) and (102.06,67.55) .. (120.64,67.55) .. controls (139.21,67.55) and (154.27,82.19) .. (154.27,100.25) .. controls (154.27,118.31) and (139.21,132.95) .. (120.64,132.95) .. controls (102.06,132.95) and (87.01,118.31) .. (87.01,100.25)(37.96,100.25) .. controls (37.96,55.1) and (74.97,18.5) .. (120.64,18.5) .. controls (166.3,18.5) and (203.32,55.1) .. (203.32,100.25) .. controls (203.32,145.4) and (166.3,182) .. (120.64,182) .. controls (74.97,182) and (37.96,145.4) .. (37.96,100.25) ;

\draw    (120.64,182) .. controls (93,179.5) and (74,158.5) .. (70,140.5) ;
\draw [color={rgb, 255:red, 208; green, 2; blue, 27 }  ,draw opacity=1 ]   (74,128.5) .. controls (78,149.5) and (82,160.5) .. (120.64,182) ;
\draw   (88.01,298.25) .. controls (88.01,280.19) and (103.06,265.55) .. (121.64,265.55) .. controls (140.21,265.55) and (155.27,280.19) .. (155.27,298.25) .. controls (155.27,316.31) and (140.21,330.95) .. (121.64,330.95) .. controls (103.06,330.95) and (88.01,316.31) .. (88.01,298.25)(38.96,298.25) .. controls (38.96,253.1) and (75.97,216.5) .. (121.64,216.5) .. controls (167.3,216.5) and (204.32,253.1) .. (204.32,298.25) .. controls (204.32,343.4) and (167.3,380) .. (121.64,380) .. controls (75.97,380) and (38.96,343.4) .. (38.96,298.25) ;

\draw   (320.01,100.25) .. controls (320.01,82.19) and (335.06,67.55) .. (353.64,67.55) .. controls (372.21,67.55) and (387.27,82.19) .. (387.27,100.25) .. controls (387.27,118.31) and (372.21,132.95) .. (353.64,132.95) .. controls (335.06,132.95) and (320.01,118.31) .. (320.01,100.25)(270.96,100.25) .. controls (270.96,55.1) and (307.97,18.5) .. (353.64,18.5) .. controls (399.3,18.5) and (436.32,55.1) .. (436.32,100.25) .. controls (436.32,145.4) and (399.3,182) .. (353.64,182) .. controls (307.97,182) and (270.96,145.4) .. (270.96,100.25) ;

\draw   (321.01,298.25) .. controls (321.01,280.19) and (336.06,265.55) .. (354.64,265.55) .. controls (373.21,265.55) and (388.27,280.19) .. (388.27,298.25) .. controls (388.27,316.31) and (373.21,330.95) .. (354.64,330.95) .. controls (336.06,330.95) and (321.01,316.31) .. (321.01,298.25)(271.96,298.25) .. controls (271.96,253.1) and (308.97,216.5) .. (354.64,216.5) .. controls (400.3,216.5) and (437.32,253.1) .. (437.32,298.25) .. controls (437.32,343.4) and (400.3,380) .. (354.64,380) .. controls (308.97,380) and (271.96,343.4) .. (271.96,298.25) ;

\draw [color={rgb, 255:red, 22; green, 48; blue, 226 }  ,draw opacity=1 ]   (81,128.5) .. controls (83,149.5) and (90,148.5) .. (120.64,182) ;
\draw [color={rgb, 255:red, 22; green, 48; blue, 226 }  ,draw opacity=1 ]   (121.64,380) .. controls (152,374.5) and (193,342.5) .. (189,324.5) ;
\draw [color={rgb, 255:red, 208; green, 2; blue, 27 }  ,draw opacity=1 ]   (174,320.5) .. controls (178,341.5) and (158,362.5) .. (121.64,380) ;
\draw [color={rgb, 255:red, 0; green, 0; blue, 0 }  ,draw opacity=1 ]   (161,318.5) .. controls (160,343.5) and (158,351.5) .. (121.64,380) ;
\draw    (353.64,182) .. controls (326,179.5) and (302,161.5) .. (299,139.5) ;
\draw    (354.64,380) .. controls (327,377.5) and (308,356.5) .. (304,338.5) ;
\draw [color={rgb, 255:red, 208; green, 2; blue, 27 }  ,draw opacity=1 ]   (307,128.5) .. controls (311,149.5) and (315,160.5) .. (353.64,182) ;
\draw [color={rgb, 255:red, 22; green, 48; blue, 226 }  ,draw opacity=1 ]   (405,127.5) .. controls (398,153.5) and (396,162.5) .. (353.64,182) ;
\draw [color={rgb, 255:red, 22; green, 48; blue, 226 }  ,draw opacity=1 ]   (415,329.5) .. controls (414,350.5) and (395,365.5) .. (354.64,380) ;
\draw [color={rgb, 255:red, 208; green, 2; blue, 27 }  ,draw opacity=1 ]   (405,321.5) .. controls (404,346.5) and (393,356.5) .. (354.64,380) ;

\draw (122.64,185.4) node [anchor=north west][inner sep=0.75pt]    {$p$};
\draw (355.64,185.4) node [anchor=north west][inner sep=0.75pt]    {$p$};
\draw (356.64,383.4) node [anchor=north west][inner sep=0.75pt]    {$p$};
\draw (123.64,383.4) node [anchor=north west][inner sep=0.75pt]    {$p$};

\draw    (120.64,182) ;
\draw [shift={(120.64,182)}, rotate = 0] [color={rgb, 255:red, 0; green, 0; blue, 0 }  ][fill={rgb, 255:red, 0; green, 0; blue, 0 }  ][line width=0.75]      (0, 0) circle [x radius= 3.35, y radius= 3.35]   ;

\draw    (121.64,380) ;
\draw [shift={(121.64,380)}, rotate = 0] [color={rgb, 255:red, 0; green, 0; blue, 0 }  ][fill={rgb, 255:red, 0; green, 0; blue, 0 }  ][line width=0.75]      (0, 0) circle [x radius= 3.35, y radius= 3.35]   ;

\draw    (353.64,182) ;
\draw [shift={(353.64,182)}, rotate = 0] [color={rgb, 255:red, 0; green, 0; blue, 0 }  ][fill={rgb, 255:red, 0; green, 0; blue, 0 }  ][line width=0.75]      (0, 0) circle [x radius= 3.35, y radius= 3.35]   ;

\draw    (354.64,380) ;
\draw [shift={(354.64,380)}, rotate = 0] [color={rgb, 255:red, 0; green, 0; blue, 0 }  ][fill={rgb, 255:red, 0; green, 0; blue, 0 }  ][line width=0.75]      (0, 0) circle [x radius= 3.35, y radius= 3.35]   ;

\end{tikzpicture}

\end{center}
\caption{The four local possibilities for three arcs to intersect at an endpoint $p$. The arc corresponding to $X_i$ is depicted in blue, $X_k$ in red, and $X_j$ in black.}
\label{fig: options for not immediate successor}
\end{figure}
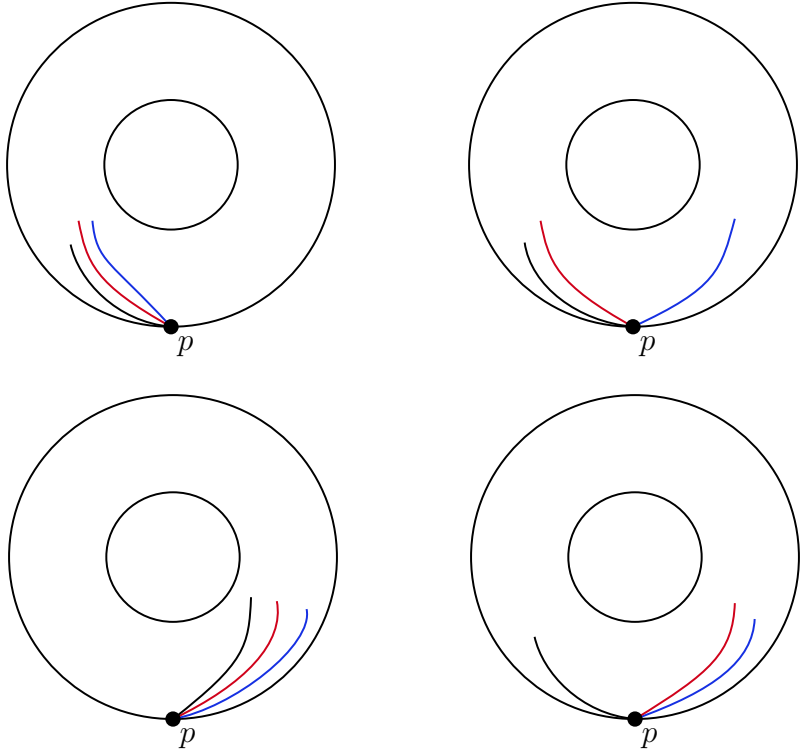

\begin{lem}\label{lem: arrow and immediately clockwise}
Let $\chi = \{X_1, X_2, \dots, X_n\}$ be an exceptional collection of $Q^{\varepsilon}$ representations. There is an arrow $X_i \rightarrow X_j$ in $Q^{\chi}$ if and only if $\gamma_{X_i}$ is immediately clockwise of $\gamma_{X_j}$ at some marked point $p$, where $\gamma_{X_i}$ is the arc corresponding to $X_i$ in the exceptional arc diagram $D_{\chi}$.

\end{lem}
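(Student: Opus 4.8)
The plan is to reduce the whole statement to a single local dictionary at a shared marked point and then read off both implications from the irreducibility built into the definition of $Q^\chi$. First I would record two simplifications coming from exceptionality of $\chi$. Since each $X_i$ is exceptional, Hom$(X_i,X_i)=\Bbbk$ has no radical, so the ``radical endomorphism'' clauses of Definition \ref{defn: Hom-Ext quiver in repQ} are vacuous; an arrow $X_i\to X_j$ thus records precisely a morphism not factoring through any $X_q\in\chi$ with $q\neq i,j$, or an extension not pushing forward to / pulling back through any such $X_q$. Second, because $D_\chi$ is an exceptional arc diagram its distinct arcs do not intersect nontrivially, so by Proposition \ref{prop: clockwise arcs and hom}(4) there are no two-sided graph maps between distinct members of $\chi$; together with Proposition \ref{prop: clockwise arcs and hom}(1) this shows that any nonzero morphism or extension between $X_i$ and $X_j$ forces their arcs to meet at an endpoint, and by Proposition \ref{prop: clockwise arcs and hom}(2) to do so with $\gamma_{X_i}$ clockwise from $\gamma_{X_j}$. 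In particular the mere existence of an arrow $X_i\to X_j$ already puts both arcs in a common fan at some endpoint $p$ with $\gamma_{X_i}$ clockwise from $\gamma_{X_j}$; what is left is to pin down immediacy.

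The crux is the following dictionary, which I would establish and then apply in both directions. \emph{Key Claim:} if $\gamma_{X_i}$ and $\gamma_{X_j}$ share the endpoint $p$ realizing the configuration and $\gamma_{X_i}$ is clockwise from $\gamma_{X_j}$, then for $X_q\in\chi$ with $q\neq i,j$ the morphism (respectively connecting extension) attached to this configuration factors through (respectively pushes forward to / pulls back through) $X_q$ if and only if $\gamma_{X_q}$ lies in the complete fan at $p$ strictly between $\gamma_{X_j}$ and $\gamma_{X_i}$. Granting this, the lemma is immediate: an arrow $X_i\to X_j$ exists if and only if the underlying morphism or extension is irreducible, which by the claim holds if and only if no arc of $D_\chi$ separates $\gamma_{X_j}$ from $\gamma_{X_i}$ in the fan at $p$ --- exactly the statement that $\gamma_{X_i}$ immediately follows $\gamma_{X_j}$, i.e.\ is immediately clockwise of it.

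To prove the Key Claim I would work inside the fan. List the arcs of $D_\chi$ incident to $p$ in clockwise order as $\gamma_{X_j}=\gamma_{a_0},\gamma_{a_1},\dots,\gamma_{a_r}=\gamma_{X_i}$; by Proposition \ref{prop: clockwise arcs and hom}(2) each consecutive pair yields a morphism or extension $X_{a_{s+1}}\to X_{a_s}$, and I would show that these compose, via the composition rules of Cases 1--4 in Definition \ref{defn: Hom-Ext quiver in repQ}, to the map attached to the outer pair $X_i\to X_j$. For Hom--Hom links this is the assertion that two non-two-sided graph maps overlapping at $p$ compose to the longer graph map, which I would check from the explicit quotient/submodule factorizations of Crawley-Boevey and Schr\"oer underlying Theorem \ref{thm: basis for Hom}; for a link realized by a connection it is the matching statement that the graph maps following the connection push its extension forward (Case 2) while those preceding it pull the extension back (Case 3), using the connecting extensions of Theorem \ref{thm: basis for ext}. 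This yields the ``between $\Rightarrow$ factors'' direction through $\gamma_{a_s}=\gamma_{X_q}$. The converse ``factors $\Rightarrow$ between'' requires that any $X_q$ witnessing reducibility be incident to $p$ and lie in the interval: for morphisms this is purely positional via Proposition \ref{prop: clockwise arcs and hom}, while for extensions one must separately treat the push-forward and the pull-back mode and show that, through the head-to-tail geometry of the connection at the junction $p$, each places $\gamma_{X_q}$ between $\gamma_{X_j}$ and $\gamma_{X_i}$ in that fan.

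The main obstacle is the Key Claim, and within it the compatibility of algebraic composition with the fan order together with the matching of both extension-reduction modes to betweenness. One must verify that the composite of the consecutive irreducible maps along the fan is, up to a nonzero scalar and lower-order terms, the morphism or extension attached to the outer pair, including the mixed chains where a single connection is flanked by graph maps; correctly identify the endpoint $p$ for each configuration (the overlap vertex for a graph map, the junction vertex for a connection); and confirm that within a fan realizing a given nonzero map no two consecutive links are both connections, since Case 4 (an extension after an extension) would otherwise force the composite to vanish. This is a finite local analysis governed by the six configurations of Figure \ref{fig: 6 possible local configurations on Annulus}, but the bookkeeping of overlaps and connecting arrows at $p$, and the careful treatment of the two extension modes, is where the real work lies.
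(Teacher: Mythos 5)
Your proposal is correct in substance and rests on the same geometric insight as the paper's proof --- that an arc of $D_\chi$ lying strictly between $\gamma_{X_j}$ and $\gamma_{X_i}$ in the complete fan at a shared endpoint $p$ is exactly a witness of reducibility of the corresponding morphism or extension --- but you package it differently. The paper proves the two contrapositives separately: for ``not immediately clockwise $\Rightarrow$ no arrow'' it runs an explicit case analysis over the four local configurations of Figure \ref{fig: options for not immediate successor}, producing the factorization by hand via the Happel--Ringel lemma (for Hom--Hom chains) and explicit morphisms of short exact sequences (for the extension cases); for the converse it argues that the absence of an arrow yields a nonzero path in $Q^\chi$ and chains immediate successions. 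You instead isolate a single two-sided dictionary (your Key Claim) and propose to establish the ``between $\Rightarrow$ factors'' half uniformly by composing consecutive graph maps and connecting extensions along the fan, using the Crawley-Boevey--Schr\"oer factorization formalism. What your route buys is a cleaner logical skeleton and an explicit treatment of the ``factors $\Rightarrow$ between'' direction, which the paper's path-chaining argument leaves rather terse (the intermediate arcs of a nonzero path could a priori sit at different marked points, and ruling this out is essentially your positional argument); what it costs is that the fan-composition step is heavier than the paper's four-picture check, and you must verify --- as you correctly flag --- that a fan realizing a nonzero outer map contains at most one connection link, since two consecutive degree-one links would kill the composite by heredity. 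These verifications are deferred in your proposal but are the same finite local computations the paper carries out, so I see no gap in the strategy, only work left to do where you say it is.
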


\begin{proof}
Suppose first that $\gamma_{X_i}$ is not immediately clockwise of $\gamma_{X_j}$. Then if $\gamma_{X_i}$ and $\gamma_{X_j}$ do not intersect at an endpoint, we have that the corresponding representations are Hom-Ext orthogonal by Proposition \ref{prop: clockwise arcs and hom}. This implies that there is no arrow from $X_i$ to $X_j$ in $Q^{\chi}$. In the case that the arcs do intersect at a shared endpoint $p$, we have that locally, the exceptional arc diagram looks like one of the four pictures in Figure \ref{fig: options for not immediate successor}. We will prove the top two cases, as the bottom two cases are similar. Moreover, the case in which the endpoint is on the inner boundary component follows from the duality in Remark \ref{rem: Duality on Annulus}.

Suppose we are in the case of the diagram in the top left. Then by Proposition \ref{prop: clockwise arcs and hom}, we have that Hom$(X_i, X_k) \neq 0 \neq \text{Hom}(X_k,X_j)$ and Hom$(X_i,X_j) \neq 0$. Let $f \in \text{Hom}(X_i, X_k), g \in \text{Hom}(X_k, X_j)$, and $h \in \text{Hom}(X_i, X_j)$. Since $\chi$ is an exceptional collection, by the Happel-Ringel lemma (Lemma \ref{lem: Happel-Ringel Lemma}), we know that each of the above morphisms are either injective or surjective. If both $f$ and $g$ are injective/surjective, then their composition is non-zero and hence $h$ is a {\color{black} multiple} of $g\circ f$. Similarly, if $f$ is surjective and $g$ is injective, we have that $h$ is a {\color{black} multiple} of $g\circ f$. In any of the above cases, when the arcs appear in this fashion, we have a non-zero path {\color{black} $X_i \overset{f}{\rightarrow} X_k \overset{g}{\rightarrow} X_j$ in $Q^{\chi}$}, and not an arrow. Finally, in the case that $f$ is injective and $g$ is surjective, then this forces $\gamma_{X_j}$ to be a boundary arc. Thus we can write $X_j$ as the string $\alpha_1 \alpha_2 \cdots \alpha_j$ with $\alpha_1$ and $\alpha_{j+1}$ both direct arrows and $s(\alpha_1) = p+1$. We get an admissible pair $(e_{s(\alpha_1)}, \alpha_1 \alpha_2 \cdots \alpha_j, \alpha_{j+1} \cdots \alpha_k),(e_{s(\alpha_1)},\alpha_1\alpha_2\cdots\alpha_j,e_{t(\alpha_j)})$ that gives the graph map $g$ from $X_k$ to $X_j$. Since $f$ is an injection and the string $X_i$ begins with $\alpha_1$, we have that the composition $g \circ f \neq 0$, and therefore we have a non-zero path {\color{black} $X_i \overset{f}{\rightarrow} X_k \overset{g}{\rightarrow} X_j$ in $Q^{\chi}$}, not an arrow.

{\color{black}
Suppose now that we are in the case of the diagram in the top right. Suppose first that the morphism $f$ from $X_k$ to $X_j$ is surjective. The arcs $X_k$ and $X_j$ must both have winding number 0, for if not they would cross. Let $\gamma_{X_i} = a(s_i,t_i)$ and similarly for $X_k$ and $X_j$. Since the morphism from $X_k$ to $X_j$ is surjective, the marked point $t_k$ must be clockwise of $t_j$. The middle term in the short exact sequence given by this configuration of $X_k$ and $X_i$ is arc $a(s_i,t_k)$. Similarly, the middle term in the short exact sequence given by this configuration of $X_i$ and $X_j$ is arc $a(s_i,t_j)$.
From the diagram, we see that there is a morphism $g$ from $a(s_i,t_k)$ to $a(s_i,t_j)$ that must be a surjection as $t_k$ is clockwise of $t_j$. Let $E = \alpha_1\alpha_2\cdots\alpha_k$ and $E' = \alpha_1\alpha_2\cdots\alpha_l$ be the two string modules associated to $a(s_i,t_k)$ and $a(s_i,t_j)$ respectively. We have that the kernel of the morphism $g$ is precisely $\alpha_{l+1} \cdots \alpha_k$, which is the kernel of $f$. Thus, we get the following commuting diagram:

\begin{center}
\begin{tabular}{c}
\begin{xymatrix}{0 \ar[r] \ar[d]_0 & X_k \ar[d]_f \ar[r] & E \ar[r] \ar[d]_g & X_i \ar[r] \ar[d]_{1} & 0 \ar[d]_0 \\ 0 \ar[r] & X_j \ar[r] & E' \ar[r] & X_i \ar[r] & 0  }\end{xymatrix}
\end{tabular}
\end{center}

Therefore, there is no arrow in $Q^{\chi}$ from $X_i$ to $X_j$. The case in which the morphism from $X_k$ to $X_j$ is injective is similar.
}

Conversely, suppose there is no arrow from $X_i$ to $X_j$ in $Q^{\chi}$. If $X_i$ and $X_j$ are Hom-Ext orthogonal, then $\gamma_{X_i}$ and $\gamma_{X_j}$ do not intersect at their endpoints, hence are not immediate successors. Suppose that $X_i \rightarrow X_{i+1} \rightarrow \cdots \rightarrow X_j$ is a non-zero path in $Q^{\chi}$. Then by the forward direction, $X_i$ is immediately clockwise of $X_{i+1}$, which is immediately clockwise of $X_{i+2}$, so on. We conclude that $X_i$ is not immediately clockwise of $X_j$.
\end{proof}

\begin{exmp}\label{example: immidiately clockwise iff arrow}
   {\color{black}Let $Q$ be the following quiver.}

\[
\xymatrix{
1 \ar_{\alpha_1}[r] \ar@/^2pc/^{\alpha_4}[rrr] & 2 \ar_{\alpha_2}[r] & 3 \ar_{\alpha_3}[r] & 4 
	}
\]

As in the proof of Lemma \ref{lem: arrow and immediately clockwise}, let $X_i = a(1,2)[0], X_k = a(2,4)[0],$ and $X_j = a(2,3)[0]$. Then the corresponding arc diagram is given by:

\begin{center}

\tikzset{every picture/.style={line width=0.75pt}} 

\begin{tikzpicture}[x=0.75pt,y=0.75pt,yscale=-1,xscale=1]

\draw   (279.91,124.75) .. controls (279.91,112.99) and (291.13,103.45) .. (304.98,103.45) .. controls (318.83,103.45) and (330.05,112.99) .. (330.05,124.75) .. controls (330.05,136.51) and (318.83,146.05) .. (304.98,146.05) .. controls (291.13,146.05) and (279.91,136.51) .. (279.91,124.75)(247.96,124.75) .. controls (247.96,95.34) and (273.49,71.5) .. (304.98,71.5) .. controls (336.47,71.5) and (362,95.34) .. (362,124.75) .. controls (362,154.16) and (336.47,178) .. (304.98,178) .. controls (273.49,178) and (247.96,154.16) .. (247.96,124.75) ;
\draw [color={rgb, 255:red, 208; green, 2; blue, 27 }  ,draw opacity=1 ]   (304.98,178) .. controls (222,108) and (277.47,82.17) .. (304.98,103.45) ;

\draw [color={rgb, 255:red, 0; green, 0; blue, 0 }  ,draw opacity=1 ]   (247.96,124.75) .. controls (259.79,151.22) and (275.12,167.54) .. (304.98,178) ;
\draw [color={rgb, 255:red, 22; green, 48; blue, 226 }  ,draw opacity=1 ]   (304.98,178) .. controls (338.76,160.01) and (348.19,154.98) .. (362,124.75) ;

\draw (364,128.15) node [anchor=north west][inner sep=0.75pt]    {$1$};
\draw (306.98,181.4) node [anchor=north west][inner sep=0.75pt]    {$2$};
\draw (232,128.15) node [anchor=north west][inner sep=0.75pt]    {$3$};
\draw (306.98,106.85) node [anchor=north west][inner sep=0.75pt]    {$4$};

\draw    (362,124.75) ;
\draw [shift={(362,124.75)}, rotate = 0] [color={rgb, 255:red, 0; green, 0; blue, 0 }  ][fill={rgb, 255:red, 0; green, 0; blue, 0 }  ][line width=0.75]      (0, 0) circle [x radius= 3.35, y radius= 3.35]   ;
\draw    (304.98,178) ;
\draw [shift={(304.98,178)}, rotate = 0] [color={rgb, 255:red, 0; green, 0; blue, 0 }  ][fill={rgb, 255:red, 0; green, 0; blue, 0 }  ][line width=0.75]      (0, 0) circle [x radius= 3.35, y radius= 3.35]   ;
\draw    (304.98,103.45) ;
\draw [shift={(304.98,103.45)}, rotate = 0] [color={rgb, 255:red, 0; green, 0; blue, 0 }  ][fill={rgb, 255:red, 0; green, 0; blue, 0 }  ][line width=0.75]      (0, 0) circle [x radius= 3.35, y radius= 3.35]   ;
\draw    (247.96,124.75) ;
\draw [shift={(247.96,124.75)}, rotate = 0] [color={rgb, 255:red, 0; green, 0; blue, 0 }  ][fill={rgb, 255:red, 0; green, 0; blue, 0 }  ][line width=0.75]      (0, 0) circle [x radius= 3.35, y radius= 3.35]   ;

\end{tikzpicture}

\end{center}

There is a quotient factorization $(e_3, e_3, \alpha_3)$ of the string $X_k$, which gives a surjective morphism onto $X_j$. Now, we get the following morphism between short exact sequences:

\begin{center}
\begin{tabular}{c}
\begin{xymatrix}{0 \ar[r] \ar[d]_0 & X_k \ar[d]_f \ar[r] & a(1,4)[0] \ar[r] \ar[d]_g & X_i \ar[r] \ar[d]_{1} & 0 \ar[d]_0 \\ 0 \ar[r] & X_j \ar[r] & a(1,3)[0] \ar[r] & X_i \ar[r] & 0  }\end{xymatrix}
\end{tabular}
\end{center}

In the corresponding arc diagram, the arcs $a(1,4)[0]$ and $a(1,3)[0]$ are depicted in orange and purple respectively: 

\begin{center}

\tikzset{every picture/.style={line width=0.75pt}} 

\begin{tikzpicture}[x=0.75pt,y=0.75pt,yscale=-1,xscale=1]

\draw   (252.14,139.03) .. controls (252.14,112.29) and (278.74,90.62) .. (311.56,90.62) .. controls (344.37,90.62) and (370.98,112.29) .. (370.98,139.03) .. controls (370.98,165.76) and (344.37,187.44) .. (311.56,187.44) .. controls (278.74,187.44) and (252.14,165.76) .. (252.14,139.03)(179.52,139.03) .. controls (179.52,72.18) and (238.64,18) .. (311.56,18) .. controls (384.48,18) and (443.59,72.18) .. (443.59,139.03) .. controls (443.59,205.87) and (384.48,260.05) .. (311.56,260.05) .. controls (238.64,260.05) and (179.52,205.87) .. (179.52,139.03) ;

\draw [color={rgb, 255:red, 208; green, 2; blue, 27 }  ,draw opacity=1 ]   (311.56,260.05) .. controls (119.42,100.96) and (247.86,42.25) .. (311.56,90.62) ;

\draw [color={rgb, 255:red, 0; green, 0; blue, 0 }  ,draw opacity=1 ]   (179.52,139.03) .. controls (206.93,199.18) and (242.41,236.27) .. (311.56,260.05) ;
\draw [color={rgb, 255:red, 22; green, 48; blue, 226 }  ,draw opacity=1 ]   (311.56,260.05) .. controls (389.78,219.15) and (411.61,207.74) .. (443.59,139.03) ;
\draw [color={rgb, 255:red, 245; green, 166; blue, 35 }  ,draw opacity=1 ]   (311.56,90.62) .. controls (158.78,39.59) and (233.52,305.95) .. (443.59,139.03) ;
\draw [color={rgb, 255:red, 189; green, 16; blue, 224 }  ,draw opacity=1 ]   (179.52,139.03) .. controls (295,226) and (350.64,277.48) .. (443.59,139.03) ;
\draw    (374,185) .. controls (371.15,194.5) and (376.42,196.78) .. (387.25,200.41) ;
\draw [shift={(389,201)}, rotate = 198.43] [color={rgb, 255:red, 0; green, 0; blue, 0 }  ][line width=0.75]    (10.93,-3.29) .. controls (6.95,-1.4) and (3.31,-0.3) .. (0,0) .. controls (3.31,0.3) and (6.95,1.4) .. (10.93,3.29)   ;
\draw    (241,192) .. controls (231.7,188.28) and (227.59,194.07) .. (222.23,200.53) ;
\draw [shift={(221,202)}, rotate = 310.6] [color={rgb, 255:red, 0; green, 0; blue, 0 }  ][line width=0.75]    (10.93,-3.29) .. controls (6.95,-1.4) and (3.31,-0.3) .. (0,0) .. controls (3.31,0.3) and (6.95,1.4) .. (10.93,3.29)   ;

\draw (456.12,156.43) node [anchor=north west][inner sep=0.75pt]    {$1$};
\draw (324.08,277.45) node [anchor=north west][inner sep=0.75pt]    {$2$};
\draw (150.47,156.43) node [anchor=north west][inner sep=0.75pt]    {$3$};
\draw (324.08,108.02) node [anchor=north west][inner sep=0.75pt]    {$4$};
\draw (362,188.4) node [anchor=north west][inner sep=0.75pt]    {$g$};
\draw (232,195.4) node [anchor=north west][inner sep=0.75pt]    {$f$};

\draw    (443.59,139.03) ;
\draw [shift={(443.59,139.03)}, rotate = 0] [color={rgb, 255:red, 0; green, 0; blue, 0 }  ][fill={rgb, 255:red, 0; green, 0; blue, 0 }  ][line width=0.75]      (0, 0) circle [x radius= 3.35, y radius= 3.35]   ;
\draw    (311.56,260.05) ;
\draw [shift={(311.56,260.05)}, rotate = 0] [color={rgb, 255:red, 0; green, 0; blue, 0 }  ][fill={rgb, 255:red, 0; green, 0; blue, 0 }  ][line width=0.75]      (0, 0) circle [x radius= 3.35, y radius= 3.35]   ;
\draw    (311.56,90.62) ;
\draw [shift={(311.56,90.62)}, rotate = 0] [color={rgb, 255:red, 0; green, 0; blue, 0 }  ][fill={rgb, 255:red, 0; green, 0; blue, 0 }  ][line width=0.75]      (0, 0) circle [x radius= 3.35, y radius= 3.35]   ;
\draw    (179.52,139.03) ;
\draw [shift={(179.52,139.03)}, rotate = 0] [color={rgb, 255:red, 0; green, 0; blue, 0 }  ][fill={rgb, 255:red, 0; green, 0; blue, 0 }  ][line width=0.75]      (0, 0) circle [x radius= 3.35, y radius= 3.35]   ;

\end{tikzpicture}

\end{center}
The morphism $g: a(1,4)[0] \rightarrow a(1,3)[0]$ can be seen directly from the arc diagram, however, it is also given by the quotient factorization $a(1,4)[0] = (e_2,\alpha_2, \alpha_3)$.
\end{exmp}

\begin{lem}\label{lem: two distinct paths}
Let $\chi = \{X_1, X_2, \dots, X_n\}$ be an exceptional collection. For $i,j\in\{1,2,\dots,n\}$, there are at most two non-zero paths from $X_i$ to $X_j$ in $(Q^{\chi},R)$, {\color{black} and if there are two paths}, their difference is not in $R$. 
\end{lem}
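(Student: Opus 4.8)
The plan is to work entirely in the geometric model and to read off the combinatorics of paths from the fan structure supplied by Lemma \ref{lem: arrow and immediately clockwise}. Fix the exceptional arc diagram $D_\chi$, and recall that by that lemma an arrow $X_a \to X_b$ in $Q^{\chi}$ records exactly that $\gamma_{X_a}$ is immediately clockwise of $\gamma_{X_b}$ at a shared marked point. Each arc $\gamma_{X_k}$ has two distinct endpoints $s,t$ (there are no loops in $D_\chi$), and at each endpoint it lies in a complete fan, where it has at most one immediate clockwise neighbour (an incoming arrow) and at most one immediate counterclockwise neighbour (an outgoing arrow). Hence every vertex of $Q^{\chi}$ has at most two incoming and at most two outgoing arrows, one of each kind at each endpoint; in particular $X_i$ has at most one outgoing arrow ``at $s$'' and one ``at $t$''.

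The key step I would prove first is a \emph{no-turning} statement: for consecutive arrows $X_a \to X_k \to X_b$, if they meet $\gamma_{X_k}$ at its two different endpoints then their composite vanishes in $\mathscr{D}$ and so lies in $R$, whereas if they meet $\gamma_{X_k}$ at the same endpoint (so $X_a,X_k,X_b$ are consecutive in one fan) the composite is non-zero, exactly as in the top-left configuration analysed in the proof of Lemma \ref{lem: arrow and immediately clockwise}. Granting this, a non-zero path out of $X_i$ whose first arrow is at $s$ is forced to use only arrows at $s$, hence to descend the complete fan at $s$; likewise a path beginning at $t$ descends the fan at $t$. Within a single fan the arrows form a linear clockwise chain, so there is at most one non-zero path from $X_i$ to $X_j$ inside each fan, and since $Q^{\chi}$ is acyclic (Lemma \ref{lem: Hom-Ext quiver acyclic}) such a chain visits $X_j$ at most once. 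Combining the two fans yields at most two non-zero paths from $X_i$ to $X_j$.

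It remains to see that two such paths $p_s$ (through the fan at $s$) and $p_t$ (through the fan at $t$) have difference not in $R$, i.e. represent distinct elements of the Ext algebra $\Bbbk Q^{\chi}/R$. By the previous paragraph two paths occur only when $X_j$ lies in both fans of $\gamma_{X_i}$, which forces $\gamma_{X_i}$ and $\gamma_{X_j}$ to share both endpoints, say $\gamma_{X_i}=a(s,t)[\lambda_1]$ and $\gamma_{X_j}=a(s,t)[\lambda_2]$. Reading each path as a composite of morphisms in $\mathscr{D}$ via Remarks \ref{rem: push forward in D} and \ref{rem: pull back in D}, I would show that $p_s$ and $p_t$ compute two distinct basis elements among the graph maps and two-sided extensions of Theorems \ref{thm: basis for Hom} and \ref{thm: basis for ext} joining the parallel arcs $a(s,t)[\lambda_1]$ and $a(s,t)[\lambda_2]$ (the two ``sides''). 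As distinct basis vectors of the relevant $\mathrm{Hom}$- or $\mathrm{Ext}$-space they are linearly independent, so $p_s - p_t \neq 0$ in $\Bbbk Q^{\chi}/R$, which is precisely $p_s - p_t \notin R$.

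The main obstacle is the no-turning vanishing, as the rest is bookkeeping once it is in hand. I expect to prove it through the Crawley--Boevey/Schr\"oer description of morphisms and extensions as graph maps: an arrow meeting $\gamma_{X_k}$ at $s$ is realized by a graph map whose overlap substring lies at the $s$-end of $X_k$, while an arrow meeting $\gamma_{X_k}$ at $t$ involves only its $t$-end, and because $X_k$ is exceptional (hence a brick) these two overlaps are incompatible, forcing the composite to be zero. The delicate points are to treat uniformly the cases where one or both arrows are extension ($\beta$-type) arrows, using that $\Bbbk Q$ is hereditary so that no non-zero composite involves two extensions (Case 4 of Definition \ref{defn: Hom-Ext quiver in repQ}), and to reduce endpoints on the inner boundary to endpoints on the outer boundary via the duality of Remark \ref{rem: Duality on Annulus}.
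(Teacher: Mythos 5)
Your proposal is correct and follows essentially the same route as the paper: a non-zero path must localize at one of the (at most two) shared endpoints of $\gamma_{X_i}$ and $\gamma_{X_j}$, and when both occur the arcs share both endpoints, so the two paths land in a two-dimensional $\operatorname{Hom}$ or $\operatorname{Ext}$ space as distinct basis elements, whence their difference is not in $R$. The only difference is one of rigor: you isolate as a ``no-turning'' lemma (composites of arrows meeting the middle arc at different endpoints vanish) the localization step that the paper asserts implicitly, which is a reasonable refinement since that vanishing is exactly the relation-matching half of Proposition \ref{prop: H-E quiver is tiling algebra} and is provable directly from the graph-map bases of Theorems \ref{thm: basis for Hom} and \ref{thm: basis for ext} without circularity.
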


\begin{proof}
In the case $i=j$, there is only the trivial path, so assume $i \neq j$ and that there is a non-zero path from $X_i$ to $X_j$ in $Q^{\chi}$. Then this path tells us that that there is a non-trivial morphism from $X_i$ to $X_j$ or extension of $X_i$ by $X_j$. In either case, we have that in the associated diagram $D_{\chi}$, the arcs $\gamma_{X_i}$ and $\gamma_{X_j}$ must intersect at their endpoints. Since there are two ways for this to happen, there can be at most two non-zero paths from $X_i$ to $X_j$ in $Q^{\chi}$. {\color{black} If there are two non-zero paths from $X_i$ to $X_j$ in $Q^{\chi}$, then up to twist, the arcs corresponding to $X_i$ and $X_j$ appear as follows:}

\begin{center}

\tikzset{every picture/.style={line width=0.75pt}} 

\begin{tikzpicture}[x=0.75pt,y=0.75pt,yscale=-1,xscale=1]

\draw   (284.01,99.25) .. controls (284.01,81.19) and (299.06,66.55) .. (317.64,66.55) .. controls (336.21,66.55) and (351.27,81.19) .. (351.27,99.25) .. controls (351.27,117.31) and (336.21,131.95) .. (317.64,131.95) .. controls (299.06,131.95) and (284.01,117.31) .. (284.01,99.25)(234.96,99.25) .. controls (234.96,54.1) and (271.97,17.5) .. (317.64,17.5) .. controls (363.3,17.5) and (400.32,54.1) .. (400.32,99.25) .. controls (400.32,144.4) and (363.3,181) .. (317.64,181) .. controls (271.97,181) and (234.96,144.4) .. (234.96,99.25) ;
\draw    (317.64,65.46) ;
\draw [shift={(317.64,65.46)}, rotate = 0] [color={rgb, 255:red, 0; green, 0; blue, 0 }  ][fill={rgb, 255:red, 0; green, 0; blue, 0 }  ][line width=0.75]      (0, 0) circle [x radius= 3.35, y radius= 3.35]   ;
\draw    (400.32,99.25) ;
\draw [shift={(400.32,99.25)}, rotate = 0] [color={rgb, 255:red, 0; green, 0; blue, 0 }  ][fill={rgb, 255:red, 0; green, 0; blue, 0 }  ][line width=0.75]      (0, 0) circle [x radius= 3.35, y radius= 3.35]   ;
\draw    (346.64,115.04) ;
\draw [shift={(346.64,115.04)}, rotate = 0] [color={rgb, 255:red, 0; green, 0; blue, 0 }  ][fill={rgb, 255:red, 0; green, 0; blue, 0 }  ][line width=0.75]      (0, 0) circle [x radius= 3.35, y radius= 3.35]   ;
\draw    (234.96,99.25) ;
\draw [shift={(234.96,99.25)}, rotate = 0] [color={rgb, 255:red, 0; green, 0; blue, 0 }  ][fill={rgb, 255:red, 0; green, 0; blue, 0 }  ][line width=0.75]      (0, 0) circle [x radius= 3.35, y radius= 3.35]   ;
\draw    (317.64,181) ;
\draw [shift={(317.64,181)}, rotate = 0] [color={rgb, 255:red, 0; green, 0; blue, 0 }  ][fill={rgb, 255:red, 0; green, 0; blue, 0 }  ][line width=0.75]      (0, 0) circle [x radius= 3.35, y radius= 3.35]   ;
\draw    (289,116) ;
\draw [shift={(289,116)}, rotate = 0] [color={rgb, 255:red, 0; green, 0; blue, 0 }  ][fill={rgb, 255:red, 0; green, 0; blue, 0 }  ][line width=0.75]      (0, 0) circle [x radius= 3.35, y radius= 3.35]   ;
\draw    (317.64,181) .. controls (235,105) and (259,42) .. (317.64,65.46) ;
\draw    (317.64,65.46) .. controls (360,39) and (417,80) .. (317.64,181) ;

\draw (378.97,159.47) node [anchor=north west][inner sep=0.75pt]  [rotate=-308.4]  {$\dotsc $};
\draw (240.42,140.03) node [anchor=north west][inner sep=0.75pt]  [rotate=-53.65]  {$\dotsc $};
\draw (346.25,68.96) node [anchor=north west][inner sep=0.75pt]  [rotate=-56.83]  {$\dotsc $};
\draw (306.86,137.65) node [anchor=north west][inner sep=0.75pt]  [rotate=-358.9]  {$\dotsc $};
\draw (273.95,91.51) node [anchor=north west][inner sep=0.75pt]  [rotate=-299.79]  {$\dotsc $};
\draw (245,107.4) node [anchor=north west][inner sep=0.75pt]    {$\gamma_{X_{i}}$};
\draw (369,107.4) node [anchor=north west][inner sep=0.75pt]    {$\gamma_{X_{j}}$};
\draw (234.96,51.45) node [anchor=north west][inner sep=0.75pt]  [rotate=-308.48]  {$\dotsc $};
\draw (378.75,31.11) node [anchor=north west][inner sep=0.75pt]  [rotate=-51.09]  {$\dotsc $};

\end{tikzpicture}

\end{center}

In this case, we have a two-dimensional Hom or Ext space, and hence the two paths give different morphisms or extensions.
\end{proof}
{\color{black} 
Combining the results so far in this section, we see that the Hom-Ext quiver associated to an exceptional collection in type $\tilde{\mathbb{A}}$ behaves very similarly to the tiling algebra associated to the corresponding exceptional arc diagram. A more general definition can be found as Definition 2.1 in \cite{baur2021geometric}; however, since here we are dealing with exceptional collections and there are no loops, we can make the following definition.

\begin{defn}
    The \textbf{tiling algebra} associated to the exceptional arc diagram $D_{\chi}$ is the algebra $T_{\chi} = \Bbbk Q_{\chi}/R_{\chi}$, where $(Q_{\chi},R_{\chi})$ is defined as follows:
    \begin{enumerate}
        \item The vertices are in bijection with the arcs in $\chi$.
        \item There is an arrow $\alpha:\gamma_{X_i} \rightarrow \gamma_{X_j}$ if the arcs $\gamma_{X_i}$ and $\gamma_{X_j}$ share an endpoint $p$ and $\gamma_{X_j}$ is immediately counterclockwise of $\gamma_{X_i}$.
        \item $R_{\chi}$ is generated by paths $\gamma_{X_i} \overset{\alpha}{\rightarrow} \gamma_{X_j} \overset{\beta}{\rightarrow} \gamma_{X_k}$ of length two such that the shared endpoint of $\gamma_{X_i}$ and $\gamma_{X_j}$ is different from the shared endpoint of $\gamma_{X_j}$ and $\gamma_{X_k}$. 
    \end{enumerate}
\end{defn}
} 

\begin{prop}\label{prop: H-E quiver is tiling algebra}
{\color{black} Let $\chi$ be an exceptional set}. We have an isomorphism of algebras $T_{\chi} \cong \Bbbk Q^{\chi}/R$.
\end{prop}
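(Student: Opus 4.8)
The plan is to verify that the two presentations $(Q^{\chi},R)$ and $(Q_{\chi},R_{\chi})$ agree on vertices, on arrows, and on relations. Vertices agree by Theorem \ref{thm: ec bijection with arc diagrams}: both vertex sets are identified with the arcs of $D_{\chi}$ via $X_i \mapsto \gamma_{X_i}$. Arrows agree by Lemma \ref{lem: arrow and immediately clockwise}: there is an arrow $X_i \to X_j$ in $Q^{\chi}$ exactly when $\gamma_{X_i}$ is immediately clockwise of $\gamma_{X_j}$ at a shared endpoint, which is the same as saying $\gamma_{X_j}$ is immediately counterclockwise of $\gamma_{X_i}$, i.e. the defining condition for an arrow $\gamma_{X_i} \to \gamma_{X_j}$ in $Q_{\chi}$. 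Recording for each arrow the marked point at which the immediate-successor relation occurs gives a bijection of arrow sets, hence an isomorphism $\Bbbk Q^{\chi} \cong \Bbbk Q_{\chi}$ identifying each arrow with its geometric counterpart. Under this identification it remains only to prove $R = R_{\chi}$.

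The heart of the argument is the following dichotomy for a length-two path $X_i \xrightarrow{f} X_j \xrightarrow{g} X_k$ in $Q^{\chi}$: writing $p_f, p_g$ for the endpoints of $\gamma_{X_j}$ at which $f$ and $g$ occur, the composite $g \circ f$ vanishes when $p_f \neq p_g$ and is nonzero when $p_f = p_g$. First I would reinterpret every composite as a composition of morphisms in $\mathscr{D}$ between the objects of $\chi \oplus \Sigma\chi$, using Remarks \ref{rem: push forward in D} and \ref{rem: pull back in D} to realize the push-forward (Case 2) and pull-back (Case 3) of an extension as honest composites $\xi' \circ h$ and $\Sigma h \circ \xi'$ in $\mathscr{D}$; the Ext--Ext composite (Case 4 of Definition \ref{defn: Hom-Ext quiver in repQ}) is zero for free since $\Bbbk Q$ is hereditary. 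Each of $f,g$ is a graph map (Theorem \ref{thm: basis for Hom}) whose identified substring is adjacent to $p_f$, resp. $p_g$, in $\gamma_{X_j}$ --- this is the local immediate-successor data already used in the proof of Lemma \ref{lem: arrow and immediately clockwise}. When $p_f \neq p_g$ the matched substrings lie at opposite ends of $\gamma_{X_j}$ and no quotient/submodule factorization survives the composition, so $g \circ f = 0$; when $p_f = p_g$ the substrings overlap consistently and the composite is again a graph map (a nested inclusion/projection of arcs in the common fan at $p_f$), hence nonzero. This is exactly the ``angle at a marked point'' behaviour of the geometric model of $\mathscr{D}$ in \cite{opper2018geometric}, and it reproduces precisely the defining generators of $R_{\chi}$.

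With the dichotomy in hand I would finish as follows. The equivalence ``$p_f \neq p_g \iff g\circ f = 0$'' shows that the degree-two monomial relations of $R$ are exactly the generators of $R_{\chi}$, giving $R_{\chi} \subseteq R$ and hence a surjection $T_{\chi} = \Bbbk Q^{\chi}/R_{\chi} \twoheadrightarrow \Bbbk Q^{\chi}/R$. For injectivity, recall that the tiling algebra is gentle by \cite{baur2021geometric}, so $T_{\chi}$ has a basis indexed by its strings, i.e. paths avoiding $R_{\chi}$; by the dichotomy such a path has all of its arrows at a single marked point $p$, so it runs through a consecutive segment of the fan at $p$. Within a single fan consecutive arrows are never both extensions --- by Proposition \ref{prop: clockwise arcs and hom} the extension-type adjacencies occur only at the unique place where arcs ending at $p$ meet arcs starting at $p$ --- so Case 4 never arises along a fan path. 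Composing along the fan is therefore a nonzero graph map (an iterated nested inclusion/projection, as exhibited by the factorizations in the proof of Lemma \ref{lem: arrow and immediately clockwise}), possibly twisted once by a push-forward or pull-back, and remains nonzero. Distinct fan paths give linearly independent maps, the only possible coincidences being the parallel pairs of Lemma \ref{lem: two distinct paths}, whose difference is not in $R$. Hence each basis element of $T_{\chi}$ has nonzero image, the surjection is an isomorphism, and $T_{\chi} \cong \Bbbk Q^{\chi}/R$.

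The main obstacle I expect is the second paragraph: establishing the vanishing/nonvanishing dichotomy uniformly across the Hom--Hom, Hom--Ext and Ext--Hom cases. The Ext--Ext case and the ``same marked point'' nonvanishing are quick, but showing that a morphism supported at one end of $\gamma_{X_j}$ fails to compose with structure supported at the other end requires careful bookkeeping of quotient- and submodule-factorizations; this is most cleanly carried out by pushing the entire computation into $\mathscr{D}$ and reading off compositions from the graded arc model of \cite{opper2018geometric}.
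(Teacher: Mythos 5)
Your proposal is correct and follows essentially the same route as the paper's proof: identify vertices, match arrows via Lemma \ref{lem: arrow and immediately clockwise}, characterize the monomial relations by whether consecutive arrows occur at the same shared endpoint of the middle arc, and invoke Lemma \ref{lem: two distinct paths} to rule out non-monomial relations. The only difference is one of detail --- you carefully justify the vanishing/nonvanishing dichotomy for length-two paths (via graph maps and the fan structure), whereas the paper simply asserts it.
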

{\color{black}
\begin{proof}
We have a bijection between $(Q_{\chi})_0$ and $Q^{\chi}_0$. By Lemma \ref{lem: arrow and immediately clockwise}, we have an order-preserving bijection between the arrows of $Q_{\chi}$ and those of $Q^{\chi}$. To finish the proof, by Lemma \ref{lem: two distinct paths}, it suffices to show that both $Q_{\chi}$ and $Q^{\chi}$ have the same monomial relations; but, this is implied by the fact that both $Q_{\chi}$ and $Q^{\chi}$ have a monomial relation $\alpha_1\alpha_2$ if and only {\color{black} if there is a path $\gamma_1 \overset{\alpha_1}{\rightarrow} \gamma_2 \overset{\alpha_1}{\rightarrow} \gamma_3$  such that $\gamma_1$ and $\gamma_2$ share an endpoint $p$ that $\gamma_2$ and $\gamma_3$ don't.} 
\end{proof}
}
{\color{black} In Definition 2.1 in \cite{baur2021geometric}, when defining the tiling algebra, it was assumed that the collection of arcs on the surface formed a partial triangulation. That is, the arcs don't intersect themselves or each other in the interior of the surface. In light of Proposition \ref{prop: H-E quiver is tiling algebra}, it may be the case that Hom-Ext quivers are a generalization of tiling algebras to sets of arcs that do cross themselves. Moreover, we wonder:}

\begin{question}
Can a similar result be attained for other surfaces? We believe this result holds in type $\mathbb{A}$ as stated, however, we will likely need looser hypotheses on $\chi$ in non-hereditary cases.
\end{question}

\begin{cor}\label{cor: Hom-Ext quiver is gentle}
$\Bbbk Q^{\chi}/R$ is a gentle algebra.
\end{cor}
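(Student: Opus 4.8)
The plan is to deduce gentleness directly from the isomorphism $T_\chi \cong \Bbbk Q^\chi / R$ established in Proposition \ref{prop: H-E quiver is tiling algebra}, so that it suffices to check that the tiling algebra of an exceptional arc diagram satisfies the four defining conditions of a gentle algebra. Since an exceptional arc diagram consists of arcs that neither self-intersect nor intersect one another nontrivially (and contain no loops), the diagram $D_\chi$ is a genuine partial triangulation of the annulus, so one option is simply to invoke the fact from \cite{baur2021geometric} that tiling algebras of partial triangulations are gentle. I would, however, prefer to verify the conditions intrinsically using the local fan structure, since the combinatorics is already set up in Lemma \ref{lem: arrow and immediately clockwise}.

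The key observation is that every arc $\gamma_{X_i}$ has exactly two endpoints, and at each endpoint the complete fan is linearly ordered in the clockwise direction. By Lemma \ref{lem: arrow and immediately clockwise}, an arrow $X_i \to X_j$ corresponds to $\gamma_{X_j}$ being the immediate counterclockwise neighbour of $\gamma_{X_i}$ at a shared endpoint. First I would check conditions (1) and (2) of a string algebra: at each of its two endpoints, $\gamma_{X_i}$ has at most one immediate clockwise neighbour and at most one immediate counterclockwise neighbour, which bounds the number of incoming and of outgoing arrows at each vertex by two, and shows that for a fixed arrow there is at most one way to precompose, and at most one way to postcompose, with another arrow sharing the same endpoint so as to give a nonzero composition. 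Next, condition (3) is immediate because, by the description of $R_\chi$ recalled before Proposition \ref{prop: H-E quiver is tiling algebra} and transported across the isomorphism, the ideal $R$ is generated by the length-two paths $\gamma_{X_i} \to \gamma_{X_j} \to \gamma_{X_k}$ whose two shared endpoints differ; these are monomial relations of length two.

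Finally, for condition (4) I would fix an arrow $\alpha \colon \gamma_{X_i} \to \gamma_{X_j}$, necessarily realised at one endpoint $p$ of $\gamma_{X_j}$, and ask for arrows $\beta$ with $0 \neq \alpha\beta \in R$. Such a $\beta$ must leave $\gamma_{X_j}$ along its other endpoint $p'$, and there is at most one immediate counterclockwise neighbour of $\gamma_{X_j}$ at $p'$; the dual statement handles $\gamma$ with $0 \neq \gamma\alpha \in R$. This yields the single arrow required by the gentle axiom in each direction. The only point requiring care—and the step I expect to be the main obstacle—is bookkeeping the clockwise versus counterclockwise conventions consistently across the two endpoints of each arc, together with confirming via Lemma \ref{lem: two distinct paths} that no additional relations beyond the length-two monomial ones arise that could spoil condition (3). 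Once these are pinned down, all four conditions follow and $\Bbbk Q^\chi / R$ is gentle.
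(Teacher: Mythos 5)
Your first option---deduce gentleness from Proposition \ref{prop: H-E quiver is tiling algebra} together with the result of \cite{baur2021geometric} that tiling algebras of partial triangulations are gentle---is precisely the paper's entire proof (it cites Theorem 2.10 of \cite{baur2021geometric} and stops there). Your preferred direct verification is a sound and more self-contained alternative, and the details you sketch are correct: each arc has two endpoints with at most one immediate clockwise and one immediate counterclockwise neighbour at each, which gives conditions (1) and (2) of a string algebra; the relations $R_{\chi}$ are by definition the length-two monomials through \emph{distinct} shared endpoints, giving condition (3) once Lemma \ref{lem: two distinct paths} is used, as you note, to rule out any commutativity-type relation between the (at most two) parallel paths; and condition (4) follows because an arrow into $\gamma_{X_j}$ at one endpoint $p$ can only be continued into a relation by the unique outgoing arrow of $\gamma_{X_j}$ at the other endpoint $p'$. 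The trade-off is the expected one: the paper's citation is shorter but imports the hypothesis that the arcs form a partial triangulation (which holds here since exceptional arc diagrams have no interior crossings), while your argument makes the gentle axioms visible directly in the fan combinatorics already established in Lemma \ref{lem: arrow and immediately clockwise} and would survive in settings where one does not wish to lean on \cite{baur2021geometric}. The only point to be careful about, as you anticipate, is that condition (2) concerns compositions \emph{not} in $R$ (same shared endpoint) while condition (4) concerns compositions \emph{in} $R$ (different shared endpoints); your sketch keeps these straight.
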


\begin{proof}
This follows from Proposition \ref{prop: H-E quiver is tiling algebra} along with Theorem 2.10 in \cite{baur2021geometric}.
\end{proof}

\begin{question}
Corollary \ref{cor: Hom-Ext quiver is gentle} shows that when we start with a quiver of type $\tilde{\mathbb{A}}$, the relations of the Hom-Ext quiver of an exceptional collection are generated by monomials of length 2. We believe this this true for gentle algebras in general. For an arbitrary finite acyclic quiver $Q$, are  monomial relations enough, or are there more? 
\end{question}

Now that we know the Hom-Ext quiver of an exceptional collection is encoded entirely in its arc diagram, we can use the arc diagrams to study Hom-Ext quivers. Namely, what we wish to do for the remainder of this section is classify exceptional sets in terms of {\color{black} isoclasses} of their Hom-Ext quivers. To do this, we will begin with some definitions and a pair of lemmas.

\begin{defn}\label{defn: heart}
For an exceptional collection $\chi$, there is a subset $\mathcal{H} \subset \chi$ such that $D_{\mathcal{H}}$ encloses the inner boundary of $A_Q$. The \textbf{heart} of $\chi$, denoted by $\mathcal{H}'$, is the smallest such subset. The \textbf{extended heart} of $\chi$, denoted by $\mathcal{H}\subset \chi$ is the collection of arcs in $\mathcal{H}'$ along with the set of arcs in $D_{\chi}$ through which each morphism/extension between modules in $\mathcal{H'}$ factors. 

{\color{black} Let $(Q^{\chi_1},R_1)$ and $(Q^{\chi_2},R_2)$ be two Hom-Ext quivers associated to two exceptional collections $\chi_1$ and $\chi_2$ respectively. Let $\varphi: (Q^{\chi_1},R_1) \rightarrow (Q^{\chi_2},R_2)$ be an isomorphism of quivers with relations. We call $\varphi$ an \textbf{isomorphism of Hom-Ext quivers} if $\varphi$ preserves the component of the AR quiver in which each module resides. That is, if $X_i$ is non-regular then $\varphi(X_i)$ is non-regular. Analogous statements hold for left and right regular modules. When there is an isomorphism between $(Q^{\chi_1},R_1)$ and $(Q^{\chi_2},R_2)$, we say that the two Hom-Ext quivers are \textbf{isomorphic}, denoted $(Q^{\chi_1},R_1) \cong (Q^{\chi_2},R_2)$. }
\end{defn}

\begin{rem}\label{rem: heart well-defined}
Since $\chi$ is a complete  exceptional collection, there is one and only one heart. It is of the form $\mathcal{H}' := \{ \gamma_1, \gamma_2, \dots, \gamma_k\}$ such that for all $i \in \{1,2,\dots,k\}$, we have that $\gamma_i$ ends where $\gamma_{i+1}$ starts ($\gamma_{k+1} := \gamma_1$). In turn, $\chi$ has one and only one extended heart $\mathcal{H}$. If there was no such $\mathcal{H}'$, $\chi$ wouldn't be complete. If there was another collection of arcs that enclose the inner boundary, then these arcs would cross those in $\mathcal{H}'$ and $\chi$ would not be exceptional. 
\end{rem}

\begin{rem}
The reason the set $\mathcal{H}'$ is called a heart is because the modules in this set form the heart of a $t$-structure on a subcategory of $\mathscr{D}^b(\text{rep}\Bbbk Q)$ that is of type $\tilde{\mathbb{A}}$. In a future paper, we will prove this, and moreover, use it to define hearts of exceptional sets in general. It is also nice that the set $\mathcal{H}'$ often looks like a heart when drawn on the annulus! 
\end{rem}

\begin{exmp}
Let $Q$ be the quiver from Example \ref{exam: complete fan}. Then we have two exceptional collections given by the following arc diagrams:

\begin{center}

\tikzset{every picture/.style={line width=0.75pt}} 

\begin{tikzpicture}[x=0.75pt,y=0.75pt,yscale=-1,xscale=1]

\draw   (96.01,99.25) .. controls (96.01,81.19) and (111.06,66.55) .. (129.64,66.55) .. controls (148.21,66.55) and (163.27,81.19) .. (163.27,99.25) .. controls (163.27,117.31) and (148.21,131.95) .. (129.64,131.95) .. controls (111.06,131.95) and (96.01,117.31) .. (96.01,99.25)(46.96,99.25) .. controls (46.96,54.1) and (83.97,17.5) .. (129.64,17.5) .. controls (175.3,17.5) and (212.32,54.1) .. (212.32,99.25) .. controls (212.32,144.4) and (175.3,181) .. (129.64,181) .. controls (83.97,181) and (46.96,144.4) .. (46.96,99.25) ;

\draw [color={rgb, 255:red, 22; green, 48; blue, 226 }  ,draw opacity=1 ]   (129.64,66.55) .. controls (222.21,-11) and (213.22,272.5) .. (46.96,99.25) ;
\draw [color={rgb, 255:red, 22; green, 48; blue, 226 }  ,draw opacity=1 ]   (46.96,99.25) .. controls (73.92,65.5) and (88.9,49) .. (129.64,66.55) ;
\draw [color={rgb, 255:red, 0; green, 0; blue, 0 }  ,draw opacity=1 ]   (129.64,66.55) .. controls (192.25,40) and (195.24,157) .. (129.64,133.04) ;
\draw [color={rgb, 255:red, 0; green, 0; blue, 0 }  ,draw opacity=1 ]   (46.96,99.25) .. controls (90.39,194.5) and (180.27,190) .. (212.32,99.25) ;
\draw   (341.52,100.25) .. controls (341.52,82.19) and (356.58,67.55) .. (375.15,67.55) .. controls (393.73,67.55) and (408.78,82.19) .. (408.78,100.25) .. controls (408.78,118.31) and (393.73,132.95) .. (375.15,132.95) .. controls (356.58,132.95) and (341.52,118.31) .. (341.52,100.25)(292.47,100.25) .. controls (292.47,55.1) and (329.49,18.5) .. (375.15,18.5) .. controls (420.82,18.5) and (457.83,55.1) .. (457.83,100.25) .. controls (457.83,145.4) and (420.82,182) .. (375.15,182) .. controls (329.49,182) and (292.47,145.4) .. (292.47,100.25) ;

\draw [color={rgb, 255:red, 22; green, 48; blue, 226 }  ,draw opacity=1 ]   (375.15,67.55) .. controls (467.72,-10) and (458.73,273.5) .. (292.47,100.25) ;
\draw [color={rgb, 255:red, 22; green, 48; blue, 226 }  ,draw opacity=1 ]   (292.47,100.25) .. controls (319.43,66.5) and (334.41,50) .. (375.15,67.55) ;
\draw [color={rgb, 255:red, 144; green, 19; blue, 254 }  ,draw opacity=1 ]   (375.15,66.46) .. controls (410,2) and (442,72) .. (457.83,100.25) ;
\draw [color={rgb, 255:red, 0; green, 0; blue, 0 }  ,draw opacity=1 ]   (375.15,67.55) .. controls (305,57) and (324,149) .. (375.15,134.04) ;

\draw (228.52,89.83) node [anchor=north west][inner sep=0.75pt]    {$1$};
\draw (135.62,108.15) node [anchor=north west][inner sep=0.75pt]    {$2$};
\draw (111.66,75.45) node [anchor=north west][inner sep=0.75pt]    {$4$};
\draw (21.48,90.9) node [anchor=north west][inner sep=0.75pt]    {$3$};
\draw (474.03,90.83) node [anchor=north west][inner sep=0.75pt]    {$1$};
\draw (381.14,109.15) node [anchor=north west][inner sep=0.75pt]    {$2$};
\draw (357.17,76.45) node [anchor=north west][inner sep=0.75pt]    {$4$};
\draw (267,91.9) node [anchor=north west][inner sep=0.75pt]    {$3$};

\draw    (129.64,65.46) ;
\draw [shift={(129.64,65.46)}, rotate = 0] [color={rgb, 255:red, 0; green, 0; blue, 0 }  ][fill={rgb, 255:red, 0; green, 0; blue, 0 }  ][line width=0.75]      (0, 0) circle [x radius= 3.35, y radius= 3.35]   ;
\draw    (212.32,99.25) ;
\draw [shift={(212.32,99.25)}, rotate = 0] [color={rgb, 255:red, 0; green, 0; blue, 0 }  ][fill={rgb, 255:red, 0; green, 0; blue, 0 }  ][line width=0.75]      (0, 0) circle [x radius= 3.35, y radius= 3.35]   ;
\draw    (129.64,133.04) ;
\draw [shift={(129.64,133.04)}, rotate = 0] [color={rgb, 255:red, 0; green, 0; blue, 0 }  ][fill={rgb, 255:red, 0; green, 0; blue, 0 }  ][line width=0.75]      (0, 0) circle [x radius= 3.35, y radius= 3.35]   ;
\draw    (46.96,99.25) ;
\draw [shift={(46.96,99.25)}, rotate = 0] [color={rgb, 255:red, 0; green, 0; blue, 0 }  ][fill={rgb, 255:red, 0; green, 0; blue, 0 }  ][line width=0.75]      (0, 0) circle [x radius= 3.35, y radius= 3.35]   ;
\draw    (375.15,66.46) ;
\draw [shift={(375.15,66.46)}, rotate = 0] [color={rgb, 255:red, 0; green, 0; blue, 0 }  ][fill={rgb, 255:red, 0; green, 0; blue, 0 }  ][line width=0.75]      (0, 0) circle [x radius= 3.35, y radius= 3.35]   ;
\draw    (457.83,100.25) ;
\draw [shift={(457.83,100.25)}, rotate = 0] [color={rgb, 255:red, 0; green, 0; blue, 0 }  ][fill={rgb, 255:red, 0; green, 0; blue, 0 }  ][line width=0.75]      (0, 0) circle [x radius= 3.35, y radius= 3.35]   ;
\draw    (375.15,134.04) ;
\draw [shift={(375.15,134.04)}, rotate = 0] [color={rgb, 255:red, 0; green, 0; blue, 0 }  ][fill={rgb, 255:red, 0; green, 0; blue, 0 }  ][line width=0.75]      (0, 0) circle [x radius= 3.35, y radius= 3.35]   ;
\draw    (292.47,100.25) ;
\draw [shift={(292.47,100.25)}, rotate = 0] [color={rgb, 255:red, 0; green, 0; blue, 0 }  ][fill={rgb, 255:red, 0; green, 0; blue, 0 }  ][line width=0.75]      (0, 0) circle [x radius= 3.35, y radius= 3.35]   ;

\end{tikzpicture}
\end{center}
\vspace{-2cm}
In the above exceptional sets, the blue arcs are in the heart $\mathcal{H}'$, and the purple arc is in the extended heart $\mathcal{H}$. The black arcs are in neither $\mathcal{H}$ nor $\mathcal{H}'$.
\end{exmp}

\begin{lem} \label{lem: affine A subquiver with non-regular source} 
$Q^{\mathcal{H}}$ is an acyclic quiver of type $\tilde{\mathbb{A}}$ in which all sources are non-regular modules.
\end{lem}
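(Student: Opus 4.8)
The plan is to transport the whole question to the exceptional arc diagram $D_{\mathcal H}$ and to read off $Q^{\mathcal H}$ from its geometry via Lemma~\ref{lem: arrow and immediately clockwise}, which identifies the arrows of the Hom-Ext quiver with the immediate-successor relations of arcs at shared endpoints. Acyclicity comes for free: since $\mathcal H\subseteq\chi$ and any subcollection of an exceptional collection is again exceptional, Lemma~\ref{lem: Hom-Ext quiver acyclic} applies verbatim to $\mathcal H$ and shows that $Q^{\mathcal H}$ has no loops or oriented cycles. So the two genuine tasks are to identify the underlying graph as a single cycle and to locate the sources.

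To see that $Q^{\mathcal H}$ is of type $\tilde{\mathbb A}$, I would use Remark~\ref{rem: heart well-defined}: the heart $\mathcal H'=\{\gamma_1,\dots,\gamma_k\}$ is a chain of arcs encircling the inner boundary with $\gamma_i$ ending where $\gamma_{i+1}$ begins, and the extended heart $\mathcal H$ adjoins exactly the arcs through which the maps between consecutive $\gamma_i$ factor. By Proposition~\ref{prop: clockwise arcs and hom} and Lemma~\ref{lem: arrow and immediately clockwise} these factoring arcs are precisely the arcs sitting inside the fan at each shared endpoint $p_{i+1}$, between $\gamma_i$ and $\gamma_{i+1}$. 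Each heart arc is then an extreme arc of the two fans it bounds, so it has exactly one neighbor at each of its endpoints, while each factoring arc sits in the interior of a single fan and is a leaf at its other endpoint. Tracing the induced directed fan-path through each fan and passing from one fan to the next along the shared heart arc, I would assemble a single oriented cycle on $|\mathcal H|$ vertices, which is the desired $\tilde{\mathbb A}$ shape.

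For the sources I would argue in two stages. First, no factoring arc can be a source: if $Z\in\mathcal H\setminus\mathcal H'$ then by Definition~\ref{defn: heart} some nonzero morphism or extension between heart modules $A,B\in\mathcal H'$ factors as $A\to Z\to B$ with $Z\neq A$; the first leg $A\to Z$ is a nonzero non-isomorphism and hence (by Cases 1--3 of Definition~\ref{defn: Hom-Ext quiver in repQ}) gives a nonzero path of positive length into $Z$, so $Z$ carries an incoming arrow. Thus every source lies in $\mathcal H'$. Second, I would show that every arc of $\mathcal H'$ is a bridging arc: under the bijection $\varphi$ of Section~\ref{sec: prelim} the bridging arcs are exactly the preprojective/preinjective (non-regular) modules, while exterior arcs are regular. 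An exterior arc on the outer boundary is homotopic to the outer boundary and so is redundant for enclosing the inner boundary, contradicting minimality of $\mathcal H'$; and any cycle-free collection enclosing the inner boundary through exterior arcs on the inner boundary would have to wrap around it and hence realize one of the forbidden cycles of an exceptional arc diagram. The duality of Remark~\ref{rem: Duality on Annulus}, interchanging the two boundary components, handles the two symmetric cases uniformly. Combining the two stages, every source lies in $\mathcal H'$ and is bridging, hence non-regular.

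The hard part will be the geometric bookkeeping in the last two steps. For the $\tilde{\mathbb A}$ claim the delicate point is ruling out chords, i.e.\ verifying that the factoring arcs have \emph{unshared} outer endpoints so that each vertex has degree exactly two and the whole configuration is one connected cycle rather than a cycle with pendant fans. For the source claim the subtle point is that exterior arcs on the inner boundary genuinely contribute to winding around the inner boundary, so their exclusion cannot come from a naive redundancy argument and must instead be forced by the no-cycle condition on an exceptional arc diagram; making that implication precise is where I expect the real work to lie.
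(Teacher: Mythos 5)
Your acyclicity argument and your assembly of the underlying graph into a single band match the paper's route in substance, and your first stage for the source claim (a factoring arc $Z\in\mathcal H\setminus\mathcal H'$ always receives an arrow from its immediately clockwise predecessor in the relevant fan, so sources lie in $\mathcal H'$) is sound. The gap is in your second stage: the claim that \emph{every arc of $\mathcal H'$ is a bridging arc} is false. The heart is a closed end-to-end chain of arcs encircling the inner boundary, and such a chain can perfectly well pass through an exterior arc; that arc is then an essential link of the chain, not a ``redundant'' one, so minimality of $\mathcal H'$ does not exclude it. The paper itself furnishes a counterexample: for the quiver $1\to 2\to 3$ with the extra arrow $1\to 3$, the set $\{S_1,S_2,S_3\}$ is exceptional (it appears in the example following Lemma \ref{lem: Hom-Ext quiver is uniquely determined by one non-regular}), its arc diagram is the closed chain $a(3,1)[0],\,a(1,2)[0],\,a(2,3)[0]$, so the heart is all three arcs, and $a(1,2)[0]=S_2$ is an exterior arc on the outer boundary, i.e.\ a regular module sitting inside $\mathcal H'$. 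Your two auxiliary justifications both fail on this example: deleting the exterior arc breaks the chain rather than leaving an enclosing subset, and the chain does not realize a forbidden cycle because the ``no cycles'' condition concerns the specific clockwise configurations of Figure \ref{fig: 6 possible local configurations on Annulus}, not mere end-to-end concatenation.

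What the lemma actually asserts, and what the paper proves, is strictly weaker than your stage-2 claim: a regular module may lie in $\mathcal H$ (even in $\mathcal H'$), but it can never be a \emph{source} of $Q^{\mathcal H}$. In the example above $S_2$ has the incoming arrow $S_1\to S_2$, consistent with this. The paper's argument is a local one that you would still need to supply: if a regular $X$ were a source of the band $Q^{\mathcal H}$, both of its band-neighbours would be immediately counterclockwise of the exterior arc $\gamma_X$ at its two endpoints, and inspection of the two possible local pictures shows those neighbouring arcs cannot belong to $\mathcal H$ (they are neither heart arcs nor arcs through which a map between heart modules factors), contradicting the fact that a source of an $\tilde{\mathbb A}$-shaped quiver has two outgoing arrows inside $Q^{\mathcal H}$. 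Without this local analysis, or some substitute for it, your proof does not establish the statement.
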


\begin{proof}
Since the arcs in $\mathcal{H}'$ enclose the inner boundary, the underlying graph of $Q^{\mathcal{H}'}$ is a band. Since $\chi$ is exceptional, by Lemma \ref{lem: Hom-Ext quiver acyclic}, we have that $Q^{\mathcal{H}'}$ is acyclic. Since the morphisms and extensions in $Q^{\mathcal{H}'}$ factor through the rest of the modules in $\mathcal{H}$, we have that $Q^{\mathcal{H}}$ is also an acyclic band. Thus $Q^{\mathcal{H}}$ is of type $\tilde{\mathbb{A}}$. 

Suppose now that $X \in \mathcal{H}$ is a source in $Q^{\mathcal{H}}$ and that $X$ is regular. Then in $Q^{\mathcal{H}}$, there are two arrows out of $X$ (the black arc in the diagrams below) and the diagram locally and up to homotopy, looks as one of the following:

\begin{center}

\tikzset{every picture/.style={line width=0.75pt}} 

\begin{tikzpicture}[x=0.75pt,y=0.75pt,yscale=-1,xscale=1]

\draw   (95.01,100.25) .. controls (95.01,82.19) and (110.06,67.55) .. (128.64,67.55) .. controls (147.21,67.55) and (162.27,82.19) .. (162.27,100.25) .. controls (162.27,118.31) and (147.21,132.95) .. (128.64,132.95) .. controls (110.06,132.95) and (95.01,118.31) .. (95.01,100.25)(45.96,100.25) .. controls (45.96,55.1) and (82.97,18.5) .. (128.64,18.5) .. controls (174.3,18.5) and (211.32,55.1) .. (211.32,100.25) .. controls (211.32,145.4) and (174.3,182) .. (128.64,182) .. controls (82.97,182) and (45.96,145.4) .. (45.96,100.25) ;

\draw    (45.96,100.25) .. controls (61,116) and (72,141) .. (128.64,182) ;
\draw [color={rgb, 255:red, 208; green, 2; blue, 27 }  ,draw opacity=1 ]   (89,163) .. controls (104,177) and (108,171) .. (128.64,182) ;
\draw [color={rgb, 255:red, 22; green, 48; blue, 226 }  ,draw opacity=1 ]   (45.96,100.25) .. controls (54,85) and (63,79) .. (68,75) ;
\draw   (300.01,100.33) .. controls (300.01,82.27) and (315.06,67.63) .. (333.64,67.63) .. controls (352.21,67.63) and (367.27,82.27) .. (367.27,100.33) .. controls (367.27,118.39) and (352.21,133.03) .. (333.64,133.03) .. controls (315.06,133.03) and (300.01,118.39) .. (300.01,100.33)(250.96,100.33) .. controls (250.96,55.18) and (287.97,18.58) .. (333.64,18.58) .. controls (379.3,18.58) and (416.32,55.18) .. (416.32,100.33) .. controls (416.32,145.48) and (379.3,182.08) .. (333.64,182.08) .. controls (287.97,182.08) and (250.96,145.48) .. (250.96,100.33) ;

\draw    (333.64,66.54) .. controls (377,30) and (434,147) .. (333.64,133.03) ;
\draw [color={rgb, 255:red, 208; green, 2; blue, 27 }  ,draw opacity=1 ]   (333.64,133.03) .. controls (369,135) and (373,111) .. (376,104) ;
\draw [color={rgb, 255:red, 22; green, 48; blue, 226 }  ,draw opacity=1 ]   (333.64,66.54) .. controls (341.68,51.29) and (350.68,45.29) .. (355.68,41.29) ;

\draw (186.97,168.47) node [anchor=north west][inner sep=0.75pt]  [rotate=-308.4]  {$\dotsc $};
\draw (76.08,171.15) node [anchor=north west][inner sep=0.75pt]  [rotate=-36.54]  {$\dotsc $};
\draw (157.43,86.9) node [anchor=north west][inner sep=0.75pt]  [rotate=-89.26]  {$\dotsc $};
\draw (40.12,118.09) node [anchor=north west][inner sep=0.75pt]  [rotate=-67.78]  {$\dotsc $};
\draw (97.6,112.11) node [anchor=north west][inner sep=0.75pt]  [rotate=-269.13]  {$\dotsc $};
\draw (114.99,9.41) node [anchor=north west][inner sep=0.75pt]  [rotate=-359.95]  {$\dotsc $};
\draw (322.06,10.37) node [anchor=north west][inner sep=0.75pt]  [rotate=-0.47]  {$\dotsc $};
\draw (310.08,108.23) node [anchor=north west][inner sep=0.75pt]  [rotate=-36.54]  {$\dotsc $};
\draw (340.16,120.61) node [anchor=north west][inner sep=0.75pt]  [rotate=-319.99]  {$\dotsc $};
\draw (324.08,184.34) node [anchor=north west][inner sep=0.75pt]  [rotate=-0.59]  {$\dotsc $};
\draw (358.68,93.6) node [anchor=north west][inner sep=0.75pt]  [rotate=-223.21]  {$\dotsc $};
\draw (303.47,86.04) node [anchor=north west][inner sep=0.75pt]  [rotate=-314.45]  {$\dotsc $};

\draw    (128.64,66.46) ;
\draw [shift={(128.64,66.46)}, rotate = 0] [color={rgb, 255:red, 0; green, 0; blue, 0 }  ][fill={rgb, 255:red, 0; green, 0; blue, 0 }  ][line width=0.75]      (0, 0) circle [x radius= 3.35, y radius= 3.35]   ;
\draw    (211.32,100.25) ;
\draw [shift={(211.32,100.25)}, rotate = 0] [color={rgb, 255:red, 0; green, 0; blue, 0 }  ][fill={rgb, 255:red, 0; green, 0; blue, 0 }  ][line width=0.75]      (0, 0) circle [x radius= 3.35, y radius= 3.35]   ;
\draw    (128.64,132.95) ;
\draw [shift={(128.64,132.95)}, rotate = 0] [color={rgb, 255:red, 0; green, 0; blue, 0 }  ][fill={rgb, 255:red, 0; green, 0; blue, 0 }  ][line width=0.75]      (0, 0) circle [x radius= 3.35, y radius= 3.35]   ;
\draw    (45.96,100.25) ;
\draw [shift={(45.96,100.25)}, rotate = 0] [color={rgb, 255:red, 0; green, 0; blue, 0 }  ][fill={rgb, 255:red, 0; green, 0; blue, 0 }  ][line width=0.75]      (0, 0) circle [x radius= 3.35, y radius= 3.35]   ;
\draw    (128.64,182) ;
\draw [shift={(128.64,182)}, rotate = 0] [color={rgb, 255:red, 0; green, 0; blue, 0 }  ][fill={rgb, 255:red, 0; green, 0; blue, 0 }  ][line width=0.75]      (0, 0) circle [x radius= 3.35, y radius= 3.35]   ;
\draw    (64,152) ;
\draw [shift={(64,152)}, rotate = 0] [color={rgb, 255:red, 0; green, 0; blue, 0 }  ][fill={rgb, 255:red, 0; green, 0; blue, 0 }  ][line width=0.75]      (0, 0) circle [x radius= 3.35, y radius= 3.35]   ;

\draw    (333.64,66.54) ;
\draw [shift={(333.64,66.54)}, rotate = 0] [color={rgb, 255:red, 0; green, 0; blue, 0 }  ][fill={rgb, 255:red, 0; green, 0; blue, 0 }  ][line width=0.75]      (0, 0) circle [x radius= 3.35, y radius= 3.35]   ;
\draw    (416.32,100.33) ;
\draw [shift={(416.32,100.33)}, rotate = 0] [color={rgb, 255:red, 0; green, 0; blue, 0 }  ][fill={rgb, 255:red, 0; green, 0; blue, 0 }  ][line width=0.75]      (0, 0) circle [x radius= 3.35, y radius= 3.35]   ;
\draw    (333.64,133.03) ;
\draw [shift={(333.64,133.03)}, rotate = 0] [color={rgb, 255:red, 0; green, 0; blue, 0 }  ][fill={rgb, 255:red, 0; green, 0; blue, 0 }  ][line width=0.75]      (0, 0) circle [x radius= 3.35, y radius= 3.35]   ;
\draw    (250.96,100.33) ;
\draw [shift={(250.96,100.33)}, rotate = 0] [color={rgb, 255:red, 0; green, 0; blue, 0 }  ][fill={rgb, 255:red, 0; green, 0; blue, 0 }  ][line width=0.75]      (0, 0) circle [x radius= 3.35, y radius= 3.35]   ;
\draw    (300.01,100.33) ;
\draw [shift={(300.01,100.33)}, rotate = 0] [color={rgb, 255:red, 0; green, 0; blue, 0 }  ][fill={rgb, 255:red, 0; green, 0; blue, 0 }  ][line width=0.75]      (0, 0) circle [x radius= 3.35, y radius= 3.35]   ;
\draw    (367.27,100.33) ;
\draw [shift={(367.27,100.33)}, rotate = 0] [color={rgb, 255:red, 0; green, 0; blue, 0 }  ][fill={rgb, 255:red, 0; green, 0; blue, 0 }  ][line width=0.75]      (0, 0) circle [x radius= 3.35, y radius= 3.35]   ;

\end{tikzpicture}

\end{center}

From this we see that by Definition \ref{defn: heart}, the red arcs are not in $\mathcal{H}$, contradicting the fact that $X$ is a source {\color{black} in $Q^{\mathcal{H}}$}.  
\end{proof}

\begin{rem}\label{rem: every Hom-Ext quiver has a cycle}
Let $\gamma$ be a collection of non-self-intersecting arcs on $A_Q$. Then the underlying graph of $Q^{\gamma}$ is not a tree if and only if there are arcs $\gamma_1,\gamma_2, \dots, \gamma_j \in \gamma$ such that for all $i \in \{1,2,\dots,j\}$, we have that $\gamma_i$ ends where $\gamma_{i+1}$ starts ($\gamma_{j+1} := \gamma_1$). When we take $\chi$ to be exceptional, every such set of arcs in the arc diagram must enclose the inner boundary, for if not, there would be a cycle. Moreover, by Remark \ref{rem: heart well-defined}, we know that there can only be one such set of arcs. We conclude that for an exceptional collection $\chi$, the Hom-Ext quiver $Q^{\chi}$ is a gentle quiver with \textit{precisely} one acyclic band that corresponds to the {\color{black} extended} heart $\mathcal{H} \subset \chi$ (Definition \ref{defn: heart}).   
\end{rem}

The next important lemma is quite surprising and speaks to just how powerful knowing the Hom-Ext quiver is.

\begin{lem} \label{lem: Hom-Ext quiver is uniquely determined by one non-regular} 
Let {\color{black}$(Q',R)$} be a Hom-Ext quiver that can be associated to an exceptional collection of {\color{black} representations of} $Q^{\bm{\varepsilon}}$. Fix a labeling of the vertices of $Q'$ from the set $\{1,2,\dots, n\}$. Then there is an $i \in \{1,2,\dots, n\}$ such that the module at vertex $i$ must be non-regular {\color{black} and a potentially empty subset $\{j_1, j_2, \dots, j_k\}\subset \{1,2,\dots, n\}$ such that the modules at these vertices are regular.} Once we choose this non-regular module $X_i$ at vertex $i$ {\color{black} and the regular modules $X_{j_1}, \dots, X_{j_k}$ at vertices $j_1, \dots, j_k$,} there is a unique way to associate modules to the remaining vertices of $Q'$.
\end{lem}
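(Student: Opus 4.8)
The plan is to establish the two halves of the statement separately: first that the combinatorics of $(Q',R)$ pins down a vertex whose module cannot be regular, and then that fixing the module there rigidifies the whole arc diagram.

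For the existence of $i$, I would argue entirely inside $(Q',R)$. By Remark~\ref{rem: every Hom-Ext quiver has a cycle} the underlying graph of $Q'$ has exactly one cycle, the band $Q^{\mathcal{H}}$ coming from the extended heart, and, since a chord would produce a second cycle, this band carries no chords and is therefore visible in $(Q',R)$ as the unique cyclic subquiver. By Lemma~\ref{lem: affine A subquiver with non-regular source}, $Q^{\mathcal{H}}$ is an acyclic orientation of a cycle of type $\tilde{\mathbb{A}}$ all of whose sources are non-regular. An acyclic orientation of a cycle always has a source, and its location is determined by the orientation data already present in $Q'$; I would take $i$ to be the label of any such source. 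Lemma~\ref{lem: affine A subquiver with non-regular source} then forces the module at $i$ to be non-regular in every realization, which proves the first claim.

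For uniqueness I would reconstruct the arc diagram $D_\chi$ from the single fully-known arc $\gamma_{X_i}$ by propagating along $Q'$. The key inputs are Lemma~\ref{lem: arrow and immediately clockwise}, which says each arrow records an immediate-successor relation between two arcs at a shared marked point, and Proposition~\ref{prop: H-E quiver is tiling algebra}, whose gentle relations $R$ sort, at each vertex, its incident arrows into the fan at its start and the fan at its end. From $(Q',R)$ alone I can thus read off the complete fan at every marked point as a maximal nonzero path of composable arrows. Because $i$ is a band source, $\gamma_{X_i}$ is a bridging arc (non-regular corresponds to bridging under the bijection $\varphi$), so its two endpoints, the boundary components carrying them, and its winding number are all fixed. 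I would then process the arcs outward from the band, each new arc $\delta$ sharing an endpoint $p$ with an already-determined arc $\gamma$ as an immediate successor; the passage from $\gamma$ to $\delta$ is a single hook or cohook move, and which move it is (hook versus cohook, and at the start or end of $\gamma$) is dictated by the orientation of the connecting arrow and by which fan of $\gamma$ it belongs to, all recorded in $(Q',R)$. Applying a determined hook/cohook move to a completely known arc yields a completely known arc, so $\delta$ is forced, while the non-crossing and acyclicity constraints of an exceptional arc diagram (Theorem~\ref{thm: ec bijection with arc diagrams}, Proposition~\ref{prop: clockwise arcs and hom}) guarantee consistency; endpoints on the inner boundary are handled by the duality of Remark~\ref{rem: Duality on Annulus}.

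The hard part will be ruling out any residual global freedom. The combinatorics of $(Q',R)$ is invariant under a simultaneous Dehn twist of all arcs, which preserves every fan while shifting the winding numbers of the bridging arcs, so I must check that fixing $\gamma_{X_i}$ removes exactly this one-parameter ambiguity. I would do this by reconstructing the band $Q^{\mathcal{H}}$ first: starting from $\gamma_{X_i}$ and going once around the cycle, each successive heart arc begins where the previous one ends (Remark~\ref{rem: heart well-defined}), and the requirement that the heart arcs close up into a single loop enclosing the inner boundary, with its total winding anchored by the fixed winding number of $\gamma_{X_i}$, forces the far endpoint and winding of each heart arc in turn. With the band pinned there is no twist left to perform, and every peripheral arc is then determined by the hook/cohook propagation above. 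Assembling these steps yields a unique assignment of modules to all vertices of $Q'$, completing the argument.
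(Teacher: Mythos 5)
Your existence argument is fine, and in fact sharper than the paper's: the paper only notes that every exceptional arc diagram has at least two bridging arcs, whereas you locate $i$ at a source of the unique band subquiver and invoke Lemma \ref{lem: affine A subquiver with non-regular source}, which is exactly how the lemma is used later in Theorem \ref{thm: iso H-E quiver iff Dehn twist}. The uniqueness half, however, has two concrete gaps. First, your propagation step is not forced: knowing a complete arc $\gamma$ and knowing that $\delta$ is its immediate successor in the fan at a marked point $p$ determines only the shared endpoint $p$ and the position of $\delta$ in the ordering at $p$; it does not determine the other endpoint of $\delta$ or its winding number, and there is no ``single hook or cohook move'' carrying $\gamma$ to $\delta$ encoded in $(Q',R)$ --- hooks and cohooks describe irreducible maps in the ambient module category, not adjacency of arbitrary arcs in a fan. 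Already in Example \ref{exam: complete fan} the fan at the marked point $3$ contains the adjacent arcs $a(4,3)[0]$ and $a(1,3)[0]$, whose far endpoints differ; nothing local at $3$ tells you where the second one lands. Second, your fix for the residual twist freedom --- that the heart arcs must close up into a loop of total winding one anchored by $\gamma_{X_i}$ --- is a single global constraint; for a heart with $k\ge 3$ arcs it cannot determine the $k-1$ unknown far endpoints and windings ``in turn,'' so that step does not follow as written.

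The paper closes exactly these gaps by a different mechanism: induction on the number of vertices. In each local configuration one collapses a marked point (or a quasi-simple regular arc, or a boundary arc) to pass to an exceptional arc diagram of type $\tilde{\mathbb{A}}_{k-1}$ whose Hom-Ext quiver is the corresponding subquotient of $(Q',R)$, applies the inductive hypothesis there, and then recovers the deleted module using Lemma \ref{lem: uniqueness in exceptional sequence}, the uniqueness of the completion of an exceptional sequence. That lemma is the essential representation-theoretic input replacing your unforced propagation step, and your proposal never invokes it. If you want to keep a direct reconstruction, you would at minimum need to prove that the fan data of $(Q',R)$ together with one fixed bridging arc determines the cyclic order of all marked points and the far endpoint of every arc --- which is essentially the full content of the lemma, not a consistency check.
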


\begin{proof}
 The existence of a non-regular module follows from the fact that any exceptional arc diagram contains at least two bridging arcs. To prove the uniqueness part, we will proceed by {\color{black} double induction. We will first induct on the number of vertices on the outer boundary of the annulus, then on the number of vertices on the inner boundary of the annulus.}

{\color{black} Suppose that there is one vertex on the inner boundary and one on the outer.} By Remark \ref{rem: every Hom-Ext quiver has a cycle}, we are studying exceptional collections of the Kronecker quiver: $1 \rightrightarrows 2$ up to duality. By Theorem 6.4 in \cite{igusa2024clusters}, up to $2\pi$ Dehn twist, there is one such exceptional collection:

\begin{center}
\tikzset{every picture/.style={line width=0.75pt}} 
\begin{tikzpicture}[x=0.75pt,y=0.75pt,yscale=-1,xscale=1]

\draw   (284.01,99.25) .. controls (284.01,81.19) and (299.06,66.55) .. (317.64,66.55) .. controls (336.21,66.55) and (351.27,81.19) .. (351.27,99.25) .. controls (351.27,117.31) and (336.21,131.95) .. (317.64,131.95) .. controls (299.06,131.95) and (284.01,117.31) .. (284.01,99.25)(234.96,99.25) .. controls (234.96,54.1) and (271.97,17.5) .. (317.64,17.5) .. controls (363.3,17.5) and (400.32,54.1) .. (400.32,99.25) .. controls (400.32,144.4) and (363.3,181) .. (317.64,181) .. controls (271.97,181) and (234.96,144.4) .. (234.96,99.25) ;
\draw    (317.64,65.46) ;
\draw [shift={(317.64,65.46)}, rotate = 0] [color={rgb, 255:red, 0; green, 0; blue, 0 }  ][fill={rgb, 255:red, 0; green, 0; blue, 0 }  ][line width=0.75]      (0, 0) circle [x radius= 3.35, y radius= 3.35]   ;
\draw    (317.64,181) ;
\draw [shift={(317.64,181)}, rotate = 0] [color={rgb, 255:red, 0; green, 0; blue, 0 }  ][fill={rgb, 255:red, 0; green, 0; blue, 0 }  ][line width=0.75]      (0, 0) circle [x radius= 3.35, y radius= 3.35]   ;
\draw    (317.64,181) .. controls (235,105) and (259,42) .. (317.64,65.46) ;
\draw    (317.64,65.46) .. controls (360,39) and (417,80) .. (317.64,181) ;

\draw (319.64,184.4) node [anchor=north west][inner sep=0.75pt]    {$1$};
\draw (319.64,69.95) node [anchor=north west][inner sep=0.75pt]    {$2$};
\end{tikzpicture}
\end{center}

In turn, since Dehn twists preserve the corresponding tiling algebra and hence the corresponding Hom-Ext quiver by Proposition \ref{prop: H-E quiver is tiling algebra}, there is only one Hom-Ext quiver {\color{black}$(Q',R)$} up to {\color{black} isomorphism}: $a \rightrightarrows b$, where $R = \emptyset$. By Theorem \ref{thm: linear extensions and exceptional sequences}, there is only one possible exceptional ordering of these vertices: $(a,b)$. By placing a non-regular module $X_i$ at vertex $a$ or $b$, the other vertex is uniquely determined by Lemma \ref{lem: uniqueness in exceptional sequence}.

{\color{black} Suppose the result holds for all Hom-Ext quivers over all possible acyclic quivers of type $\tilde{\mathbb{A}}_{k}$ whose corresponding annulus has $k$ marked points on the exterior boundary and $1$ marked point on the interior. Consider a quiver of type $\tilde{\mathbb{A}}_{k+1}$ whose corresponding annulus has $k+1$ marked points on the exterior boundary and one on the interior. If there are no regular modules, all arcs are bridging, and in this case, there is only one such collection up to $2\pi$ Dehn twist of the inner boundary component. By Lemma \ref{lem: affine A subquiver with non-regular source}, we know that there is an $X_i$ that is a source. Once this source is chosen, the remainder of the diagram is uniquely determined. 

In the case there are regular modules, there must be a quasi-simple, say $X_l$. We then perform the following homotopy along the arc corresponding to the quasi-simple regular so that the outer boundary component now has one less marked point:}

\begin{center}
\tikzset{every picture/.style={line width=0.75pt}} 
\begin{tikzpicture}[x=0.75pt,y=0.75pt,yscale=-1,xscale=1]
\draw   (87.01,100.25) .. controls (87.01,82.19) and (102.06,67.55) .. (120.64,67.55) .. controls (139.21,67.55) and (154.27,82.19) .. (154.27,100.25) .. controls (154.27,118.31) and (139.21,132.95) .. (120.64,132.95) .. controls (102.06,132.95) and (87.01,118.31) .. (87.01,100.25)(37.96,100.25) .. controls (37.96,55.1) and (74.97,18.5) .. (120.64,18.5) .. controls (166.3,18.5) and (203.32,55.1) .. (203.32,100.25) .. controls (203.32,145.4) and (166.3,182) .. (120.64,182) .. controls (74.97,182) and (37.96,145.4) .. (37.96,100.25) ;
\draw    (120.64,182) ;
\draw [shift={(120.64,182)}, rotate = 0] [color={rgb, 255:red, 0; green, 0; blue, 0 }  ][fill={rgb, 255:red, 0; green, 0; blue, 0 }  ][line width=0.75]      (0, 0) circle [x radius= 3.35, y radius= 3.35]   ;
\draw    (120.64,67.55) ;
\draw [shift={(120.64,67.55)}, rotate = 0] [color={rgb, 255:red, 0; green, 0; blue, 0 }  ][fill={rgb, 255:red, 0; green, 0; blue, 0 }  ][line width=0.75]      (0, 0) circle [x radius= 3.35, y radius= 3.35]   ;
\draw    (120.64,18.5) ;
\draw [shift={(120.64,18.5)}, rotate = 0] [color={rgb, 255:red, 0; green, 0; blue, 0 }  ][fill={rgb, 255:red, 0; green, 0; blue, 0 }  ][line width=0.75]      (0, 0) circle [x radius= 3.35, y radius= 3.35]   ;
\draw    (149,176.5) ;
\draw [shift={(149,176.5)}, rotate = 0] [color={rgb, 255:red, 0; green, 0; blue, 0 }  ][fill={rgb, 255:red, 0; green, 0; blue, 0 }  ][line width=0.75]      (0, 0) circle [x radius= 3.35, y radius= 3.35]   ;
\draw    (220,101) -- (339,101.49) ;
\draw [shift={(341,101.5)}, rotate = 180.24] [color={rgb, 255:red, 0; green, 0; blue, 0 }  ][line width=0.75]    (10.93,-3.29) .. controls (6.95,-1.4) and (3.31,-0.3) .. (0,0) .. controls (3.31,0.3) and (6.95,1.4) .. (10.93,3.29)   ;
\draw   (416.01,100.25) .. controls (416.01,82.19) and (431.06,67.55) .. (449.64,67.55) .. controls (468.21,67.55) and (483.27,82.19) .. (483.27,100.25) .. controls (483.27,118.31) and (468.21,132.95) .. (449.64,132.95) .. controls (431.06,132.95) and (416.01,118.31) .. (416.01,100.25)(366.96,100.25) .. controls (366.96,55.1) and (403.97,18.5) .. (449.64,18.5) .. controls (495.3,18.5) and (532.32,55.1) .. (532.32,100.25) .. controls (532.32,145.4) and (495.3,182) .. (449.64,182) .. controls (403.97,182) and (366.96,145.4) .. (366.96,100.25) ;
\draw    (449.64,182) ;
\draw [shift={(449.64,182)}, rotate = 0] [color={rgb, 255:red, 0; green, 0; blue, 0 }  ][fill={rgb, 255:red, 0; green, 0; blue, 0 }  ][line width=0.75]      (0, 0) circle [x radius= 3.35, y radius= 3.35]   ;
\draw    (449.64,67.55) ;
\draw [shift={(449.64,67.55)}, rotate = 0] [color={rgb, 255:red, 0; green, 0; blue, 0 }  ][fill={rgb, 255:red, 0; green, 0; blue, 0 }  ][line width=0.75]      (0, 0) circle [x radius= 3.35, y radius= 3.35]   ;
\draw    (449.64,18.5) ;
\draw [shift={(449.64,18.5)}, rotate = 0] [color={rgb, 255:red, 0; green, 0; blue, 0 }  ][fill={rgb, 255:red, 0; green, 0; blue, 0 }  ][line width=0.75]      (0, 0) circle [x radius= 3.35, y radius= 3.35]   ;
\draw    (120.64,182) .. controls (122,168) and (149,167) .. (149,176.5) ;

\draw (31.51,83.95) node [anchor=north west][inner sep=0.75pt]  [rotate=-89.6]  {$\dotsc $};
\draw (211.48,78.93) node [anchor=north west][inner sep=0.75pt]  [rotate=-89.48]  {$\dotsc $};
\draw (149,184.4) node [anchor=north west][inner sep=0.75pt]  [font=\footnotesize]  {$b_{i}$};
\draw (106,193.4) node [anchor=north west][inner sep=0.75pt]  [font=\footnotesize]  {$b_{i+1}$};
\draw (360.51,83.95) node [anchor=north west][inner sep=0.75pt]  [rotate=-89.6]  {$\dotsc $};
\draw (540.48,78.93) node [anchor=north west][inner sep=0.75pt]  [rotate=-89.48]  {$\dotsc $};
\draw (439,186.4) node [anchor=north west][inner sep=0.75pt]  [font=\footnotesize]  {$b_{i+1}$};
\draw (274,83.4) node [anchor=north west][inner sep=0.75pt]  [font=\footnotesize]  {$\pi $};
\end{tikzpicture}
\end{center}

{\color{black} All the regulars and one bridging arc have been determined in the diagram on the right. By the inductive hypothesis, all the modules $\pi(X_1), \pi(X_2), \dots, 
\pi(X_{l-1}), \pi(X_{l+1}), \dots \pi(X_{k+1})$, and their corresponding arcs, are uniquely determined. Now consider the arcs $X_1, X_2,
$ $\dots, X_{l-1}, X_l, X_{l+1}, \dots X_{k+1}$. All of the arcs $\pi(X_1), \pi(X_2), \dots, 
\pi(X_{l-1}), \pi(X_{l+1}), \dots \pi(X_{k+1})$ whose endpoint is not $b_{i+1}$ remain unchanged and hence are uniquely determined. Those which have an endpoint at $b_{i+1}$ are then determined by the existence of a relation avoiding path in $Q'$ either to $X_l$ or from $X_l$. So therefore, all the arcs are uniquely determined.}

{\color{black} Now the suppose the result holds for all Hom-Ext quivers over all acyclic quivers of type $\tilde{\mathbb{A}}_m$ with $p$ vertices on the outer boundary component and $k$ on the inner boundary component, and suppose we have a quiver of type $\tilde{\mathbb{A}}_{m+1}$ with $p$ vertices on the outer boundary component and $k+1$ on the inner boundary component. Suppose first that there are no regular modules. Then again, by Lemma \ref{lem: affine A subquiver with non-regular source}, suppose we fixed a source in the Hom-Ext quiver. Since this is a source in the Hom-Ext quiver, we can perform the following Homotopy:}

\begin{center}

\tikzset{every picture/.style={line width=0.75pt}} 

\begin{tikzpicture}[x=0.75pt,y=0.75pt,yscale=-1,xscale=1]

\draw   (87.01,100.25) .. controls (87.01,82.19) and (102.06,67.55) .. (120.64,67.55) .. controls (139.21,67.55) and (154.27,82.19) .. (154.27,100.25) .. controls (154.27,118.31) and (139.21,132.95) .. (120.64,132.95) .. controls (102.06,132.95) and (87.01,118.31) .. (87.01,100.25)(37.96,100.25) .. controls (37.96,55.1) and (74.97,18.5) .. (120.64,18.5) .. controls (166.3,18.5) and (203.32,55.1) .. (203.32,100.25) .. controls (203.32,145.4) and (166.3,182) .. (120.64,182) .. controls (74.97,182) and (37.96,145.4) .. (37.96,100.25) ;
\draw    (120.64,182) ;
\draw [shift={(120.64,182)}, rotate = 0] [color={rgb, 255:red, 0; green, 0; blue, 0 }  ][fill={rgb, 255:red, 0; green, 0; blue, 0 }  ][line width=0.75]      (0, 0) circle [x radius= 3.35, y radius= 3.35]   ;
\draw    (87.01,100.25) ;
\draw [shift={(87.01,100.25)}, rotate = 0] [color={rgb, 255:red, 0; green, 0; blue, 0 }  ][fill={rgb, 255:red, 0; green, 0; blue, 0 }  ][line width=0.75]      (0, 0) circle [x radius= 3.35, y radius= 3.35]   ;
\draw    (154.27,100.25) ;
\draw [shift={(154.27,100.25)}, rotate = 0] [color={rgb, 255:red, 0; green, 0; blue, 0 }  ][fill={rgb, 255:red, 0; green, 0; blue, 0 }  ][line width=0.75]      (0, 0) circle [x radius= 3.35, y radius= 3.35]   ;
\draw    (120.64,132.95) ;
\draw [shift={(120.64,132.95)}, rotate = 0] [color={rgb, 255:red, 0; green, 0; blue, 0 }  ][fill={rgb, 255:red, 0; green, 0; blue, 0 }  ][line width=0.75]      (0, 0) circle [x radius= 3.35, y radius= 3.35]   ;
\draw    (120.64,182) .. controls (148,165) and (157,129) .. (143,125.5) ;
\draw    (120.64,18.5) ;
\draw [shift={(120.64,18.5)}, rotate = 0] [color={rgb, 255:red, 0; green, 0; blue, 0 }  ][fill={rgb, 255:red, 0; green, 0; blue, 0 }  ][line width=0.75]      (0, 0) circle [x radius= 3.35, y radius= 3.35]   ;
\draw    (143,125.5) ;
\draw [shift={(143,125.5)}, rotate = 0] [color={rgb, 255:red, 0; green, 0; blue, 0 }  ][fill={rgb, 255:red, 0; green, 0; blue, 0 }  ][line width=0.75]      (0, 0) circle [x radius= 3.35, y radius= 3.35]   ;
\draw    (220,101) -- (339,101.49) ;
\draw [shift={(341,101.5)}, rotate = 180.24] [color={rgb, 255:red, 0; green, 0; blue, 0 }  ][line width=0.75]    (10.93,-3.29) .. controls (6.95,-1.4) and (3.31,-0.3) .. (0,0) .. controls (3.31,0.3) and (6.95,1.4) .. (10.93,3.29)   ;
\draw    (120.64,132.95) -- (120.64,182) ;
\draw    (66,161) .. controls (82,157) and (103,154) .. (120.64,132.95) ;
\draw    (66,161) ;
\draw [shift={(66,161)}, rotate = 0] [color={rgb, 255:red, 0; green, 0; blue, 0 }  ][fill={rgb, 255:red, 0; green, 0; blue, 0 }  ][line width=0.75]      (0, 0) circle [x radius= 3.35, y radius= 3.35]   ;
\draw   (427.01,99.25) .. controls (427.01,81.19) and (442.06,66.55) .. (460.64,66.55) .. controls (479.21,66.55) and (494.27,81.19) .. (494.27,99.25) .. controls (494.27,117.31) and (479.21,131.95) .. (460.64,131.95) .. controls (442.06,131.95) and (427.01,117.31) .. (427.01,99.25)(377.96,99.25) .. controls (377.96,54.1) and (414.97,17.5) .. (460.64,17.5) .. controls (506.3,17.5) and (543.32,54.1) .. (543.32,99.25) .. controls (543.32,144.4) and (506.3,181) .. (460.64,181) .. controls (414.97,181) and (377.96,144.4) .. (377.96,99.25) ;
\draw    (460.64,181) ;
\draw [shift={(460.64,181)}, rotate = 0] [color={rgb, 255:red, 0; green, 0; blue, 0 }  ][fill={rgb, 255:red, 0; green, 0; blue, 0 }  ][line width=0.75]      (0, 0) circle [x radius= 3.35, y radius= 3.35]   ;
\draw    (427.01,99.25) ;
\draw [shift={(427.01,99.25)}, rotate = 0] [color={rgb, 255:red, 0; green, 0; blue, 0 }  ][fill={rgb, 255:red, 0; green, 0; blue, 0 }  ][line width=0.75]      (0, 0) circle [x radius= 3.35, y radius= 3.35]   ;
\draw    (494.27,99.25) ;
\draw [shift={(494.27,99.25)}, rotate = 0] [color={rgb, 255:red, 0; green, 0; blue, 0 }  ][fill={rgb, 255:red, 0; green, 0; blue, 0 }  ][line width=0.75]      (0, 0) circle [x radius= 3.35, y radius= 3.35]   ;
\draw    (460.64,131.95) ;
\draw [shift={(460.64,131.95)}, rotate = 0] [color={rgb, 255:red, 0; green, 0; blue, 0 }  ][fill={rgb, 255:red, 0; green, 0; blue, 0 }  ][line width=0.75]      (0, 0) circle [x radius= 3.35, y radius= 3.35]   ;
\draw    (460.64,17.5) ;
\draw [shift={(460.64,17.5)}, rotate = 0] [color={rgb, 255:red, 0; green, 0; blue, 0 }  ][fill={rgb, 255:red, 0; green, 0; blue, 0 }  ][line width=0.75]      (0, 0) circle [x radius= 3.35, y radius= 3.35]   ;
\draw    (460.64,131.95) -- (460.64,181) ;
\draw    (406,160) .. controls (422,156) and (443,153) .. (460.64,131.95) ;
\draw    (406,160) ;
\draw [shift={(406,160)}, rotate = 0] [color={rgb, 255:red, 0; green, 0; blue, 0 }  ][fill={rgb, 255:red, 0; green, 0; blue, 0 }  ][line width=0.75]      (0, 0) circle [x radius= 3.35, y radius= 3.35]   ;

\draw (106.94,50.5) node [anchor=north west][inner sep=0.75pt]  [rotate=-359.56]  {$\dotsc $};
\draw (31.51,83.95) node [anchor=north west][inner sep=0.75pt]  [rotate=-89.6]  {$\dotsc $};
\draw (211.48,78.93) node [anchor=north west][inner sep=0.75pt]  [rotate=-89.48]  {$\dotsc $};
\draw (132,108.4) node [anchor=north west][inner sep=0.75pt]  [font=\footnotesize]  {$b_{i}$};
\draw (100,114.4) node [anchor=north west][inner sep=0.75pt]  [font=\footnotesize]  {$b_{i+1}$};
\draw (446.94,49.5) node [anchor=north west][inner sep=0.75pt]  [rotate=-359.56]  {$\dotsc $};
\draw (371.51,82.95) node [anchor=north west][inner sep=0.75pt]  [rotate=-89.6]  {$\dotsc $};
\draw (551.48,77.93) node [anchor=north west][inner sep=0.75pt]  [rotate=-89.48]  {$\dotsc $};
\draw (450,111.4) node [anchor=north west][inner sep=0.75pt]  [font=\footnotesize]  {$b_{i+1}$};
\draw (274,83.4) node [anchor=north west][inner sep=0.75pt]  [font=\footnotesize]  {$\pi $};

\end{tikzpicture}

\end{center}

{\color{black} The result follows by induction analogous to the previous case. Finally, if there are regular modules, then there must exist a quasi-simple regular module on the inner boundary component and the proof is again analogous. }
\end{proof}

{\color{black} Before proving the main result of this section, we need a pair of lemmas.

\begin{lem}\label{lem: iso sends simple to simple}
    Let $\varphi:(Q^{\chi_1},R_1) \rightarrow (Q^{\chi_2},R_2)$ be an isomorphism of Hom-Ext quivers where $\chi_1$ and $\chi_2$ are two exceptional collections of a quiver of type $\tilde{\mathbb{A}}_{n-1}$. If $X_i \in \chi_1$ is a quasi-simple module, so is $\varphi(X_i)$.
\end{lem}

\begin{proof}
    Suppose that $\varphi$ is an isomorphism of Hom-Ext quivers, $X_i\in \chi_1$ is a quasi-simple left regular module, and for contradiction, that $\varphi(X_i)$ is not quasi-simple. The proof when $X_i$ is right-regular is analogous. Since $\chi_2$ is complete, there exists a left regular module $\varphi(X_{i_1})$ such that there is a surjection $\varphi(X_i) \rightarrow \varphi(X_{i_1})$, or an injection $\varphi(X_{i_1}) \rightarrow \varphi(X_i)$. We will assume the later case, as the proof in the former case is analogous. Again, by completeness of $\chi_2$, there is a chain of morphisms $\varphi(X_{i_k}) \leftrightarrow \cdots \leftrightarrow \varphi(X_{i_2}) \leftrightarrow \varphi(X_{i_1}) \rightarrow \varphi(X_i)$ in $(Q^{\chi_2},R_2)$ where $\varphi(X_{i_k})$ is a quasi-simple module and the two headed arrows indicate that the morphism can go either left or right. Since $\varphi$ is an isomorphism of Hom-Ext quivers, we have an analogous chain $X_{i_k} \leftrightarrow \cdots \leftrightarrow X_{i_2} \leftrightarrow X_{i_1} \rightarrow X_i$  in $(Q^{\chi_1},R_1)$ as in the following diagram.

\begin{center}

\tikzset{every picture/.style={line width=0.75pt}} 

\begin{tikzpicture}[x=0.75pt,y=0.75pt,yscale=-1,xscale=1]

\draw   (87.01,100.25) .. controls (87.01,82.19) and (102.06,67.55) .. (120.64,67.55) .. controls (139.21,67.55) and (154.27,82.19) .. (154.27,100.25) .. controls (154.27,118.31) and (139.21,132.95) .. (120.64,132.95) .. controls (102.06,132.95) and (87.01,118.31) .. (87.01,100.25)(37.96,100.25) .. controls (37.96,55.1) and (74.97,18.5) .. (120.64,18.5) .. controls (166.3,18.5) and (203.32,55.1) .. (203.32,100.25) .. controls (203.32,145.4) and (166.3,182) .. (120.64,182) .. controls (74.97,182) and (37.96,145.4) .. (37.96,100.25) ;
\draw    (167.64,167) ;
\draw [shift={(167.64,167)}, rotate = 0] [color={rgb, 255:red, 0; green, 0; blue, 0 }  ][fill={rgb, 255:red, 0; green, 0; blue, 0 }  ][line width=0.75]      (0, 0) circle [x radius= 3.35, y radius= 3.35]   ;
\draw    (188.27,147.25) ;
\draw [shift={(188.27,147.25)}, rotate = 0] [color={rgb, 255:red, 0; green, 0; blue, 0 }  ][fill={rgb, 255:red, 0; green, 0; blue, 0 }  ][line width=0.75]      (0, 0) circle [x radius= 3.35, y radius= 3.35]   ;
\draw    (203.32,100.25) ;
\draw [shift={(203.32,100.25)}, rotate = 0] [color={rgb, 255:red, 0; green, 0; blue, 0 }  ][fill={rgb, 255:red, 0; green, 0; blue, 0 }  ][line width=0.75]      (0, 0) circle [x radius= 3.35, y radius= 3.35]   ;
\draw    (82.64,172.5) ;
\draw [shift={(82.64,172.5)}, rotate = 0] [color={rgb, 255:red, 0; green, 0; blue, 0 }  ][fill={rgb, 255:red, 0; green, 0; blue, 0 }  ][line width=0.75]      (0, 0) circle [x radius= 3.35, y radius= 3.35]   ;
\draw    (167.64,167) .. controls (168,154) and (174,151) .. (188.27,147.25) ;
\draw    (120.64,18.5) ;
\draw [shift={(120.64,18.5)}, rotate = 0] [color={rgb, 255:red, 0; green, 0; blue, 0 }  ][fill={rgb, 255:red, 0; green, 0; blue, 0 }  ][line width=0.75]      (0, 0) circle [x radius= 3.35, y radius= 3.35]   ;
\draw    (82.64,172.5) .. controls (122.64,142.5) and (141,148.5) .. (188.27,147.25) ;
\draw    (47,137.5) ;
\draw [shift={(47,137.5)}, rotate = 0] [color={rgb, 255:red, 0; green, 0; blue, 0 }  ][fill={rgb, 255:red, 0; green, 0; blue, 0 }  ][line width=0.75]      (0, 0) circle [x radius= 3.35, y radius= 3.35]   ;
\draw    (47,137.5) .. controls (98,164.5) and (191,127.5) .. (203.32,100.25) ;
\draw   (358.01,99.25) .. controls (358.01,81.19) and (373.06,66.55) .. (391.64,66.55) .. controls (410.21,66.55) and (425.27,81.19) .. (425.27,99.25) .. controls (425.27,117.31) and (410.21,131.95) .. (391.64,131.95) .. controls (373.06,131.95) and (358.01,117.31) .. (358.01,99.25)(308.96,99.25) .. controls (308.96,54.1) and (345.97,17.5) .. (391.64,17.5) .. controls (437.3,17.5) and (474.32,54.1) .. (474.32,99.25) .. controls (474.32,144.4) and (437.3,181) .. (391.64,181) .. controls (345.97,181) and (308.96,144.4) .. (308.96,99.25) ;
\draw    (391.64,181) ;
\draw [shift={(391.64,181)}, rotate = 0] [color={rgb, 255:red, 0; green, 0; blue, 0 }  ][fill={rgb, 255:red, 0; green, 0; blue, 0 }  ][line width=0.75]      (0, 0) circle [x radius= 3.35, y radius= 3.35]   ;
\draw    (459.27,146.25) ;
\draw [shift={(459.27,146.25)}, rotate = 0] [color={rgb, 255:red, 0; green, 0; blue, 0 }  ][fill={rgb, 255:red, 0; green, 0; blue, 0 }  ][line width=0.75]      (0, 0) circle [x radius= 3.35, y radius= 3.35]   ;
\draw    (474.32,99.25) ;
\draw [shift={(474.32,99.25)}, rotate = 0] [color={rgb, 255:red, 0; green, 0; blue, 0 }  ][fill={rgb, 255:red, 0; green, 0; blue, 0 }  ][line width=0.75]      (0, 0) circle [x radius= 3.35, y radius= 3.35]   ;
\draw    (353.64,171.5) ;
\draw [shift={(353.64,171.5)}, rotate = 0] [color={rgb, 255:red, 0; green, 0; blue, 0 }  ][fill={rgb, 255:red, 0; green, 0; blue, 0 }  ][line width=0.75]      (0, 0) circle [x radius= 3.35, y radius= 3.35]   ;
\draw    (353.64,171.5) .. controls (374,155.5) and (388,169.5) .. (391.64,181) ;
\draw    (391.64,17.5) ;
\draw [shift={(391.64,17.5)}, rotate = 0] [color={rgb, 255:red, 0; green, 0; blue, 0 }  ][fill={rgb, 255:red, 0; green, 0; blue, 0 }  ][line width=0.75]      (0, 0) circle [x radius= 3.35, y radius= 3.35]   ;
\draw    (318,136.5) .. controls (368,168.5) and (439,164.5) .. (459.27,146.25) ;
\draw    (318,136.5) ;
\draw [shift={(318,136.5)}, rotate = 0] [color={rgb, 255:red, 0; green, 0; blue, 0 }  ][fill={rgb, 255:red, 0; green, 0; blue, 0 }  ][line width=0.75]      (0, 0) circle [x radius= 3.35, y radius= 3.35]   ;
\draw    (318,136.5) .. controls (369,163.5) and (462,126.5) .. (474.32,99.25) ;
\draw    (120.64,67.55) ;
\draw [shift={(120.64,67.55)}, rotate = 0] [color={rgb, 255:red, 0; green, 0; blue, 0 }  ][fill={rgb, 255:red, 0; green, 0; blue, 0 }  ][line width=0.75]      (0, 0) circle [x radius= 3.35, y radius= 3.35]   ;
\draw    (391.64,66.55) ;
\draw [shift={(391.64,66.55)}, rotate = 0] [color={rgb, 255:red, 0; green, 0; blue, 0 }  ][fill={rgb, 255:red, 0; green, 0; blue, 0 }  ][line width=0.75]      (0, 0) circle [x radius= 3.35, y radius= 3.35]   ;

\draw (186.58,30.01) node [anchor=north west][inner sep=0.75pt]  [rotate=-54.26]  {$\dotsc $};
\draw (49.43,150.65) node [anchor=north west][inner sep=0.75pt]  [rotate=-42.02]  {$\dotsc $};
\draw (151,151.4) node [anchor=north west][inner sep=0.75pt]  [font=\scriptsize]  {$X_{i}$};
\draw (97,161.4) node [anchor=north west][inner sep=0.75pt]  [font=\scriptsize]  {$X_{i_{1}}$};
\draw (217.29,120.04) node [anchor=north west][inner sep=0.75pt]  [rotate=-108.26]  {$\dotsc $};
\draw (60,129.4) node [anchor=north west][inner sep=0.75pt]  [font=\scriptsize]  {$X_{i_{k}}$};
\draw (437.97,182.62) node [anchor=north west][inner sep=0.75pt]  [rotate=-155.85]  {$\dotsc $};
\draw (454.58,31.01) node [anchor=north west][inner sep=0.75pt]  [rotate=-54.26]  {$\dotsc $};
\draw (322.43,149.65) node [anchor=north west][inner sep=0.75pt]  [rotate=-42.02]  {$\dotsc $};
\draw (387,161.4) node [anchor=north west][inner sep=0.75pt]  [font=\scriptsize]  {$\varphi ( X_{i_{k}})$};
\draw (415,138.4) node [anchor=north west][inner sep=0.75pt]  [font=\scriptsize]  {$\varphi ( X_{i_{1}})$};
\draw (488.29,118.04) node [anchor=north west][inner sep=0.75pt]  [rotate=-108.26]  {$\dotsc $};
\draw (331,128.4) node [anchor=north west][inner sep=0.75pt]  [font=\scriptsize]  {$\varphi ( X_{i})$};
\draw (298.12,68.4) node [anchor=north west][inner sep=0.75pt]  [rotate=-298.49]  {$\dotsc $};
\draw (35.12,64.4) node [anchor=north west][inner sep=0.75pt]  [rotate=-298.49]  {$\dotsc $};
\draw (158.56,77.37) node [anchor=north west][inner sep=0.75pt]  [rotate=-73.38]  {$\dotsc $};
\draw (428.56,71.37) node [anchor=north west][inner sep=0.75pt]  [rotate=-73.38]  {$\dotsc $};

\end{tikzpicture}

\end{center}

    In order to be complete exceptional collections, there must be a path from $X_{i_k}$ to a non-regular module in $(Q^{\chi_1},R_1)$. Since $\varphi$ is an isomorphism, this implies that there is a path connecting a non-regular module and $\varphi(X_{i_k})$ in $(Q^{\chi_2},R_2)$. But this path is shorter in $(Q^{\chi_1},R_1)$ than it is in $(Q^{\chi_2},R_2)$, contradicting the fact that $\varphi$ is an isomorphism of Hom-Ext quivers.
\end{proof}
}

\begin{lem}\label{lem: regulars twist to eachother}
    Let $Q$ be a quiver of type $\tilde{\mathbb{A}}_{n-1}$ and $\chi$ and $\chi'$ be two exceptional collections. Let $\{X_{i_1}, X_{i_2}, \dots, X_{i_k}\}$ and $\{X_{j_1}, X_{j_2}, \dots, X_{j_l}\}$ be the left regular and right regular modules in $\chi$ respectively and suppose that $\varphi: (Q^{\chi},R) \rightarrow (Q^{
\chi'},R')$ is an isomorphism of Hom-Ext quivers. Then there exist $a$ and $b$ such that $$\{T_{\mathcal{L}}^a(\varphi
(X_{i_1})), T_{\mathcal{L}}^a(\varphi(X_{i_2})), \dots, T_{\mathcal{L}}^a(\varphi(X_{i_k}))\} = \{X_{i_1}, X_{i_2}, \dots, X_{i_k}\}$$ and $$\{T_{\mathcal{R}}^b(\varphi(X_{j_1})), T_{\mathcal{R}}^b(\varphi
(X_{j_2})), \dots, T_{\mathcal{R}}^b(\varphi(X_{j_l}))\} = \{X_{j_1}, X_{j_2}, \dots, X_{j_l}\}$$

\noindent
where $a,b \in \mathbb{Z}$ {\color{black} and $T^a_{\mathcal{L}}$ ($T^b_{\mathcal{R}}$) are $a$-fold ($b$-fold) elementary twists of the outer (inner) boundary component of $A_{Q}$.} 
\end{lem}

\begin{proof}
    We proceed by induction on the number of marked points on each boundary component. When there is one marked point on the boundary component, no quasi-simple module can occur in an exceptional collection, so there is nothing to show. In the case there are two marked points, the quasi-simples differ by an elementary twist, so the result holds. Suppose that if there are $k$ marked points on the boundary component, that the result holds and assume we are in the case that there are $k+1$ marked points on the boundary component. If the set of left-regular modules is empty, the statement is vacuously true. If the set of left-regular modules is nonempty, by completeness of the exceptional collections $\chi$ and $\chi'$, there must exist a quasi-simple $X_i\in\chi$ and $\varphi(X_i) \in \chi'$ by Lemma \ref{lem: iso sends simple to simple}. As in the proof of Lemma \ref{lem: Hom-Ext quiver is uniquely determined by one non-regular}, we can homotope along these quasi-simple arcs in each of the exceptional arc diagrams. Since the resulting Hom-Ext quivers are attained by removing this quasi simple from the vertex sets, along with any monomial relation passing through through it from the set of relations, we have that the isomorphism $\varphi$ descends and the result follows from induction. 
\end{proof}

\begin{thm}\label{thm: iso H-E quiver iff Dehn twist}
Let $Q$ be a quiver of type $\tilde{\mathbb{A}}_{n-1}$ and $\chi$ and $\chi'$ be two exceptional collections. Then $(Q^{\chi},R) \cong (Q^{\chi'},R')$ if and only if $D_{\chi} = T_{\mathcal{L}}^a T_{\mathcal{R}}^b(D_{\chi'})$, where $a,b \in \mathbb{Z}$ {\color{black} and $T^a_{\mathcal{L}}$ ($T^b_{\mathcal{R}}$) are $a$-fold ($b$-fold) elementary twists of the outer (inner) boundary component of $A_{Q}$.}
\end{thm}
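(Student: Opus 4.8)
The plan is to prove the two implications separately: the backward direction will follow from topological invariance of the tiling algebra, and the forward direction will be reduced to the uniqueness result of Lemma \ref{lem: Hom-Ext quiver is uniquely determined by one non-regular}.

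For the backward implication, I would first observe that each elementary twist $T_{\mathcal{L}}$ and $T_{\mathcal{R}}$ is realized by an orientation-preserving self-homeomorphism of $A_Q$ that fixes every marked point, since the twist is supported in an annular neighborhood of a boundary component pushed slightly into the interior. Consequently $T_{\mathcal{L}}^a T_{\mathcal{R}}^b$ sends $D_{\chi'}$ to an exceptional arc diagram having exactly the same shared endpoints and the same clockwise orderings within every complete fan, with only the winding numbers changing. Because the tiling algebra $T_\chi = \Bbbk Q_\chi/R_\chi$ depends only on this endpoint-and-fan data, we obtain $T_\chi \cong T_{\chi'}$, and Proposition \ref{prop: H-E quiver is tiling algebra} then yields $\Bbbk Q^\chi/R \cong T_\chi \cong T_{\chi'} \cong \Bbbk Q^{\chi'}/R'$.

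For the forward implication, I would fix an isomorphism $\phi\colon (Q^\chi,R)\to(Q^{\chi'},R')$ and transport the vertex labeling of $Q^{\chi'}$ to $Q^\chi$ along $\phi$. By Remark \ref{rem: every Hom-Ext quiver has a cycle} the unique band of each Hom-Ext quiver is the extended heart, so $\phi$ matches hearts; by Lemma \ref{lem: affine A subquiver with non-regular source} there is a distinguished vertex $i$ whose associated module is non-regular in any realization. Write $X_i\in\chi$ and $X_i'\in\chi'$ for the two non-regular bridging arcs sitting at vertex $i$. The core step is to produce integers $a,b$ with $T_{\mathcal{L}}^a T_{\mathcal{R}}^b(\gamma_{X_i'}) = \gamma_{X_i}$ as arcs. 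Granting this, set $D'' := T_{\mathcal{L}}^a T_{\mathcal{R}}^b(D_{\chi'})$; by the backward direction $D''$ has Hom-Ext quiver isomorphic to that of $D_{\chi'}$, hence to $(Q^\chi,R)$, and by construction its module at vertex $i$ is precisely $X_i$. Lemma \ref{lem: Hom-Ext quiver is uniquely determined by one non-regular} then forces $D'' = D_\chi$, that is $D_\chi = T_{\mathcal{L}}^a T_{\mathcal{R}}^b(D_{\chi'})$, which is the claim.

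The main obstacle is exactly this matching step: showing that the two non-regular arcs $\gamma_{X_i}$ and $\gamma_{X_i'}$ are related by a single element of the twist group $\langle T_{\mathcal{L}}, T_{\mathcal{R}}\rangle \cong \mathbb{Z}^2$. Here I would argue that, because $\phi$ preserves the local fan combinatorics at $i$ (Lemma \ref{lem: arrow and immediately clockwise}), the two arcs are bridging arcs of the same type, attached to the inner, respectively outer, boundary in the same manner, so they can differ only in the placement of their endpoints and in their winding number. A direct analysis of the action of $T_{\mathcal{R}}$ (resp. $T_{\mathcal{L}}$) on a bridging arc — which increments the winding number about the inner (resp. outer) boundary while cyclically advancing the relevant endpoint — shows that the twist group acts transitively on the bridging arcs of a fixed combinatorial type, so one may choose $a,b$ cancelling the two winding discrepancies to achieve the match. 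Care is needed to verify that $X_i$ and $X_i'$ are genuinely of the same non-regular type (preprojective versus preinjective), which follows from $\phi$ respecting the incoming/outgoing arrow pattern at $i$ together with the duality of Remark \ref{rem: Duality on Annulus}.
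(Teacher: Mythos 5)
Your proof follows the paper's argument essentially verbatim: the backward direction via invariance of the tiling algebra under twists together with Proposition \ref{prop: H-E quiver is tiling algebra}, and the forward direction by matching a non-regular source of the extended heart (Lemma \ref{lem: affine A subquiver with non-regular source}) under a suitable power $T_{\mathcal{L}}^a T_{\mathcal{R}}^b$ and then invoking the uniqueness statement of Lemma \ref{lem: Hom-Ext quiver is uniquely determined by one non-regular}. The only difference is that you spell out the transitivity of $\langle T_{\mathcal{L}}, T_{\mathcal{R}}\rangle$ on bridging arcs of a fixed type and flag the preprojective-versus-preinjective matching, both of which the paper leaves implicit in the phrase ``since they are both non-regular.''
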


\begin{proof}
If $D_{\chi} = T_{\mathcal{L}}^a T_{\mathcal{R}}^b(D_{\chi'})$, then we have an isomorphism of tiling algebras $T_{\chi} \cong T_{\chi'}$. By Proposition \ref{prop: H-E quiver is tiling algebra}, we have that $(Q^{\chi},R) \cong (Q^{\chi'},R')$.

Suppose that $\varphi: (Q^{\chi},R) \rightarrow (Q^{
\chi'},R')$ is an isomorphism of Hom-Ext quivers. If there are no regular modules in $\chi$, since $\varphi$ is an isomorphism, there are no regular modules in $\chi'$. Fix a bridging arc $X$ in $\chi$. Then there exist $a,b \in \mathbb{Z}$ such that $T^a_{\mathscr{L}}T^b_{\mathscr{R}}(\varphi(X)) = X$. By Lemma \ref{lem: Hom-Ext quiver is uniquely determined by one non-regular}, the remainder of the diagram is uniquely determined and $D_{\chi} = T^a_{\mathscr{L}}T^b_{\mathscr{R}}(D_{\chi'})$. Now suppose there are regulars. In the exceptional collection $\chi$, there exists a non-regular $X_l$ that intersects a regular $X_{i_l}$ at a vertex $i$ on the outer boundary component and a regular $X_{j_l}$ at a vertex $j$ on the inner boundary component. Since $\varphi$ is an isomorphism, $\varphi(X_l)$ also intersects $\varphi(X_{i_l})$ and $\varphi(X_{j_l})$ on their respective boundary components, so  by Lemma \ref{lem: regulars twist to eachother}, there exist integers $a$ and $b$ such that  $T_{\mathcal{L}}^a T_{\mathcal{R}}^b(\varphi(X_l))$ has the same endpoints as $X_l$. So, after some number $y$ of $2\pi$ Dehn twists, $T_{\mathcal{L}}^{a+y} T_{\mathcal{R}}^b(\varphi(X_l)) = X_l$. By Lemma \ref{lem: Hom-Ext quiver is uniquely determined by one non-regular}, the other modules in $T_{\mathcal{L}}^{a+y} T_{\mathcal{R}}^b(Q^{\chi'})$ are uniquely determined and $D_{\chi} = T_{\mathcal{L}}^{a+y} T_{\mathcal{R}}^b(D_{\chi'})$.
\end{proof}

\begin{rem}
    The reference \cite{gross2001topological} contains similar results using the language of graph embeddings and rotation systems. 
\end{rem}

\begin{exmp}
{\color{black} We do in fact need isomorphism of Hom-Ext quivers for Theorem \ref{thm: iso H-E quiver iff Dehn twist} to hold. Consider the following quiver.
\vspace{.5cm}
\[
\xymatrix{
1 \ar[r] \ar@/^2pc/[rrr] & 2 \ar[r] & 3  & 4 \ar[l]
	}
\]
Consider the following two exceptional sets along with the underlying quiver of their corresponding Hom-Ext quivers.}

\begin{center}

\tikzset{every picture/.style={line width=0.75pt}} 

\begin{tikzpicture}[x=0.75pt,y=0.75pt,yscale=-1,xscale=1]

\draw   (121.01,107.25) .. controls (121.01,89.19) and (136.06,74.55) .. (154.64,74.55) .. controls (173.21,74.55) and (188.27,89.19) .. (188.27,107.25) .. controls (188.27,125.31) and (173.21,139.95) .. (154.64,139.95) .. controls (136.06,139.95) and (121.01,125.31) .. (121.01,107.25)(71.96,107.25) .. controls (71.96,62.1) and (108.97,25.5) .. (154.64,25.5) .. controls (200.3,25.5) and (237.32,62.1) .. (237.32,107.25) .. controls (237.32,152.4) and (200.3,189) .. (154.64,189) .. controls (108.97,189) and (71.96,152.4) .. (71.96,107.25) ;

\draw    (121.01,107.25) .. controls (101,105) and (85,138) .. (95.96,165.25) ;
\draw [color={rgb, 255:red, 208; green, 2; blue, 27 }  ,draw opacity=1 ]   (188.27,107.25) .. controls (206,102) and (230,128) .. (219.32,157.25) ;
\draw [color={rgb, 255:red, 245; green, 166; blue, 35 }  ,draw opacity=1 ]   (121.01,107.25) .. controls (94,138) and (161,195) .. (219.32,157.25) ;
\draw [color={rgb, 255:red, 22; green, 48; blue, 226 }  ,draw opacity=1 ]   (95.96,165.25) .. controls (13,37) and (271,-14) .. (219.32,157.25) ;
\draw   (435.01,106.25) .. controls (435.01,88.19) and (450.06,73.55) .. (468.64,73.55) .. controls (487.21,73.55) and (502.27,88.19) .. (502.27,106.25) .. controls (502.27,124.31) and (487.21,138.95) .. (468.64,138.95) .. controls (450.06,138.95) and (435.01,124.31) .. (435.01,106.25)(385.96,106.25) .. controls (385.96,61.1) and (422.97,24.5) .. (468.64,24.5) .. controls (514.3,24.5) and (551.32,61.1) .. (551.32,106.25) .. controls (551.32,151.4) and (514.3,188) .. (468.64,188) .. controls (422.97,188) and (385.96,151.4) .. (385.96,106.25) ;

\draw    (435.01,106.25) .. controls (415,104) and (399,137) .. (409.96,164.25) ;
\draw [color={rgb, 255:red, 208; green, 2; blue, 27 }  ,draw opacity=1 ]   (502.27,106.25) .. controls (520,101) and (544,127) .. (533.32,156.25) ;
\draw [color={rgb, 255:red, 245; green, 166; blue, 35 }  ,draw opacity=1 ]   (502.27,106.25) .. controls (508,-5) and (349,77) .. (409.96,164.25) ;
\draw [color={rgb, 255:red, 22; green, 48; blue, 226 }  ,draw opacity=1 ]   (435.01,106.25) .. controls (406,166) and (528,171) .. (502.27,106.25) ;
\draw    (319,218) ;
\draw [shift={(319,218)}, rotate = 0] [color={rgb, 255:red, 0; green, 0; blue, 0 }  ][fill={rgb, 255:red, 0; green, 0; blue, 0 }  ][line width=0.75]      (0, 0) circle [x radius= 3.35, y radius= 3.35]   ;
\draw    (312,222) -- (275.69,244.93) ;
\draw [shift={(274,246)}, rotate = 327.72] [color={rgb, 255:red, 0; green, 0; blue, 0 }  ][line width=0.75]    (10.93,-3.29) .. controls (6.95,-1.4) and (3.31,-0.3) .. (0,0) .. controls (3.31,0.3) and (6.95,1.4) .. (10.93,3.29)   ;
\draw [color={rgb, 255:red, 22; green, 48; blue, 226 }  ,draw opacity=1 ]   (269,251) ;
\draw [shift={(269,251)}, rotate = 0] [color={rgb, 255:red, 22; green, 48; blue, 226 }  ,draw opacity=1 ][fill={rgb, 255:red, 22; green, 48; blue, 226 }  ,fill opacity=1 ][line width=0.75]      (0, 0) circle [x radius= 3.35, y radius= 3.35]   ;
\draw    (279,250) -- (312,250) ;
\draw [shift={(314,250)}, rotate = 180] [color={rgb, 255:red, 0; green, 0; blue, 0 }  ][line width=0.75]    (10.93,-3.29) .. controls (6.95,-1.4) and (3.31,-0.3) .. (0,0) .. controls (3.31,0.3) and (6.95,1.4) .. (10.93,3.29)   ;
\draw [color={rgb, 255:red, 208; green, 2; blue, 27 }  ,draw opacity=1 ]   (321,251) ;
\draw [shift={(321,251)}, rotate = 0] [color={rgb, 255:red, 208; green, 2; blue, 27 }  ,draw opacity=1 ][fill={rgb, 255:red, 208; green, 2; blue, 27 }  ,fill opacity=1 ][line width=0.75]      (0, 0) circle [x radius= 3.35, y radius= 3.35]   ;
\draw    (331,251) -- (364,251) ;
\draw [shift={(366,251)}, rotate = 180] [color={rgb, 255:red, 0; green, 0; blue, 0 }  ][line width=0.75]    (10.93,-3.29) .. controls (6.95,-1.4) and (3.31,-0.3) .. (0,0) .. controls (3.31,0.3) and (6.95,1.4) .. (10.93,3.29)   ;
\draw [color={rgb, 255:red, 245; green, 166; blue, 35 }  ,draw opacity=1 ]   (372,251) ;
\draw [shift={(372,251)}, rotate = 0] [color={rgb, 255:red, 245; green, 166; blue, 35 }  ,draw opacity=1 ][fill={rgb, 255:red, 245; green, 166; blue, 35 }  ,fill opacity=1 ][line width=0.75]      (0, 0) circle [x radius= 3.35, y radius= 3.35]   ;
\draw    (325,221) -- (365.26,244.01) ;
\draw [shift={(367,245)}, rotate = 209.74] [color={rgb, 255:red, 0; green, 0; blue, 0 }  ][line width=0.75]    (10.93,-3.29) .. controls (6.95,-1.4) and (3.31,-0.3) .. (0,0) .. controls (3.31,0.3) and (6.95,1.4) .. (10.93,3.29)   ;
\draw  [dash pattern={on 0.84pt off 2.51pt}]  (296.5,250) .. controls (251,304) and (226,209) .. (293,234) ;

\draw (221.32,160.65) node [anchor=north west][inner sep=0.75pt]    {$1$};
\draw (81.62,167.15) node [anchor=north west][inner sep=0.75pt]    {$2$};
\draw (170.66,98.45) node [anchor=north west][inner sep=0.75pt]    {$4$};
\draw (127.48,99.9) node [anchor=north west][inner sep=0.75pt]    {$3$};
\draw (535.32,159.65) node [anchor=north west][inner sep=0.75pt]    {$1$};
\draw (395.62,166.15) node [anchor=north west][inner sep=0.75pt]    {$2$};
\draw (484.66,97.45) node [anchor=north west][inner sep=0.75pt]    {$4$};
\draw (441.48,98.9) node [anchor=north west][inner sep=0.75pt]    {$3$};

\draw    (121.01,107.25) ;
\draw [shift={(121.01,107.25)}, rotate = 0] [color={rgb, 255:red, 0; green, 0; blue, 0 }  ][fill={rgb, 255:red, 0; green, 0; blue, 0 }  ][line width=0.75]      (0, 0) circle [x radius= 3.35, y radius= 3.35]   ;
\draw    (219.32,157.25) ;
\draw [shift={(219.32,157.25)}, rotate = 0] [color={rgb, 255:red, 0; green, 0; blue, 0 }  ][fill={rgb, 255:red, 0; green, 0; blue, 0 }  ][line width=0.75]      (0, 0) circle [x radius= 3.35, y radius= 3.35]   ;
\draw    (188.27,107.25) ;
\draw [shift={(188.27,107.25)}, rotate = 0] [color={rgb, 255:red, 0; green, 0; blue, 0 }  ][fill={rgb, 255:red, 0; green, 0; blue, 0 }  ][line width=0.75]      (0, 0) circle [x radius= 3.35, y radius= 3.35]   ;
\draw    (95.96,165.25) ;
\draw [shift={(95.96,165.25)}, rotate = 0] [color={rgb, 255:red, 0; green, 0; blue, 0 }  ][fill={rgb, 255:red, 0; green, 0; blue, 0 }  ][line width=0.75]      (0, 0) circle [x radius= 3.35, y radius= 3.35]   ;

\draw    (435.01,106.25) ;
\draw [shift={(435.01,106.25)}, rotate = 0] [color={rgb, 255:red, 0; green, 0; blue, 0 }  ][fill={rgb, 255:red, 0; green, 0; blue, 0 }  ][line width=0.75]      (0, 0) circle [x radius= 3.35, y radius= 3.35]   ;
\draw    (533.32,156.25) ;
\draw [shift={(533.32,156.25)}, rotate = 0] [color={rgb, 255:red, 0; green, 0; blue, 0 }  ][fill={rgb, 255:red, 0; green, 0; blue, 0 }  ][line width=0.75]      (0, 0) circle [x radius= 3.35, y radius= 3.35]   ;
\draw    (502.27,106.25) ;
\draw [shift={(502.27,106.25)}, rotate = 0] [color={rgb, 255:red, 0; green, 0; blue, 0 }  ][fill={rgb, 255:red, 0; green, 0; blue, 0 }  ][line width=0.75]      (0, 0) circle [x radius= 3.35, y radius= 3.35]   ;
\draw    (409.96,164.25) ;
\draw [shift={(409.96,164.25)}, rotate = 0] [color={rgb, 255:red, 0; green, 0; blue, 0 }  ][fill={rgb, 255:red, 0; green, 0; blue, 0 }  ][line width=0.75]      (0, 0) circle [x radius= 3.35, y radius= 3.35]   ;

\end{tikzpicture}

\end{center}

{\color{black} Although the underlying quivers of their Hom-Ext quivers are isomorphic as quivers with relations, their Hom-Ext quivers are not isomorphic because the module corresponding to the blue arc is left regular in one exceptional collection and right regular in the other, and clearly one diagram can not be twisted to the other.}
\end{exmp}

\begin{exmp}\label{exmp: classification of H-E quivers}
   {\color{black} Consider the quiver 

    \[
\xymatrix{
1 \ar[r] \ar@/^2pc/[rr] & 2 \ar[r] & 3
	}
\]

\noindent
Since there are 8 exceptional arc diagrams up to $2\pi$ twist of the inner boundary component (Theorem 6.4 in \cite{igusa2024clusters}), there are 4 exceptional arc diagrams up to any twist of any boundary component. From the previous theorem, there are 4 {\color{black} non-isomorphic} Hom-Ext quivers: }

\begin{center}

\tikzset{every picture/.style={line width=0.75pt}} 

\begin{tikzpicture}[x=0.75pt,y=0.75pt,yscale=-1,xscale=1]

\draw   (94.41,64.92) .. controls (94.41,55.55) and (104.67,47.95) .. (117.34,47.95) .. controls (130,47.95) and (140.27,55.55) .. (140.27,64.92) .. controls (140.27,74.29) and (130,81.88) .. (117.34,81.88) .. controls (104.67,81.88) and (94.41,74.29) .. (94.41,64.92)(68.96,64.92) .. controls (68.96,41.49) and (90.62,22.5) .. (117.34,22.5) .. controls (144.06,22.5) and (165.72,41.49) .. (165.72,64.92) .. controls (165.72,88.34) and (144.06,107.33) .. (117.34,107.33) .. controls (90.62,107.33) and (68.96,88.34) .. (68.96,64.92) ;

\draw    (117.34,47.95) .. controls (140.99,34.62) and (151.17,44.82) .. (165.72,64.92) ;
\draw [color={rgb, 255:red, 208; green, 2; blue, 27 }  ,draw opacity=1 ]   (117.34,47.95) .. controls (66.81,27.6) and (79.17,128.38) .. (165.72,64.92) ;
\draw [color={rgb, 255:red, 22; green, 48; blue, 226 }  ,draw opacity=1 ]   (117.34,107.33) .. controls (56.62,77.99) and (78.44,14.21) .. (117.34,47.95) ;
\draw    (244.27,65.55) ;
\draw [shift={(244.27,65.55)}, rotate = 0] [color={rgb, 255:red, 0; green, 0; blue, 0 }  ][fill={rgb, 255:red, 0; green, 0; blue, 0 }  ][line width=0.75]      (0, 0) circle [x radius= 3.35, y radius= 3.35]   ;
\draw [color={rgb, 255:red, 22; green, 48; blue, 226 }  ,draw opacity=1 ]   (287.91,65.87) ;
\draw [shift={(287.91,65.87)}, rotate = 0] [color={rgb, 255:red, 22; green, 48; blue, 226 }  ,draw opacity=1 ][fill={rgb, 255:red, 22; green, 48; blue, 226 }  ,fill opacity=1 ][line width=0.75]      (0, 0) circle [x radius= 3.35, y radius= 3.35]   ;
\draw [color={rgb, 255:red, 208; green, 2; blue, 27 }  ,draw opacity=1 ]   (332.27,65.87) ;
\draw [shift={(332.27,65.87)}, rotate = 0] [color={rgb, 255:red, 208; green, 2; blue, 27 }  ,draw opacity=1 ][fill={rgb, 255:red, 208; green, 2; blue, 27 }  ,fill opacity=1 ][line width=0.75]      (0, 0) circle [x radius= 3.35, y radius= 3.35]   ;
\draw    (248.63,65.55) -- (280.09,65.85) ;
\draw [shift={(282.09,65.87)}, rotate = 180.55] [color={rgb, 255:red, 0; green, 0; blue, 0 }  ][line width=0.75]    (10.93,-3.29) .. controls (6.95,-1.4) and (3.31,-0.3) .. (0,0) .. controls (3.31,0.3) and (6.95,1.4) .. (10.93,3.29)   ;
\draw    (295.18,65.87) -- (325.91,65.87) ;
\draw [shift={(327.91,65.87)}, rotate = 180] [color={rgb, 255:red, 0; green, 0; blue, 0 }  ][line width=0.75]    (10.93,-3.29) .. controls (6.95,-1.4) and (3.31,-0.3) .. (0,0) .. controls (3.31,0.3) and (6.95,1.4) .. (10.93,3.29)   ;
\draw    (249.36,60.13) .. controls (277.72,41.48) and (304.71,46.77) .. (325.59,59.76) ;
\draw [shift={(327.18,60.77)}, rotate = 213.03] [color={rgb, 255:red, 0; green, 0; blue, 0 }  ][line width=0.75]    (10.93,-3.29) .. controls (6.95,-1.4) and (3.31,-0.3) .. (0,0) .. controls (3.31,0.3) and (6.95,1.4) .. (10.93,3.29)   ;
\draw   (95.13,167.61) .. controls (95.13,158.24) and (105.4,150.64) .. (118.07,150.64) .. controls (130.73,150.64) and (141,158.24) .. (141,167.61) .. controls (141,176.98) and (130.73,184.58) .. (118.07,184.58) .. controls (105.4,184.58) and (95.13,176.98) .. (95.13,167.61)(69.68,167.61) .. controls (69.68,144.18) and (91.35,125.19) .. (118.07,125.19) .. controls (144.79,125.19) and (166.45,144.18) .. (166.45,167.61) .. controls (166.45,191.04) and (144.79,210.03) .. (118.07,210.03) .. controls (91.35,210.03) and (69.68,191.04) .. (69.68,167.61) ;

\draw    (118.07,150.64) .. controls (141.72,137.31) and (151.9,147.52) .. (166.45,167.61) ;
\draw [color={rgb, 255:red, 208; green, 2; blue, 27 }  ,draw opacity=1 ]   (118.07,210.03) .. controls (130.08,206.84) and (159.9,187.06) .. (166.45,167.61) ;
\draw [color={rgb, 255:red, 22; green, 48; blue, 226 }  ,draw opacity=1 ]   (118.07,210.03) .. controls (57.35,180.69) and (79.17,116.9) .. (118.07,150.64) ;
\draw [color={rgb, 255:red, 0; green, 0; blue, 0 }  ,draw opacity=1 ]   (245,168.25) ;
\draw [shift={(245,168.25)}, rotate = 0] [color={rgb, 255:red, 0; green, 0; blue, 0 }  ,draw opacity=1 ][fill={rgb, 255:red, 0; green, 0; blue, 0 }  ,fill opacity=1 ][line width=0.75]      (0, 0) circle [x radius= 3.35, y radius= 3.35]   ;
\draw [color={rgb, 255:red, 208; green, 2; blue, 27 }  ,draw opacity=1 ]   (288.63,168.57) ;
\draw [shift={(288.63,168.57)}, rotate = 0] [color={rgb, 255:red, 208; green, 2; blue, 27 }  ,draw opacity=1 ][fill={rgb, 255:red, 208; green, 2; blue, 27 }  ,fill opacity=1 ][line width=0.75]      (0, 0) circle [x radius= 3.35, y radius= 3.35]   ;
\draw [color={rgb, 255:red, 22; green, 48; blue, 226 }  ,draw opacity=1 ]   (333,168.57) ;
\draw [shift={(333,168.57)}, rotate = 0] [color={rgb, 255:red, 22; green, 48; blue, 226 }  ,draw opacity=1 ][fill={rgb, 255:red, 22; green, 48; blue, 226 }  ,fill opacity=1 ][line width=0.75]      (0, 0) circle [x radius= 3.35, y radius= 3.35]   ;
\draw    (249.36,168.25) -- (280.82,168.55) ;
\draw [shift={(282.82,168.57)}, rotate = 180.55] [color={rgb, 255:red, 0; green, 0; blue, 0 }  ][line width=0.75]    (10.93,-3.29) .. controls (6.95,-1.4) and (3.31,-0.3) .. (0,0) .. controls (3.31,0.3) and (6.95,1.4) .. (10.93,3.29)   ;
\draw    (295.91,168.57) -- (326.64,168.57) ;
\draw [shift={(328.64,168.57)}, rotate = 180] [color={rgb, 255:red, 0; green, 0; blue, 0 }  ][line width=0.75]    (10.93,-3.29) .. controls (6.95,-1.4) and (3.31,-0.3) .. (0,0) .. controls (3.31,0.3) and (6.95,1.4) .. (10.93,3.29)   ;
\draw    (250.09,162.83) .. controls (278.45,144.17) and (305.43,149.46) .. (326.31,162.45) ;
\draw [shift={(327.91,163.46)}, rotate = 213.03] [color={rgb, 255:red, 0; green, 0; blue, 0 }  ][line width=0.75]    (10.93,-3.29) .. controls (6.95,-1.4) and (3.31,-0.3) .. (0,0) .. controls (3.31,0.3) and (6.95,1.4) .. (10.93,3.29)   ;
\draw   (401.13,67.3) .. controls (401.13,57.93) and (411.4,50.34) .. (424.07,50.34) .. controls (436.73,50.34) and (447,57.93) .. (447,67.3) .. controls (447,76.67) and (436.73,84.27) .. (424.07,84.27) .. controls (411.4,84.27) and (401.13,76.67) .. (401.13,67.3)(375.68,67.3) .. controls (375.68,43.88) and (397.35,24.89) .. (424.07,24.89) .. controls (450.79,24.89) and (472.45,43.88) .. (472.45,67.3) .. controls (472.45,90.73) and (450.79,109.72) .. (424.07,109.72) .. controls (397.35,109.72) and (375.68,90.73) .. (375.68,67.3) ;

\draw    (424.07,50.34) .. controls (447.72,37.01) and (470.99,52.31) .. (424.07,109.72) ;
\draw [color={rgb, 255:red, 208; green, 2; blue, 27 }  ,draw opacity=1 ]   (472.45,67.3) .. controls (443.35,-9.56) and (317.53,65.07) .. (424.07,109.72) ;
\draw [color={rgb, 255:red, 22; green, 48; blue, 226 }  ,draw opacity=1 ]   (424.07,109.72) .. controls (369,74.5) and (406.26,28.08) .. (424.07,50.34) ;
\draw    (551,67.94) ;
\draw [shift={(551,67.94)}, rotate = 0] [color={rgb, 255:red, 0; green, 0; blue, 0 }  ][fill={rgb, 255:red, 0; green, 0; blue, 0 }  ][line width=0.75]      (0, 0) circle [x radius= 3.35, y radius= 3.35]   ;
\draw [color={rgb, 255:red, 22; green, 48; blue, 226 }  ,draw opacity=1 ]   (594.63,68.26) ;
\draw [shift={(594.63,68.26)}, rotate = 0] [color={rgb, 255:red, 22; green, 48; blue, 226 }  ,draw opacity=1 ][fill={rgb, 255:red, 22; green, 48; blue, 226 }  ,fill opacity=1 ][line width=0.75]      (0, 0) circle [x radius= 3.35, y radius= 3.35]   ;
\draw [color={rgb, 255:red, 208; green, 2; blue, 27 }  ,draw opacity=1 ]   (639,68.26) ;
\draw [shift={(639,68.26)}, rotate = 0] [color={rgb, 255:red, 208; green, 2; blue, 27 }  ,draw opacity=1 ][fill={rgb, 255:red, 208; green, 2; blue, 27 }  ,fill opacity=1 ][line width=0.75]      (0, 0) circle [x radius= 3.35, y radius= 3.35]   ;
\draw    (556.09,70.49) -- (587.54,70.79) ;
\draw [shift={(589.54,70.81)}, rotate = 180.55] [color={rgb, 255:red, 0; green, 0; blue, 0 }  ][line width=0.75]    (10.93,-3.29) .. controls (6.95,-1.4) and (3.31,-0.3) .. (0,0) .. controls (3.31,0.3) and (6.95,1.4) .. (10.93,3.29)   ;
\draw    (601.91,68.26) -- (632.64,68.26) ;
\draw [shift={(634.64,68.26)}, rotate = 180] [color={rgb, 255:red, 0; green, 0; blue, 0 }  ][line width=0.75]    (10.93,-3.29) .. controls (6.95,-1.4) and (3.31,-0.3) .. (0,0) .. controls (3.31,0.3) and (6.95,1.4) .. (10.93,3.29)   ;
\draw   (401.13,168.08) .. controls (401.13,158.71) and (411.4,151.12) .. (424.07,151.12) .. controls (436.73,151.12) and (447,158.71) .. (447,168.08) .. controls (447,177.45) and (436.73,185.05) .. (424.07,185.05) .. controls (411.4,185.05) and (401.13,177.45) .. (401.13,168.08)(375.68,168.08) .. controls (375.68,144.66) and (397.35,125.67) .. (424.07,125.67) .. controls (450.79,125.67) and (472.45,144.66) .. (472.45,168.08) .. controls (472.45,191.51) and (450.79,210.5) .. (424.07,210.5) .. controls (397.35,210.5) and (375.68,191.51) .. (375.68,168.08) ;

\draw [color={rgb, 255:red, 22; green, 48; blue, 226 }  ,draw opacity=1 ]   (424.07,151.12) .. controls (447.72,137.79) and (457.9,147.99) .. (472.45,168.08) ;
\draw [color={rgb, 255:red, 208; green, 2; blue, 27 }  ,draw opacity=1 ]   (424.07,151.12) .. controls (373.53,130.77) and (385.9,231.55) .. (472.45,168.08) ;
\draw    (551,168.72) ;
\draw [shift={(551,168.72)}, rotate = 0] [color={rgb, 255:red, 0; green, 0; blue, 0 }  ][fill={rgb, 255:red, 0; green, 0; blue, 0 }  ][line width=0.75]      (0, 0) circle [x radius= 3.35, y radius= 3.35]   ;
\draw [color={rgb, 255:red, 22; green, 48; blue, 226 }  ,draw opacity=1 ]   (594.63,169.04) ;
\draw [shift={(594.63,169.04)}, rotate = 0] [color={rgb, 255:red, 22; green, 48; blue, 226 }  ,draw opacity=1 ][fill={rgb, 255:red, 22; green, 48; blue, 226 }  ,fill opacity=1 ][line width=0.75]      (0, 0) circle [x radius= 3.35, y radius= 3.35]   ;
\draw [color={rgb, 255:red, 208; green, 2; blue, 27 }  ,draw opacity=1 ]   (639,169.04) ;
\draw [shift={(639,169.04)}, rotate = 0] [color={rgb, 255:red, 208; green, 2; blue, 27 }  ,draw opacity=1 ][fill={rgb, 255:red, 208; green, 2; blue, 27 }  ,fill opacity=1 ][line width=0.75]      (0, 0) circle [x radius= 3.35, y radius= 3.35]   ;
\draw    (555.36,168.72) -- (586.82,169.02) ;
\draw [shift={(588.82,169.04)}, rotate = 180.55] [color={rgb, 255:red, 0; green, 0; blue, 0 }  ][line width=0.75]    (10.93,-3.29) .. controls (6.95,-1.4) and (3.31,-0.3) .. (0,0) .. controls (3.31,0.3) and (6.95,1.4) .. (10.93,3.29)   ;
\draw  [dash pattern={on 0.84pt off 2.51pt}]  (266.09,168.41) .. controls (281.36,182.6) and (302.45,178.13) .. (312.27,168.57) ;
\draw    (556.09,65.71) -- (586.82,65.71) ;
\draw [shift={(588.82,65.71)}, rotate = 180] [color={rgb, 255:red, 0; green, 0; blue, 0 }  ][line width=0.75]    (10.93,-3.29) .. controls (6.95,-1.4) and (3.31,-0.3) .. (0,0) .. controls (3.31,0.3) and (6.95,1.4) .. (10.93,3.29)   ;
\draw  [dash pattern={on 0.84pt off 2.51pt}]  (572.82,70.65) .. controls (588.09,84.84) and (608.45,77.83) .. (618.27,68.26) ;
\draw [color={rgb, 255:red, 0; green, 0; blue, 0 }  ,draw opacity=1 ]   (472.45,168.08) .. controls (443.35,91.22) and (317.53,165.85) .. (424.07,210.5) ;
\draw    (601.18,171.27) -- (632.64,171.57) ;
\draw [shift={(634.64,171.59)}, rotate = 180.55] [color={rgb, 255:red, 0; green, 0; blue, 0 }  ][line width=0.75]    (10.93,-3.29) .. controls (6.95,-1.4) and (3.31,-0.3) .. (0,0) .. controls (3.31,0.3) and (6.95,1.4) .. (10.93,3.29)   ;
\draw    (601.18,166.49) -- (631.91,166.49) ;
\draw [shift={(633.91,166.49)}, rotate = 180] [color={rgb, 255:red, 0; green, 0; blue, 0 }  ][line width=0.75]    (10.93,-3.29) .. controls (6.95,-1.4) and (3.31,-0.3) .. (0,0) .. controls (3.31,0.3) and (6.95,1.4) .. (10.93,3.29)   ;
\draw  [dash pattern={on 0.84pt off 2.51pt}]  (572.09,168.88) .. controls (587.36,183.07) and (608.09,181) .. (617.91,171.43) ;

\draw    (165.72,64.92) ;
\draw [shift={(165.72,64.92)}, rotate = 0] [color={rgb, 255:red, 0; green, 0; blue, 0 }  ][fill={rgb, 255:red, 0; green, 0; blue, 0 }  ][line width=0.75]      (0, 0) circle [x radius= 3.35, y radius= 3.35]   ;
\draw    (117.34,107.33) ;
\draw [shift={(117.34,107.33)}, rotate = 0] [color={rgb, 255:red, 0; green, 0; blue, 0 }  ][fill={rgb, 255:red, 0; green, 0; blue, 0 }  ][line width=0.75]      (0, 0) circle [x radius= 3.35, y radius= 3.35]   ;
\draw    (117.34,47.95) ;
\draw [shift={(117.34,47.95)}, rotate = 0] [color={rgb, 255:red, 0; green, 0; blue, 0 }  ][fill={rgb, 255:red, 0; green, 0; blue, 0 }  ][line width=0.75]      (0, 0) circle [x radius= 3.35, y radius= 3.35]   ;

\draw    (166.45,167.61) ;
\draw [shift={(166.45,167.61)}, rotate = 0] [color={rgb, 255:red, 0; green, 0; blue, 0 }  ][fill={rgb, 255:red, 0; green, 0; blue, 0 }  ][line width=0.75]      (0, 0) circle [x radius= 3.35, y radius= 3.35]   ;
\draw    (118.07,210.03) ;
\draw [shift={(118.07,210.03)}, rotate = 0] [color={rgb, 255:red, 0; green, 0; blue, 0 }  ][fill={rgb, 255:red, 0; green, 0; blue, 0 }  ][line width=0.75]      (0, 0) circle [x radius= 3.35, y radius= 3.35]   ;
\draw    (118.07,150.64) ;
\draw [shift={(118.07,150.64)}, rotate = 0] [color={rgb, 255:red, 0; green, 0; blue, 0 }  ][fill={rgb, 255:red, 0; green, 0; blue, 0 }  ][line width=0.75]      (0, 0) circle [x radius= 3.35, y radius= 3.35]   ;

\draw    (472.45,67.3) ;
\draw [shift={(472.45,67.3)}, rotate = 0] [color={rgb, 255:red, 0; green, 0; blue, 0 }  ][fill={rgb, 255:red, 0; green, 0; blue, 0 }  ][line width=0.75]      (0, 0) circle [x radius= 3.35, y radius= 3.35]   ;
\draw    (424.07,109.72) ;
\draw [shift={(424.07,109.72)}, rotate = 0] [color={rgb, 255:red, 0; green, 0; blue, 0 }  ][fill={rgb, 255:red, 0; green, 0; blue, 0 }  ][line width=0.75]      (0, 0) circle [x radius= 3.35, y radius= 3.35]   ;
\draw    (424.07,50.34) ;
\draw [shift={(424.07,50.34)}, rotate = 0] [color={rgb, 255:red, 0; green, 0; blue, 0 }  ][fill={rgb, 255:red, 0; green, 0; blue, 0 }  ][line width=0.75]      (0, 0) circle [x radius= 3.35, y radius= 3.35]   ;

\draw    (472.45,168.08) ;
\draw [shift={(472.45,168.08)}, rotate = 0] [color={rgb, 255:red, 0; green, 0; blue, 0 }  ][fill={rgb, 255:red, 0; green, 0; blue, 0 }  ][line width=0.75]      (0, 0) circle [x radius= 3.35, y radius= 3.35]   ;
\draw    (424.07,210.5) ;
\draw [shift={(424.07,210.5)}, rotate = 0] [color={rgb, 255:red, 0; green, 0; blue, 0 }  ][fill={rgb, 255:red, 0; green, 0; blue, 0 }  ][line width=0.75]      (0, 0) circle [x radius= 3.35, y radius= 3.35]   ;
\draw    (424.07,151.12) ;
\draw [shift={(424.07,151.12)}, rotate = 0] [color={rgb, 255:red, 0; green, 0; blue, 0 }  ][fill={rgb, 255:red, 0; green, 0; blue, 0 }  ][line width=0.75]      (0, 0) circle [x radius= 3.35, y radius= 3.35]   ;

\end{tikzpicture}

\end{center}
\end{exmp}

\section{Hom-Ext quiver and derived autoequivalences}\label{sec: H-E quiver and autoequivalences}

Our main goal for this section is to show that two exceptional collections in type $\tilde{\mathbb{A}}$ have {\color{black} isomorphic} Hom-Ext quivers if and only if they differ by an autoequivalence of the bounded derived category. Until stated otherwise, let $\mathscr{D}$ denote the bounded derived category of representations of a finite acyclic quiver.

\begin{defn}
\leavevmode
\begin{enumerate}
\item An object $E \in \mathscr{D}$ is called exceptional if Hom$^{\bullet}(E,E) = \Bbbk \cdot \mathbbm{1}_{E}$, where $$\text{Hom}^{\bullet}(X,Y) = \displaystyle\bigoplus_{i \in \mathbb{Z}} \Sigma^{-i}\,\text{Hom}(X,\Sigma^{i}\,Y).$$

\item (\cite{broomhead2017discrete}) A sequence of objects $E_* = (E_1, E_2, \dots, E_k)$ of $\mathscr{D}$ is called an \textbf{exceptional $k$-cycle} ($k \geq 2$) if 

\begin{enumerate}
\item $E_i$ is exceptional for all $i$.
\item There exist $k_i \in \mathbb{Z}$ such that $\mathbb{S}(E_i) \cong \Sigma^{k_i}(E_{i+1})$ for all $i$, where $\mathbb{S} := \Sigma \tau_{\mathscr{D}}$ is the Serre functor and we take the convention $E_{k+1} := E_1$.
\item Hom$^{\bullet}(E_i,E_j) = 0$ unless $j = i$ or $j = i+1$.
\end{enumerate}
\end{enumerate}
\end{defn}

\begin{rem}
    We also can define an \textbf{exceptional 1-cycle} as an object $E$ such that there is an integer $l$ so that $\mathbb{S}(E) \cong \Sigma^l E$ and Hom$^{\bullet}(E,E) = \Bbbk \oplus \Sigma^{-l}\Bbbk$.
\end{rem}

\begin{exmp}
Consider the quiver 
\[
\xymatrixcolsep{10pt}
\xymatrix{
1 \ar[r]_{\alpha_1} \ar@/^1pc/[rr]^{\alpha_3} & 2 \ar[r]_{\alpha_2} & 3 }\]

Then the tuple of string modules $(e_2,\alpha_3)$ forms an exceptional 2-cycle when these modules are viewed as stalk complexes centered at $0$ in $\mathscr{D}$. For any choice of $\lambda \in \Bbbk^{\times}$, the band module \[
\xymatrixcolsep{10pt}
\xymatrix{
\Bbbk \ar[r]_1 \ar@/^1pc/[rr]^{\lambda} & \Bbbk \ar[r]_1 & \Bbbk }\]

associated to the walk $\alpha_1\alpha_2\alpha_3^{-1}$ is an exceptional 1-cycle. When $\lambda = 0$, the above module is the string module associated to the walk $\alpha_1\alpha_2$, and is also an exceptional 1-cycle.
\end{exmp}

It turns out that in type $\tilde{\mathbb{A}}$, all exceptional $k$-cycles are completely classified.

\begin{thm}[\cite {guo2020exceptional} Theorem 1.3]

Let $Q$ be a Euclidean quiver. {\color{black} Up to shift, the quasi-simples at the mouths of homogeneous tubes give all the exceptional 1-cycles.} For each non-homogeneous tube $T$ of $\Bbbk Q$ with rank $r \geq 2$, we have that $(E, \tau E, \cdots, \tau^{r-1} E)$ is an exceptional cycle in $\mathscr{D}$, where $E$ is a quasi-simple in $T$. When $T$ runs over all non-homogeneous tubes, this gives all the exceptional cycles of length at least 2 in $\mathscr{D}$ up to shift {\color{black} of each position} and rotation. $\hfill \qed$
\end{thm}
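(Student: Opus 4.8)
The plan is to classify exceptional cycles by reducing every object to a module and then running the analysis inside a single tube, using Serre duality throughout. Since $\Bbbk Q$ is hereditary, every indecomposable of $\mathscr D$ is a shift of a module, and by the Happel--Ringel lemma an object is exceptional exactly when it is the shift of a brick $M$ with $\mathrm{Ext}^1(M,M)=0$, i.e.\ of an exceptional module. As the defining condition (b) of an exceptional cycle already absorbs a shift $\Sigma^{k_i}$ at each position, I may shift each $E_i$ and assume from the start that every $E_i$ is an indecomposable module. The two facts I will lean on are $\mathbb S=\Sigma\tau_{\mathscr D}$, so that (b) becomes $\tau_{\mathscr D}E_i\cong\Sigma^{k_i-1}E_{i+1}$, and Serre duality $\mathrm{Ext}^1(X,Y)\cong D\,\mathrm{Hom}(Y,\tau X)$, which converts every $\mathrm{Ext}$ appearing in condition (c) into a $\mathrm{Hom}$.

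First I would show every $E_i$ is regular. Composing the $k$ instances of (b) gives $\tau_{\mathscr D}^{\,k}E_1\cong\Sigma^{N}E_1$ with $N=\sum_i(k_i-1)$, so $E_1$ is $\tau_{\mathscr D}$-periodic up to shift. On the Grothendieck group $\tau_{\mathscr D}$ acts by the Coxeter transformation $\Phi$ and $\Sigma$ by $-\mathrm{id}$, whence $\Phi^{k}[E_1]=\pm[E_1]$; since $\Phi$ preserves the defect while a shift reverses its sign, a transjective class (which has nonzero defect and infinite $\Phi$-orbit) cannot satisfy this. Hence $E_1$—and, being a $\tau$-translate of it, every $E_i$—is regular. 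For a regular module $\tau_{\mathscr D}$ coincides with the ordinary AR-translate $\tau$ and returns a module, so (b) forces $k_i=1$ and $E_{i+1}\cong\tau E_i$ for all $i$; thus the $E_i$ form a single $\tau$-orbit, lie in one tube $T$, and $\tau^{k}E_1\cong E_1$.

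Next I would split on the rank $r$ of $T$. If $r=1$ then $\tau E_1\cong E_1$, so a length-$\ge2$ cycle repeats objects and, more decisively, no module in a homogeneous tube is exceptional: the quasi-simple $S$ has $\mathrm{Ext}^1(S,S)\cong D\,\mathrm{Hom}(S,\tau S)=D\,\mathrm{Hom}(S,S)=\Bbbk$. This same computation shows $\mathbb S(S)\cong\Sigma S$ and $\mathrm{Hom}^{\bullet}(S,S)=\Bbbk\oplus\Sigma^{-1}\Bbbk$, identifying the quasi-simples of homogeneous tubes as exactly the exceptional $1$-cycles (higher quasi-length modules there fail to be bricks), which is the $1$-cycle claim. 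If $r\ge2$, write $E_1=M[a,l]$ for the uniserial regular module with regular socle $S_a$ and quasi-length $l$; exceptionality forces $1\le l\le r-1$. When $l\ge2$, the composite of the surjection onto the regular top $S_{a+l-1}$ with the inclusion of $S_{a+l-1}$ as the regular socle of $\tau^{\,r+1-l}E_1$ is a nonzero morphism $E_1\to\tau^{\,r+1-l}E_1$, and since $r+1-l\in\{2,\dots,r-1\}$ lies outside $\{0,1\}\pmod r$ this violates condition (c). Therefore $l=1$, the orbit $\{E_1,\tau E_1,\dots,\tau^{r-1}E_1\}$ is the set of $r$ quasi-simples of $T$, and if $k$ were a proper multiple $cr$ the identification $E_{r+1}=E_1$ would give $\mathrm{Hom}^{\bullet}(E_1,E_{r+1})=\Bbbk\ne0$ with $r+1\notin\{1,2\}$, again contradicting (c); so $k=r$.

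Finally I would verify the construction direction: for a quasi-simple $E$ in a tube of rank $r\ge2$, the sequence $(E,\tau E,\dots,\tau^{r-1}E)$ satisfies (a) since distinct regular simples are bricks and $\mathrm{Ext}^1(\tau^iE,\tau^iE)\cong D\,\mathrm{Hom}(\tau^iE,\tau^{i-1}E)=0$, satisfies (b) with every $k_i=1$ because $\tau^rE\cong E$, and satisfies (c) since $\mathrm{Hom}(\tau^iE,\tau^jE)=\delta_{ij}\Bbbk$ while $\mathrm{Ext}^1(\tau^iE,\tau^jE)\cong D\,\mathrm{Hom}(\tau^jE,\tau^{i+1}E)$ is nonzero precisely when $j=i+1$. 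The main obstacle will be the tube bookkeeping in the $l\ge2$ step—pinning down which rotation $\tau^tE_1$ receives the top-to-socle map and checking $t\notin\{0,1\}\bmod r$—together with the defect argument excluding transjective periodicity; once these two inputs are in place, everything else is a routine application of uniseriality of tubes and Serre duality.
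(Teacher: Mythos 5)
The paper does not prove this statement at all: it is imported verbatim from \cite{guo2020exceptional} (Theorem 1.3) and stamped with a \qed, so there is no internal argument to compare against. Your proposal is a genuine self-contained derivation from standard Euclidean representation theory, and it is essentially correct. The skeleton is sound: normalizing each $E_i$ to a module using the shift freedom in condition (b); forcing regularity via $\Phi^k[E_1]=\pm[E_1]$ together with $\Phi$-invariance of the defect and the infinitude of transjective $\Phi$-orbits; deducing $E_{i+1}\cong\tau E_i$ so the cycle is a single $\tau$-orbit in one tube; killing quasi-length $l\ge 2$ by the top-to-socle map $E_1\twoheadrightarrow S_{a+l-1}\hookrightarrow\tau^{r+1-l}E_1$ with $r+1-l\in\{2,\dots,r-1\}$; and excluding $k=cr$ with $c\ge 2$ via $\mathrm{Hom}^\bullet(E_1,E_{r+1})\ne 0$. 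Two small repairs: first, the Auslander--Reiten formula gives $\mathrm{Ext}^1(\tau^iE,\tau^iE)\cong D\,\mathrm{Hom}(\tau^iE,\tau^{i+1}E)$, not $D\,\mathrm{Hom}(\tau^iE,\tau^{i-1}E)$ --- harmless here since both Hom spaces between distinct quasi-simples vanish, but the index should be fixed for the verification of (c), where the direction matters. Second, your claim that the homogeneous quasi-simples are \emph{exactly} the exceptional $1$-cycles is only half-proved: you show they are $1$-cycles and that higher quasi-length modules in homogeneous tubes fail, but you should add one sentence noting that the $1$-cycle condition $\tau_{\mathscr D}E\cong\Sigma^{l-1}E$ forces $E$ regular (same defect argument) with $l=1$ and $\tau E\cong E$, which rules out transjective objects and all modules in tubes of rank $\ge 2$. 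With those additions the argument is complete; what your route buys over the paper's bare citation is an explicit proof using only the Coxeter/defect formalism, Serre duality, and uniseriality of tubes.
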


In order to prove the main result of this section, we will first provide a generating set for Aut$(\mathscr{D})$. Note that this has already been done in \cite{opper2019auto} for gentle algebras; however, here we will provide an explicit list of twist functors which generate the orientation-preserving homeomorphisms of the annulus. To do this, we will need to introduce a few important automorphisms.

\begin{defn}
For any $X \in \mathscr{D}$, define a functor $F_X: \mathscr{D} \rightarrow \mathscr{D}$ by $Y \mapsto \text{Hom}^{\bullet}(X,Y) \otimes_{\Bbbk} X$. There is a canonical evaluation morphism of functors $F_X \rightarrow \mathbbm{1}_{\mathscr{D}}$, which extends to direct sums in the following sense. For a sequence of objects $X_* = (X_1, X_2, \dots, X_k)$, we define the associated \textbf{twist functor} $T_{X_*}$ as the cone of this evaluation morphism: $$F_{X_*} \rightarrow \mathbbm{1}_{\mathscr{D}} \rightarrow T_{X_*} \rightarrow \Sigma F_{X_*} \,\,\,\,\,\, \text{with} \,\,\,\,\,\, F_{X_*} = \bigoplus_{i=1}^k F_{X_i}$$
\end{defn}

Twist functors satisfy some nice properties.

\begin{thm}
\leavevmode
\begin{enumerate}
\item (\cite{broomhead2017discrete} Theorem 4.5)  If $E_* = (E_1, E_2, \dots, E_k)$ is an exceptional $k$-cycle in $\mathscr{D}$, then $T_{E_*}$ is an autoequivalence of $\mathscr{D}$.

\item (\cite{huybrechts2006fourier}, Lemma 1.30) The Serre functor commutes with all auto equivalences, including $T_{E_*}$.
\end{enumerate}
\end{thm}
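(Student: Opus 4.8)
Both parts are results drawn from the literature (\cite{broomhead2017discrete}, \cite{huybrechts2006fourier}); here I sketch the arguments I would give.

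For (1), the plan is to show that $T_{E_*}$ is fully faithful and essentially surjective, following the spherical-twist method of Seidel--Thomas adapted to exceptional cycles. Since $\mathscr{D}$ is $\Bbbk$-linear, $\text{Hom}$-finite, and has a Serre functor, the functor $F_{E_*} = \bigoplus_i F_{E_i}$ admits left and right adjoints, hence so does $T_{E_*}$; this is what makes a spanning-class argument available. The spanning class I would use consists of the cycle objects $E_1, \dots, E_k$ together with the subcategory $\mathcal{E}^{\perp} = \{Y : \text{Hom}^{\bullet}(E_i, Y) = 0 \text{ for all } i\}$. On $\mathcal{E}^{\perp}$ the source $F_{E_*}$ of the defining triangle vanishes, so $T_{E_*}$ restricts to the identity and is trivially fully faithful there. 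The real computation is to evaluate $T_{E_*}$ on each $E_j$: using exceptionality $\text{Hom}^{\bullet}(E_j,E_j) = \Bbbk$ and the orthogonality axiom $\text{Hom}^{\bullet}(E_i,E_j)=0$ for $j \neq i,i+1$, one gets
\[
F_{E_*}(E_j) = E_j \oplus \bigl(\text{Hom}^{\bullet}(E_{j-1},E_j) \otimes_{\Bbbk} E_{j-1}\bigr),
\]
and the evaluation map is the identity on the $E_j$-summand, so the cone $T_{E_*}(E_j)$ is a shift of $E_{j-1}$. Thus $T_{E_*}$ permutes $\langle E_1,\dots,E_k\rangle$. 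The Serre-shift relation $\mathbb{S}(E_i)\cong\Sigma^{k_i}(E_{i+1})$ is precisely what keeps the relevant pairings balanced, so that this permutation is invertible and compatible with the shifts. Combining the two pieces via the spanning-class criterion yields full faithfulness, and since the image is a triangulated subcategory containing a spanning class and $T_{E_*}$ has adjoints, it is essentially surjective. The hard part will be the bookkeeping of the shifts $k_i$ in the cone computation and verifying that the adjunction counit is an isomorphism uniformly across the two spanning pieces; this is exactly where all three cycle axioms must be used together.

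For (2), I would argue from the uniqueness of the Serre functor, which is characterized up to natural isomorphism by the bifunctorial isomorphism $\text{Hom}(A,B) \cong \text{Hom}(B,\mathbb{S}A)^{*}$. Given any autoequivalence $F$, set $G = F\mathbb{S}F^{-1}$; then for all $A,B$,
\[
\text{Hom}(B, GA) \cong \text{Hom}(F^{-1}B, \mathbb{S}F^{-1}A) \cong \text{Hom}(F^{-1}A, F^{-1}B)^{*} \cong \text{Hom}(A,B)^{*},
\]
where the outer isomorphisms use that $F$ is fully faithful and the middle one is Serre duality for $\mathbb{S}$. Hence $G$ satisfies the defining property of a Serre functor, so $G \cong \mathbb{S}$ by uniqueness, i.e.\ $F\mathbb{S} \cong \mathbb{S}F$. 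Taking $F = T_{E_*}$, an autoequivalence by part (1), gives the statement. This direction is essentially formal, and the only point needing care is the naturality of the displayed isomorphisms in both variables, which is immediate from bifunctoriality of Serre duality.
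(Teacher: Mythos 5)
The paper does not prove this theorem at all: both parts are quoted directly from the literature (\cite{broomhead2017discrete} Theorem 4.5 and \cite{huybrechts2006fourier} Lemma 1.30), so there is no in-paper argument to compare against. Your sketches correctly reproduce the standard proofs from those sources --- the spanning-class/adjoint argument for the twist along an exceptional cycle (where the computation $T_{E_*}(E_j)\cong\Sigma^{?}E_{j-1}$ uses that Serre duality plus $\mathbb{S}(E_{j-1})\cong\Sigma^{k_{j-1}}E_j$ forces $\operatorname{Hom}^{\bullet}(E_{j-1},E_j)$ to be one-dimensional) and the uniqueness-of-the-Serre-functor argument for commutation --- so the proposal is consistent with what the paper relies on.
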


\begin{rem}\label{rem: twist functor commutes with}
Since all autoequivalences commute with $\mathbb{S}$ and $\Sigma$, they also commute with $\tau_{\mathscr{D}}$.
\end{rem}

We will now switch our focus to the case in which $\mathscr{D}$ denotes the bounded derived category of an acyclic quiver $Q$ of type $\tilde{\mathbb{A}}_{n-1}$. In our case, we have two non-homogeneous tubes, the left (right) tube denoted by $\mathscr{L}$ ($\mathscr{R}$).

\begin{lem}\label{lem: twists are twists}
Let $E_{*}$ be an exceptional $k$-cycle in $\Bbbk Q$. 

\begin{enumerate}
\item If $E_*$ consists of the quasi-simple modules in $\mathscr{L}$, then $T_{\mathscr{L}} := T_{E_*}$ acts on the objects of $\mathscr{D}$ in the same way as a counterclockwise elementary twist of the outer boundary component of $A_Q$.
\item If $E_*$ consists of the quasi-simple modules in $\mathscr{R}$, then $T_{\mathscr{R}} := T_{E_*}$ acts on the objects of $\mathscr{D}$ in the same way as a clockwise elementary twist of the inner boundary component of $A_Q$.
\end{enumerate}
\end{lem}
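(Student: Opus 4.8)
The plan is to establish part (1) by a direct cone computation and to deduce part (2) from it by duality. For the reduction, the duality $D=\mathrm{Hom}_\Bbbk(-,\Bbbk)$ of Remark~\ref{rem: Duality on Annulus} interchanges the inner and outer boundary components of $A_Q$ while preserving orientation; it exchanges the left and right tubes and hence carries the exceptional cycle of quasi-simples of $\mathscr{R}$ to that of $\mathscr{L}$. Since $D$ sends a counterclockwise elementary twist of the outer boundary to a clockwise elementary twist of the inner boundary, conjugating the statement of (1) by $D$ yields (2). It therefore suffices to treat $T_{\mathscr{L}}=T_{E_*}$.

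I would first record the two objects to be compared. On the algebraic side, $T_{E_*}$ is an autoequivalence of $\mathscr{D}$ by \cite{broomhead2017discrete} Theorem~4.5, and by \cite{opper2019auto} the autoequivalences of $\mathscr{D}$ are realized by graded mapping classes of $A_Q$, so such a functor is determined up to natural isomorphism by its action on graded arcs and closed curves. Thus it is enough to show that $T_{E_*}$ and $T_{\mathcal L}$ induce the same action on graded arcs and closed curves. On the geometric side I would use that $E_*=(E_1,\dots,E_r)$ consists of the quasi-simple left regular modules, where $r$ is the number of marked points on the outer boundary (the rank of $\mathscr{L}$); under the bijection of Theorem~\ref{thm: ec bijection with arc diagrams} these are the $r$ shortest exterior arcs on the outer boundary, one per marked point, and one first checks that their union separates a collar neighborhood of the outer boundary. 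The elementary twist $T_{\mathcal L}$ is supported in this collar: it fixes every graded arc or closed curve disjoint from the outer boundary and drags each outer endpoint once around the outer boundary otherwise.

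The core of the argument is the action of $T_{E_*}$ on an indecomposable $X$, read off from the defining triangle
\[
\bigoplus_{i=1}^{r}\mathrm{Hom}^\bullet(E_i,X)\otimes_\Bbbk E_i \longrightarrow X \longrightarrow T_{E_*}(X)\longrightarrow \Sigma\!\bigoplus_{i=1}^{r}\mathrm{Hom}^\bullet(E_i,X)\otimes_\Bbbk E_i .
\]
Here the graded dimensions of $\mathrm{Hom}^\bullet(E_i,X)$ are computed from the graph-map and extension bases of Theorems~\ref{thm: basis for Hom} and~\ref{thm: basis for ext} together with Proposition~\ref{prop: clockwise arcs and hom}. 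When $\gamma_X$ is disjoint from the outer boundary, that is, when $X$ is a shift of a right regular module or of a band, all of these spaces vanish, so $T_{E_*}(X)\cong X$, exactly as for $T_{\mathcal L}$. When $\gamma_X$ meets the outer boundary, precisely the $E_i$ sharing that endpoint contribute, and I would identify the cone of the evaluation map with the graded arc obtained by winding the outer endpoint of $\gamma_X$ once around the outer boundary in the direction of the twist; the cases to run through are the bridging arcs (one outer endpoint) and the outer exterior arcs, including the $E_i$ themselves (two outer endpoints). Finally, $T_{E_*}$ commutes with $\Sigma$ and $\tau_{\mathscr D}$ by Remark~\ref{rem: twist functor commutes with}, matching the corresponding equivariance of $T_{\mathcal L}$ and pinning down the grading bookkeeping consistently across the tube.

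The main obstacle I anticipate is the precise cone computation in the non-disjoint cases, where one must track both the winding number $\lambda$ and the grading function $f$: the exceptional-cycle data $\mathbb{S}(E_i)\cong\Sigma^{k_i}E_{i+1}$ enters through the shifts $k_i$, and one has to verify that summing the contributions of the relevant $E_i$ produces exactly one additional wind of the correct sign and grading, rather than several or a wind in the wrong direction. The delicate consistency check is the behaviour on the mouth of the tube, where $X=E_j$ and $T_{E_*}$ must reproduce the tube's internal rotation; matching this with the collar-supported Dehn twist is what ties the cone computation to the geometric statement.
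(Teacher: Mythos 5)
Your plan follows essentially the same route as the paper's proof: both hinge on computing the cone of the evaluation triangle $F_{E_*}(X)\to X\to T_{E_*}(X)$, use Hom--Ext orthogonality of the tubes to see that $T_{E_*}$ fixes every object whose arc or curve avoids the outer boundary, and invoke commutation with $\Sigma$ and $\tau_{\mathscr{D}}$ (Remark \ref{rem: twist functor commutes with}) to propagate the computation, with part (2) dispatched by duality/symmetry. The one economy the paper makes that you might adopt: instead of running through all bridging and outer exterior arcs, it computes the cone only for a projective $P_i$ --- where exactly one quasi-simple, $E_{i-1}$, contributes, and only through $\mathrm{Ext}$, so that $F_{\mathscr{L}}(P_i)\cong\Sigma^{-1}E_{i-1}$ and the cone is a single hook addition at the start of the string, i.e.\ one step up a ray --- and then separately for the quasi-simples $E_i$ themselves to capture the internal rotation of $\mathscr{L}$; $\Sigma$- and $\tau$-equivariance covers every other non-regular object. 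This also defuses the worry in your final paragraph: of the two quasi-simples meeting the outer endpoint of $\gamma_X$, only one contributes to $\mathrm{Hom}^\bullet(E_*,X)$, so the summed contribution is one-dimensional and there is no risk of producing more than a single wind.
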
 

\begin{proof}
We will prove 1 as the proof of 2 is analogous. Moreover, we will assume that the left tube has rank at least 2, as again, the proof in the case the rank is 1 is analogous. Whence, let $P_i$ be the projective module at vertex $i$. Then there are precisely two left regular modules with which $P_i$ has non-trivial Hom/Ext. Call these two regulars $E_i$ and $E_{i-1}$ and assume Hom$(P_i,E_i)\cong \Bbbk$ and Ext$(E_{i-1},P_i)\cong \Bbbk$. In particular, this means that as strings, $P_i = \alpha_i\alpha_{i+1}\cdots\alpha_{i+k}$ and $E_{i-1} = \beta_j\beta_{j+1}\cdots\alpha_{i-1}$. We have 

\begin{align*}
F_{\mathscr{L}}(P_i) &:= \text{Hom}^{\bullet}(E_*,P_i)\otimes E_* \\
&= \text{Hom}^{\bullet}(E_{i-1},P_i) \otimes E_{i-1} \\
&= \Sigma^{-1}\text{Hom}(E_{i-1},\Sigma P_i) \otimes E_{i-1} \\
&\cong \Sigma^{-1}E_{i-1}
\end{align*}

To compute the twist, we analyze the following triangle

$$\Sigma^{-1} E_{i-1} \rightarrow P_i \rightarrow T_{\mathscr{L}}(P_i) \rightarrow E_{i-1}$$

The unique completion of this triangle is the string $\beta_j\beta_{j+1}\cdots\alpha_{i-1}\alpha_i\cdots\alpha_{i+k}$. Since $E_{i-1}$ is a quasi-simple, this is equivalent to adding a hook at the start of $P_i$, which is equivalent to moving $P_i$ one position up a ray. Since twist functors commute with $\Sigma$ and $\tau$ by Remark \ref{rem: twist functor commutes with}, we have $T_{\mathscr{L}}$ moves each non-regular object one space {\color{black} up a ray}, and $T^{-1}_{\mathscr{L}}$ moves each non-regular object one space {\color{black} down a ray}; which is equivalent to a counterclockwise elementary twist of the outer boundary component by the dual statement of Lemma 4.7 in \cite{maresca2022combinatorics}.

Next, we see that $T_{\mathscr{L}}$ is the identity on all other tubes, since the modules in $\mathscr{L}$ and the modules in all other tubes are Hom-Ext orthogonal. This is equivalent to the action of a (counter)clockwise twist of the outer boundary component. 

Finally, let $X_*= (X, \tau X, \tau^2 X, \dots, \tau^k X)$ be a $\tau$-cycle of modules in $\mathscr{L}$. Then since $T_{\mathscr{L}}$ is an autoequivalence, we have that $T_{\mathscr{L}}(X_*)$ is a cyclic permutation of $X_*$ up to shift. Thus to understand what $T_{\mathscr{L}}$ does to $\mathscr{L}$, it suffices to analyze it on $E_*$:

\begin{align*}
F_{\mathscr{L}}(E_i) &:= \text{Hom}^{\bullet}(E_*,E_i)\otimes E_* \\
&= \text{Hom}^{\bullet}(E_{i-1},E_i) \otimes E_{i-1} \oplus \text{Hom}^{\bullet}(E_{i},E_i) \otimes E_{i}\\
&= \Sigma^{-1}\text{Hom}(E_{i-1},\Sigma E_i) \otimes E_{i-1} \oplus \text{Hom}(E_{i},E_i) \otimes E_{i} \\
&\cong \Sigma^{-1}E_{i-1} \oplus E_i\\ &\cong \Sigma^{-1}\tau^{-1} E_i \oplus E_i
\end{align*}

Thus there is a triangle 

$$\Sigma^{-1}\tau^{-1} E_i \oplus E_i \rightarrow E_i \rightarrow T_{\mathscr{L}}(E_i) \rightarrow \tau^{-1} E_i \oplus \Sigma E_i,$$

whose unique completion is $\tau^{-1} E_i$, which is the image of $E_{i}$ after performing an elementary  counterclockwise twist of the outer boundary component by Lemma 4.4 2. (b) in \cite{maresca2022combinatorics}.
\end{proof}

\begin{cor} \label{cor: twist functors act transitively}
The twist functors $T_{\mathcal{R}}$ and $T_{\mathcal{L}}$, together with the shift functor $\Sigma$, act transitively on all objects of $\mathscr{D}$ that are not shifts of regular modules.
\end{cor}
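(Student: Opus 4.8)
The plan is to pass to the geometric model for $\mathscr{D}$ and to separate the problem into two independent pieces: the underlying (ungraded) arc, which will be handled by the twist functors, and the grading, which will be handled by $\Sigma$. First I would record that the objects of $\mathscr{D}$ that are not shifts of regular modules are exactly the shifts of preprojective and preinjective modules, and that under the bijection between indecomposable string objects and graded arcs these correspond precisely to the graded \emph{bridging} arcs $(\gamma,f)$: exterior arcs correspond to left/right regular string modules and closed curves to band modules, so discarding the regulars leaves exactly the bridging arcs together with a choice of grading. Since $\Sigma$ fixes the underlying arc $\gamma$ and only changes $f$, while $T_{\mathcal{L}}$ and $T_{\mathcal{R}}$ act on the annulus $A_{Q}$ as elementary twists of the outer and inner boundary components and fix the regular tubes pointwise (Lemma \ref{lem: twists are twists}), the group $\langle T_{\mathcal{L}},T_{\mathcal{R}},\Sigma\rangle$ preserves the set of graded bridging arcs. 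It therefore suffices to show (a) that $\langle T_{\mathcal{L}},T_{\mathcal{R}}\rangle$ acts transitively on underlying bridging arcs, and (b) that, for a fixed bridging arc, $\langle\Sigma\rangle$ acts transitively on its gradings.

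For (a), fix a source bridging arc and a target bridging arc $a(i',j')[\lambda']$, whose outer endpoint is $i'$, inner endpoint is $j'$, and winding number is $\lambda'$ (here $A_{Q}$ carries $q$ outer and $p$ inner marked points, with $p+q=n$). I would proceed in three steps. Because $T_{\mathcal{L}}$ is a twist of the outer boundary, it fixes the inner endpoint of any bridging arc and cyclically advances the outer endpoint by one marked point; this is the geometric content of the \emph{up a ray} description in Lemma \ref{lem: twists are twists}. Hence a suitable power of $T_{\mathcal{L}}$ moves the outer endpoint to $i'$. Next, since $T_{\mathcal{R}}$ is a twist of the inner boundary, it fixes the (now correct) outer endpoint and advances the inner endpoint, so a suitable power of $T_{\mathcal{R}}$ moves the inner endpoint to $j'$ without disturbing the outer endpoint. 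At this stage both endpoints agree with the target, and two simple arcs joining the same pair of boundary points on an annulus differ only by their winding number; applying the full boundary twist $T_{\mathcal{L}}^{\,q}$, which returns the outer endpoint to its position and changes the winding number by one while fixing the inner endpoint, an appropriate number of times corrects the winding to $\lambda'$. Thus the source and target arcs lie in a single $\langle T_{\mathcal{L}},T_{\mathcal{R}}\rangle$-orbit.

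For (b), the gradings of a fixed arc form a torsor over $\mathbb{Z}$ (a grading is determined by its value at a single marked point), and $\Sigma$ shifts the grading by $\pm 1$ while leaving $\gamma$ unchanged, so $\langle\Sigma\rangle$ acts transitively on them. Combining the two pieces, given any two non-regular-shift objects I would first use $T_{\mathcal{L}}$ and $T_{\mathcal{R}}$ to carry the underlying arc of the first onto that of the second as in (a), and then use a power of $\Sigma$ to match the gradings as in (b) without disturbing the underlying arc; this proves transitivity. The main obstacle is step (a): one must justify carefully the precise effect of an elementary boundary twist on a bridging arc, namely that $T_{\mathcal{L}}$ fixes the inner endpoint and advances the outer endpoint by exactly one marked point, and that $q$ such elementary twists amount to a single full Dehn twist changing the winding number by one. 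This is exactly the geometric translation of Lemma \ref{lem: twists are twists}, and once it is in hand the endpoint-then-winding bookkeeping, together with the classification of simple bridging arcs between two fixed boundary points by winding number, completes the argument.
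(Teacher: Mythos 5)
Your proof is correct and follows essentially the same route as the paper's: the paper's argument is a one-line appeal to Lemma \ref{lem: twists are twists} (identifying $T_{\mathscr{L}}$ and $T_{\mathscr{R}}$ with elementary twists of the outer and inner boundary components) together with Remark \ref{rem: twist functor commutes with}, and your endpoint-then-winding-number-then-grading bookkeeping is exactly what makes that appeal rigorous. Your decomposition into (a) transitivity of the boundary twists on underlying bridging arcs and (b) transitivity of $\Sigma$ on the gradings of a fixed arc is precisely the content the paper treats as immediate.
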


\begin{proof}
This follows immediately from Remark \ref{rem: twist functor commutes with} and Lemma \ref{lem: twists are twists}.
\end{proof}

We will now introduce two more types of autoequivalences that will be important in what is to come. These were defined in \cite{opper2019auto}, and we will follow those notations. First, recall that an automorphism of $\Bbbk Q$ induces an automorphism of $\mathscr{D}$. The first important autoequivalence is the following. Let $f: \Bbbk Q \rightarrow \Bbbk Q$ be an algebra automorphism that fixes every vertex of $Q$ and {\color{black} for each arrow $\alpha$,} $f(\alpha) = \lambda_{\alpha} \cdot \alpha$ for some $\lambda_{\alpha} \in \Bbbk^{\times}$, the multiplicative group of $\Bbbk$. The derived equivalences associated to these automorphisms are called \textbf{rescaling equivalences}, and they form a subgroup of Aut$(\mathscr{D})$.

The next type is called a coordinate transformation and is split into two parts. Let $Q$ denote the Kronecker quiver. Then there is a natural embedding $\text{PGL}_2(\Bbbk) \hookrightarrow \text{Aut}(\mathscr{D})$, and we call the automorphisms in the image \textbf{projective coordinate transformations}. 

On the other hand, for a quiver of type $\tilde{\mathbb{A}}_{n-1}$ where $n >2$, we will define an `affine' coordinate transformation for each subquiver $Q'$ of $Q$ of the form

\[ \begin{xymatrix}{i \ar[rr]^{\alpha} \ar[dr]_{\beta_1} & & j \\ & \cdots \ar[ur]_{\beta_m} & }\end{xymatrix}\]

For any such $Q'$ and $(\lambda_{\alpha}, \lambda_{\beta_1}, \dots, \lambda_{\beta_m})\in (\Bbbk^{\times})^{m+1}$, we define {\color{black} $\beta = \beta_1\cdots\beta_m$,} and an algebra automorphism $\sigma: \Bbbk Q \rightarrow \Bbbk Q$ by

\[ \sigma(\gamma) = \begin{cases} 
      \lambda_{\alpha}\alpha + \lambda_{\beta}\beta & \gamma = \alpha \\
      \lambda_i\beta_i & \gamma = \beta_i \\
   \end{cases}
\]

We call the associated derived equivalence an \textbf{affine coordinate transformation}. We will denote by $\mathscr{K}$ the subgroup of Aut$(\mathscr{D})$ that is generated by rescaling equivalences and coordinate transformations. We note that by \cite[Lemma 5.2]{opper2019auto}, the shift functor $\Sigma$ is an element of $\mathscr{K}$. 

In the next result, we provide an explicit generating set for Aut$(\mathscr{D})$. This has been done more generally in \cite{opper2019auto}; however, here we will provide explicit twist functors and a different proof inspired by the proof of Theorem 5.7 in \cite{broomhead2017discrete}. We require this result to be stated in this form to prove our main result of the section.

\begin{thm}\label{thm: gen set for derived}
The set $\mathscr{K} \cup \{T_{\mathscr{L}}, T_{\mathscr{R}}\}$ is a generating set for Aut$(\mathscr{D})$.
\end{thm}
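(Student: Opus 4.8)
The plan is to show that an arbitrary autoequivalence $\Phi \in \text{Aut}(\mathscr{D})$ can be written as a composition of elements of $\mathscr{K}$ together with powers of $T_{\mathscr{L}}$ and $T_{\mathscr{R}}$. The strategy is to first normalize $\Phi$ using the transitivity statement (Corollary \ref{cor: twist functors act transitively}), and then argue that once a single non-regular object and its relevant neighbors are pinned down, the remaining freedom lies entirely in $\mathscr{K}$. Since $\mathscr{K} \cup \{T_{\mathscr{L}}, T_{\mathscr{R}}\}$ is visibly contained in $\text{Aut}(\mathscr{D})$, only the reverse inclusion needs proof.

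Concretely, I would proceed as follows. Fix a non-regular indecomposable object, say the projective $P_1$ viewed as a stalk complex. Given $\Phi \in \text{Aut}(\mathscr{D})$, the object $\Phi(P_1)$ is again an exceptional object; since $\Phi$ commutes with the Serre functor and $\Sigma$ (hence with $\tau_{\mathscr{D}}$) by Remark \ref{rem: twist functor commutes with}, it must send non-regular objects to non-regular objects (regular objects are exactly those lying in the $\tau$-periodic tubes, a property preserved by any autoequivalence). By Corollary \ref{cor: twist functors act transitively}, there exist integers $a,b$ and a power of $\Sigma$ such that, after composing $\Phi$ with an appropriate word $T_{\mathscr{L}}^{a} T_{\mathscr{R}}^{b} \Sigma^{c}$, the resulting autoequivalence $\Psi$ fixes $P_1$ up to isomorphism. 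Because $\Sigma \in \mathscr{K}$ by the cited $\cite[\text{Lemma }5.2]{opper2019auto}$, this reduction only costs us elements of the claimed generating set. It then remains to show that any autoequivalence fixing $P_1$ lies in $\mathscr{K}$.

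For the final step I would use the rigidity of the Auslander--Reiten structure. An autoequivalence $\Psi$ fixing $P_1$ commutes with $\tau_{\mathscr{D}}$ and preserves irreducible maps, so it must fix the entire AR-component of $P_1$ (the preprojective component), and by the established conventions on rays and corays it is forced to act as the identity on the underlying arcs/homotopy classes of all string objects; the only remaining freedom is a choice of scalars on the arrow spaces and on the one-parameter families in the homogeneous tubes. These scalar ambiguities are exactly what rescaling equivalences, projective coordinate transformations (on the Kronecker pieces), and affine coordinate transformations (on the subquivers of the displayed shape) are designed to absorb. Thus $\Psi$ agrees with an element of $\mathscr{K}$ on homotopy classes and on scalars, whence $\Psi \in \mathscr{K}$, giving $\Phi \in \mathscr{K} \cup \{T_{\mathscr{L}}, T_{\mathscr{R}}\}$-generated group.

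I expect the main obstacle to be the last step: rigorously justifying that an autoequivalence which fixes a single non-regular object and commutes with $\tau_{\mathscr{D}}$ is determined up to the coordinate and rescaling transformations in $\mathscr{K}$. This requires care in separating the combinatorial action (on homotopy classes of graded arcs, which is rigidified by the transitivity and uniqueness results) from the linear-algebraic action on the finitely many scalar parameters, and in checking that every such scalar datum is realized by some element of $\mathscr{K}$ — in particular matching the homogeneous-tube parameters with projective coordinate transformations and the branching behavior at the relevant vertices with affine coordinate transformations. The proof of Theorem 5.7 in $\cite{broomhead2017discrete}$ provides the template for this bookkeeping, and I would follow it closely while substituting our explicit twist functors $T_{\mathscr{L}}, T_{\mathscr{R}}$ from Lemma \ref{lem: twists are twists} for the more abstract generators used there.
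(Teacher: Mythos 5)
Your proposal follows essentially the same route as the paper: normalize an arbitrary autoequivalence by composing with $\Sigma^a T_{\mathscr{L}}^b T_{\mathscr{R}}^c$ (using Corollary \ref{cor: twist functors act transitively} and Remark \ref{rem: twist functor commutes with}) so that it fixes a chosen non-regular object, then argue that the resulting autoequivalence fixes the homotopy classes of all objects and hence lies in $\mathscr{K}$. The one place you diverge is the final step, which you rightly flag as the delicate part: rather than re-deriving by hand that the residual scalar freedom is absorbed by rescaling and coordinate transformations, the paper closes this step by citing Theorem 5.1 of \cite{opper2018geometric}, which characterizes $\mathscr{K}$ as precisely the subgroup of $\mathrm{Aut}(\mathscr{D})$ fixing all homotopy classes, so you can replace your proposed bookkeeping with that citation.
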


\begin{proof}
{\color{black}
By Theorem 5.1 in \cite{opper2019auto}, we have that $\mathscr{K}$ is the set of automorphisms of $\mathscr{D}$ generated by rescaling equivalences and coordinate transformations. Moreover, these autoequivalence don't arise from orientation-preserving homeomorphisms of $A_Q$. }

Let $\psi \in \text{Aut}(\mathscr{D})$ and choose an indecomposable object $X \in \mathscr{D}$ such that $X$ is not the shift of a regular module. Then by Corollary \ref{cor: twist functors act transitively} and Remark \ref{rem: twist functor commutes with}, there exist $a,b,c \in \mathbb{Z}$ such that $\Sigma^a T_{\mathscr{L}}^b T_{\mathscr{R}}^c (X) = \psi(X)$. Then the automorphism $\varphi = \Sigma^a T_{\mathscr{L}}^b T_{\mathscr{R}}^c \psi^{-1}$ fixes $X$. Since all autoequivalences commute with $\tau_{\mathscr{D}}$ (Remark \ref{rem: twist functor commutes with}), we have that $\varphi$ fixes the $\tau_{\mathscr{D}}$-orbit of $X$. Since we have a quiver of type $\tilde{\mathbb{A}}$, this implies that $\varphi$ fixes all objects that are not shifts of regular modules {\color{black} or reflects these objects over the $\tau_{\mathscr{D}}$-orbit of $X$. In the latter case, $\varphi$ corresponds to an orientation reversing homeomorphism of $A_Q$ and $\varphi \in \mathscr{K}$}.

{\color{black}
If $\varphi$ fixes all shifts of projectives, it must either fix all shifts of quasi-simple regular modules, and hence all modules in the left and right tubes, or swap the two exceptional tubes in the case they have the same rank, which corresponds again to an orientation reversing homeomorphism of $A_Q$. Thus $\varphi \in \mathscr{K}$ and $\psi \in \langle \mathscr{K}, T_{\mathscr{L}},T_{\mathscr{R}}\rangle$.}
\end{proof}

\begin{cor} \label{cor: iso HE quivers iff derived equiv}
Two exceptional collections $\chi_1$ and $\chi_2$ in rep$Q$ have {\color{black} isomorphic} Hom-Ext quivers if and only if there exists an embedding $\psi: \text{rep}(Q) \hookrightarrow \mathscr{D}$ such that there is an autoequivalence {\color{black} $\varphi\in \langle T_{\mathscr{L}},T_{\mathscr{R}}\rangle$} with $\varphi(\psi(\chi_1)) = \psi(\chi_2). \hfill \qed$ 
\end{cor}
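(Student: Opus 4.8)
The plan is to deduce this corollary by combining Theorem \ref{thm: iso H-E quiver iff Dehn twist} with the identification of Dehn twists and twist functors in Lemma \ref{lem: twists are twists}, together with the generating set of Theorem \ref{thm: gen set for derived}. Throughout I take $\psi$ to be the standard embedding sending a module $M$ to its stalk complex concentrated in degree $0$. By construction this embedding satisfies $\mathrm{Hom}_{\mathscr{D}}(\psi M, \psi N) = \mathrm{Hom}(M,N)$ and $\mathrm{Hom}_{\mathscr{D}}(\psi M, \Sigma \psi N) = \mathrm{Ext}(M,N)$, so that the graded endomorphism algebra $\mathrm{End}_{\mathscr{D}}(\psi(\chi) \oplus \Sigma\psi(\chi))$ recovers the Ext algebra whose quiver with relations is $(Q^{\chi}, R)$ by Definition \ref{defn: H-E quiver in D}.

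For the forward direction, suppose $Q^{\chi_1} \cong Q^{\chi_2}$. By Theorem \ref{thm: iso H-E quiver iff Dehn twist} there are integers $a,b$ with $D_{\chi_1} = T_{\mathcal{L}}^{a} T_{\mathcal{R}}^{b}(D_{\chi_2})$. By Lemma \ref{lem: twists are twists} the geometric twists $T_{\mathcal{L}}, T_{\mathcal{R}}$ are realized on objects of $\mathscr{D}$ by the twist functors $T_{\mathscr{L}}, T_{\mathscr{R}}$, which are autoequivalences; moreover these functors send modules to modules (they act by adding or deleting hooks and by permuting quasi-simples within the tubes), so under the bijection of Theorem \ref{thm: ec bijection with arc diagrams} the composite $\varphi := (T_{\mathscr{L}}^{a} T_{\mathscr{R}}^{b})^{-1}$ carries the stalk complexes $\psi(\chi_1)$ onto $\psi(\chi_2)$. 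This produces the required pair $(\psi, \varphi)$.

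For the converse, suppose $\varphi(\psi(\chi_1)) = \psi(\chi_2)$ for some $\varphi \in \mathrm{Aut}(\mathscr{D})$. I would argue on homotopy classes. By Theorem \ref{thm: gen set for derived} we may write $\varphi$ as a word in elements of $\mathscr{K}$ and in $T_{\mathscr{L}}^{\pm 1}, T_{\mathscr{R}}^{\pm 1}$. Each element of $\mathscr{K}$ fixes the homotopy class of every object (this is exactly the characterization of $\mathscr{K}$ quoted in the proof of Theorem \ref{thm: gen set for derived}), while $T_{\mathscr{L}}$ and $T_{\mathscr{R}}$ act on homotopy classes as the Dehn twists $T_{\mathcal{L}}, T_{\mathcal{R}}$ along the two disjoint boundary components by Lemma \ref{lem: twists are twists}. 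Since Dehn twists along disjoint curves commute on isotopy classes, the total action of $\varphi$ on arc diagrams is $T_{\mathcal{L}}^{a} T_{\mathcal{R}}^{b}$, where $a$ and $b$ are the net exponents of $T_{\mathscr{L}}$ and $T_{\mathscr{R}}$ in the word. Passing $\varphi(\psi(\chi_1)) = \psi(\chi_2)$ to homotopy classes therefore yields $D_{\chi_2} = T_{\mathcal{L}}^{a} T_{\mathcal{R}}^{b}(D_{\chi_1})$, and Theorem \ref{thm: iso H-E quiver iff Dehn twist} gives $Q^{\chi_1} \cong Q^{\chi_2}$.

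The main obstacle I anticipate is the bookkeeping in the converse: one must check that the $\mathscr{K}$-part of $\varphi$ truly contributes nothing to the arc diagram and that the remaining twist-part collapses to a single product $T_{\mathcal{L}}^{a} T_{\mathcal{R}}^{b}$ rather than an unmanageable word, which is where the disjointness of the two boundary curves (hence commutativity of the corresponding Dehn twists on isotopy classes) is essential. A secondary point to verify is that in the forward direction the twist functors introduce no shifts, so that $\varphi(\psi(\chi_1))$ again consists of stalk complexes of modules; this is guaranteed by the explicit hook/cohook description in Lemma \ref{lem: twists are twists}. An alternative, more algebraic route to the converse avoids the generating set entirely: any autoequivalence preserves $\mathrm{Hom}^{\bullet}$ and composition, hence induces an isomorphism of the graded Ext algebras of $\psi(\chi_1)$ and $\psi(\chi_2)$, which by Definition \ref{defn: H-E quiver in D} is precisely an isomorphism $(Q^{\chi_1}, R) \cong (Q^{\chi_2}, R')$.
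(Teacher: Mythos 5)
Your proposal is correct and follows the route the paper intends: the corollary is left as an immediate consequence of Theorem \ref{thm: iso H-E quiver iff Dehn twist}, Lemma \ref{lem: twists are twists}, and Theorem \ref{thm: gen set for derived}, which is exactly the combination you carry out, including the key observation that elements of $\mathscr{K}$ fix homotopy classes while $T_{\mathscr{L}}, T_{\mathscr{R}}$ act as the two boundary Dehn twists. Your closing remark that the converse also follows directly from the fact that any autoequivalence preserves the graded endomorphism algebra $\mathrm{End}_{\mathscr{D}}(\chi\oplus\Sigma\chi)$, hence the Hom-Ext quiver by Definition \ref{defn: H-E quiver in D}, is a valid and cleaner shortcut for that direction.
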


\begin{exmp}{\color{black}
    The four Hom-Ext quivers in Example \ref{exmp: classification of H-E quivers} describe all the exceptional collections of representations of \xymatrix{
1 \ar[r]_{\alpha_1} \ar@/^1pc/[rr]^{\alpha_3} & 2 \ar[r]_{\alpha_2} & 3 } up to autoequivalence of $\mathscr{D}^b(\text{rep}Q)$}. 
\end{exmp}

\begin{question}
The previous corollary shows that studying exceptional collections of modules in type $\tilde{\mathbb{A}}$ up to {\color{black} isomorphic} Hom-Ext quivers is {\color{black} equivalent} to studying exceptional collections up to derived auto-equivalence. We wonder, is this the case in general, or at least in the hereditary case? If so, the Hom-Ext quiver would be a useful tool in studying exceptional collections of modules over representation infinite algebras.
\end{question}

\section{Superquivers} \label{sec: superquivers}
{Hom-Ext quivers are examples of what we call ``superquivers''. An exceptional collection will be a representation of this superquiver. This interpretation leads to a number of questions and directions of future research which we hope readers will find interesting and motivating. One question we are interested in is: Is every superquiver a Hom-Ext quiver for a suitable choice of algebra?

We assemble basic properties of Hom-Ext quivers into this concept.

\begin{defn}\label{defn: superquiver} By a \textbf{superquiver} we mean a quiver $Q$ with relations with two kinds of arrows which we distinguish by \textbf{degrees} which can be 0 or 1. {\color{black}One of the relations is that any path of degree $\ge2$ is zero} where the degree of a path is defined to be the sum of the degrees of its arrows. Thus, the path algebra of $Q$ is $\mathbb Z/2$-graded and the product of any two paths of degree 1 is defined to be 0.

A \textbf{representation} of a superquiver in a triangulated category $\mathcal C$ consists of the following.
\begin{enumerate}
\item For each $v\in Q_0$ we have one object $M_v$ of $\mathcal C$.
\item For every degree 0 arrow $\alpha:v\to w$ we have a morphism $M_\alpha: M_v\to M_w$.
\item For every degree 1 arrow $\beta:v\to w$ we have a morphism $M_\beta:M_v\to \Sigma M_w$ {\color{black}which ``classifies'' an extension $M_w\to E(M_\beta)\to M_v$.}
\end{enumerate}
Given arrows $\alpha:u\to v$ of degree 0 and $\beta:v\to w$ of degree 1, {\color{black}we define $M_{\alpha\beta}:M_u\to \Sigma M_w$ to be the morphism which classifies the pull-back $(M_\alpha)^\ast(E(M_\beta))$ of $E(M(\beta))$ along $M_\alpha:M_u\to M_v$. If $\gamma:w\to t$ is an arrow of degree 0, we define $M_{\gamma\beta}:M_v\to \Sigma M_t$ to be the composition $\Sigma M_\gamma\circ M_\beta:M_v\to \Sigma M_t$ which classifies the push-forward of $E(M_{\beta})$ along $M_{\gamma}$}. We require any path of degree $\ge2$ in $Q$ to go to the zero morphism. This condition will automatically be satisfied if $\mathcal C$ is the bounded derived category of {\color{black} mod$\Lambda$} for $\Lambda$ a hereditary algebra.
\end{defn}
}

{
\begin{defn} \label{defn: superquiver twist}
    Given a superquiver $Q$, we allow selected arrows to be \textbf{frozen} and we define a \textbf{twist} of $Q$ to be another superquiver $\widetilde Q$ having the property that $Q$ {\color{black} and} $\widetilde Q$
 are isomorphic as quivers with relations with the restriction that this isomorphism sends frozen arrows to frozen arrows and the degrees of corresponding frozen arrows are the same. One example is the \textbf{trivial twist} which is defined to be the underlying quiver with relations and frozen arrows having all unfrozen arrows of degree 0 (and frozen arrows keeping the degrees that they had).
 \end{defn}
}

{
Given an exceptional collection, its Hom-Ext quiver is a superquiver with frozen arrows and the exceptional collection is a representation of this superquiver. But not all representations of superquivers are Hom-Ext quivers. We would like to characterize those representations which might be Hom-Ext quivers. One necessary property is that the representation should be ``irreducible'' which will be defined in terms of its ``inner endomorphism algebra.''

\begin{defn}
    Given a representation $M$ of a superquiver $Q$ in a hereditary category $\mathcal C$ we define the \textbf{endomorphism ring} of $M$ to be the $\mathbb Z/2$ graded algebra End$_\ast(M)=\text{End}_0(M)\oplus \text{End}_1(M)$ where $\text{End}_0(M)$ is the endomorphism ring of $M=\bigoplus M_v$ and $\text{End}_1(M)=\text{Hom}(M,\Sigma M)$. Since $\mathcal C$ is hereditary, the composition of any two morphisms in $\text{End}_1(M)$ will be zero. We say that the representation $M$ is \textbf{irreducible} if the morphism $M_\alpha$ for $\alpha\in Q_1$ do not lie in $r^2\text{End}_\ast(M)$. 
   \end{defn}

\textbf{Question}: Is it true that a superquiver is a Hom-Ext quiver if and only if it admits at least one irreducible representation?

\textbf{Conjecture}: The irreducible $\mathcal C$ representations of any superquiver are unique up to autoequivalences of $\mathcal C$.

For an exceptional collection of type $\tilde{\mathbb{A}}_n$ the frozen arrows of its Hom-Ext quiver are defined to be those whose endpoints are both regular modules. The statement is:

\begin{thm} \label{thm: superquiver twists}
If two exceptional collections for derived equivalent algebras of type $\tilde{\mathbb{A}}_n$ are equivalent by {\color{black} a sequence of twist functors associated to exceptional cycles}, their Hom-Ext quivers are twist equivalent as superquivers with frozen arrows.
\end{thm}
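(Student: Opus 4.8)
The plan is to extract from the triangle equivalence a concrete isomorphism of the underlying quivers with relations, and then check that it respects the frozen decoration and the degrees carried by frozen arrows. Write $\Phi\colon \mathscr{D}^b(\mathrm{mod}\,\Lambda_1)\xrightarrow{\sim}\mathscr{D}^b(\mathrm{mod}\,\Lambda_2)$ for the equivalence carrying $\chi_1$ to $\chi_2$, both algebras being of type $\tilde{\mathbb{A}}_n$. Since an exceptional collection is only pinned down up to shifts of its individual members, I record this as $\Phi(X_i)=\Sigma^{a_i}Y_{\sigma(i)}$ for modules $Y_j\in\chi_2$, a permutation $\sigma$, and integers $a_i$. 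By Definition \ref{defn: H-E quiver in D}, the Hom-Ext quiver is the quiver with relations of $\mathrm{End}_{\mathscr{D}}(\chi\oplus\Sigma\chi)$ after identifying each object with its shift. A triangle equivalence commutes with $\Sigma$, so it carries this algebra for $\chi_1$ isomorphically onto the one for $\chi_2$; passing to the $\sim$-quotient (which is $\Sigma$-compatible) produces, by the same mechanism as Corollary \ref{cor: iso HE quivers iff derived equiv}, an isomorphism $\phi$ of ungraded algebras $\Bbbk Q^{\chi_1}/R\cong\Bbbk Q^{\chi_2}/R'$ sending the vertex $X_i$ to $Y_{\sigma(i)}$. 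Crucially, $\phi$ need not preserve the degree (Hom versus Ext) of an arrow, since the $a_i$ may vary from vertex to vertex, and this is precisely the ambiguity that the twist of Definition \ref{defn: superquiver twist} is designed to absorb.

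Next I would show that $\phi$ sends frozen arrows to frozen arrows. Frozen arrows are those joining two regular modules. Because $\Phi$ commutes with $\Sigma$ and intertwines the Serre functors (\cite{huybrechts2006fourier} Lemma 1.30), it commutes with $\tau_{\mathscr{D}}$; since the regular objects of $\mathscr{D}$ are exactly the $\tau_{\mathscr{D}}$-periodic ones, $\Phi$ carries regular modules to shifts of regular modules. Hence each $Y_{\sigma(i)}$ with $X_i$ regular is again regular, so $\phi$ maps the regular subquiver onto the regular subquiver, i.e.\ frozen arrows to frozen arrows and unfrozen arrows to unfrozen arrows.

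The heart of the argument is that $\phi$ preserves the degree of each frozen arrow. In type $\tilde{\mathbb{A}}_n$ an exceptional collection contains no band modules (the quasi-simples of a homogeneous tube self-extend, hence are not exceptional), so every regular member lies in one of the two non-homogeneous tubes $\mathscr{L},\mathscr{R}$. A triangle equivalence permutes the connected components of the regular part of the Auslander--Reiten quiver of $\mathscr{D}$, namely the shifted tubes; as each tube is connected, $\Phi$ must carry the tube $\mathscr{L}$ into a single shifted tube $\Sigma^{m_{\mathscr{L}}}\mathscr{T}$ at one fixed level $m_{\mathscr{L}}$, and likewise for $\mathscr{R}$. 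Thus the shift $a_i$ is constant on each tube. Since regular modules in distinct tubes are Hom-Ext orthogonal \cite{bluebook2}, no frozen arrow bridges $\mathscr{L}$ and $\mathscr{R}$; every frozen arrow $X_i\to X_j$ therefore has both ends in one tube, so $a_i=a_j$ and the relative shift is trivial. Consequently a Hom arrow between regulars stays a Hom arrow and an Ext arrow stays an Ext arrow under $\phi$, which is exactly the condition in Definition \ref{defn: superquiver twist} that corresponding frozen arrows have equal degree. Combined with the first two steps, this exhibits $\phi$ as a twist equivalence of superquivers with frozen arrows.

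The main obstacle is precisely this degree-preservation on frozen arrows: one must exclude the possibility that $\Phi$ shifts the two endpoints of a frozen arrow by different amounts. The connectedness-of-tubes observation settles it, provided one is careful that $\Phi$ cannot split a tube across shift levels and that distinct tubes carry no mutual morphisms or extensions. An alternative, shift-free route would invoke the generating set of Theorem \ref{thm: gen set for derived}: the elements of $\mathscr{K}$ fix every homotopy class and hence the entire superquiver structure, degrees included, while $T_{\mathscr{L}}$ and $T_{\mathscr{R}}$ act on each tube as a $\tau$-rotation (an autoequivalence of that tube) or as the identity, so they preserve the degrees of arrows internal to a tube while possibly changing the degrees of arrows incident to non-regular vertices, which is again exactly a twist. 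Either route reduces the theorem to the bookkeeping above.
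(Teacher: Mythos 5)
Your proposal is correct, and it is in fact considerably more detailed than the paper's own proof, which consists of the single sentence that the theorem ``is a restatement of Corollary \ref{cor: iso HE quivers iff derived equiv}.'' That corollary only delivers an isomorphism of the Hom-Ext quivers as quivers with relations; the additional content of Theorem \ref{thm: superquiver twists} --- that the isomorphism can be chosen to send frozen arrows to frozen arrows and to preserve the degree of each frozen arrow --- is exactly what your argument supplies and what the paper leaves implicit. Your two key observations are the right ones: (i) an exact equivalence commutes with $\Sigma$ and $\mathbb{S}$, hence with $\tau_{\mathscr{D}}$, so it preserves $\tau$-periodicity and therefore carries shifts of regular objects to shifts of regular objects, which gives frozen $\mapsto$ frozen; and (ii) each shifted tube $\Sigma^i\mathscr{L}$, $\Sigma^i\mathscr{R}$ is a connected component of the Auslander--Reiten quiver of $\mathscr{D}$, so the equivalence sends a whole tube into a single shifted tube, forcing the shift $a_i$ to be constant on each tube; since exceptional regular modules live only in the two non-homogeneous tubes and distinct tubes are Hom-Ext orthogonal, every frozen arrow has both endpoints in one tube and its degree is preserved, while the varying shifts at non-regular vertices are absorbed by the twist of Definition \ref{defn: superquiver twist}. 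The alternative route you sketch through the generating set of Theorem \ref{thm: gen set for derived} (elements of $\mathscr{K}$ fix everything; $T_{\mathscr{L}}$, $T_{\mathscr{R}}$ act on each tube as a $\tau$-rotation or the identity) is equally valid and closer in spirit to how the paper organizes Section \ref{sec: H-E quiver and autoequivalences}. In short: same conclusion as the paper, but with the frozen-arrow bookkeeping actually carried out rather than asserted.
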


This is a restatement of Corollary \ref{cor: iso HE quivers iff derived equiv} above. We believe the converse of this theorem also holds.

\begin{exmp}
Let $\Lambda$ be the path algebra of the $\tilde{\mathbb{A}}_3$ quiver:
\[
\xymatrix{
1 \ar[r]\ar@/^1pc/[rrr] & 
	2 \ar[r]&
	3 \ar[r] & 4 
	} 
\]

The exceptional collection {\color{black} of simple modules} $\{S_1,S_2,S_3,S_4\}$ has Hom-Ext quiver

\begin{center}
\begin{tabular}{c}

\tikzset{every picture/.style={line width=0.75pt}} 

\begin{tikzpicture}[x=0.75pt,y=0.75pt,yscale=-1,xscale=1]

\draw   (451.91,80.75) .. controls (451.91,68.99) and (463.13,59.45) .. (476.98,59.45) .. controls (490.83,59.45) and (502.05,68.99) .. (502.05,80.75) .. controls (502.05,92.51) and (490.83,102.05) .. (476.98,102.05) .. controls (463.13,102.05) and (451.91,92.51) .. (451.91,80.75)(419.96,80.75) .. controls (419.96,51.34) and (445.49,27.5) .. (476.98,27.5) .. controls (508.47,27.5) and (534,51.34) .. (534,80.75) .. controls (534,110.16) and (508.47,134) .. (476.98,134) .. controls (445.49,134) and (419.96,110.16) .. (419.96,80.75) ;

\draw    (476.98,59.45) .. controls (504.86,42.71) and (516.86,55.53) .. (534,80.75) ;
\draw [color={rgb, 255:red, 245; green, 166; blue, 35 }  ,draw opacity=1 ]   (419.96,80.75) .. controls (429.44,62.02) and (449.47,38.17) .. (476.98,59.45) ;
\draw [color={rgb, 255:red, 208; green, 2; blue, 27 }  ,draw opacity=1 ]   (419.96,80.75) .. controls (431.79,107.22) and (447.12,123.54) .. (476.98,134) ;
\draw [color={rgb, 255:red, 22; green, 48; blue, 226 }  ,draw opacity=1 ]   (476.98,134) .. controls (510.76,116.01) and (520.19,110.98) .. (534,80.75) ;
\draw    (64,70) .. controls (107.56,44.26) and (159.94,45.96) .. (200.77,70.26) ;
\draw [shift={(202,71)}, rotate = 211.37] [color={rgb, 255:red, 0; green, 0; blue, 0 }  ][line width=0.75]    (10.93,-3.29) .. controls (6.95,-1.4) and (3.31,-0.3) .. (0,0) .. controls (3.31,0.3) and (6.95,1.4) .. (10.93,3.29)   ;

\draw (51,70.4) node [anchor=north west][inner sep=0.75pt]    {$S_{1}$};
\draw (102,70.4) node [anchor=north west][inner sep=0.75pt]  [color={rgb, 255:red, 22; green, 48; blue, 226 }  ,opacity=1 ]  {$S_{2}$};
\draw (152,70.4) node [anchor=north west][inner sep=0.75pt]  [color={rgb, 255:red, 208; green, 2; blue, 27 }  ,opacity=1 ]  {$S_{3}$};
\draw (202,70.4) node [anchor=north west][inner sep=0.75pt]  [color={rgb, 255:red, 245; green, 166; blue, 35 }  ,opacity=1 ]  {$S_{4}$};

\draw    (71,80) -- (99,80) ;
\draw [shift={(101,80)}, rotate = 180] [color={rgb, 255:red, 0; green, 0; blue, 0 }  ][line width=0.75]    (10.93,-3.29) .. controls (6.95,-1.4) and (3.31,-0.3) .. (0,0) .. controls (3.31,0.3) and (6.95,1.4) .. (10.93,3.29)   ;
\draw    (120,80) -- (148,80) ;
\draw [shift={(150,80)}, rotate = 180] [color={rgb, 255:red, 0; green, 0; blue, 0 }  ][line width=0.75]    (10.93,-3.29) .. controls (6.95,-1.4) and (3.31,-0.3) .. (0,0) .. controls (3.31,0.3) and (6.95,1.4) .. (10.93,3.29)   ;
\draw    (170,80) -- (198,80) ;
\draw [shift={(200,80)}, rotate = 180] [color={rgb, 255:red, 0; green, 0; blue, 0 }  ][line width=0.75]    (10.93,-3.29) .. controls (6.95,-1.4) and (3.31,-0.3) .. (0,0) .. controls (3.31,0.3) and (6.95,1.4) .. (10.93,3.29)   ;

\draw    (534,80.75) ;
\draw [shift={(534,80.75)}, rotate = 0] [color={rgb, 255:red, 0; green, 0; blue, 0 }  ][fill={rgb, 255:red, 0; green, 0; blue, 0 }  ][line width=0.75]      (0, 0) circle [x radius= 3.35, y radius= 3.35]   ;
\draw    (476.98,134) ;
\draw [shift={(476.98,134)}, rotate = 0] [color={rgb, 255:red, 0; green, 0; blue, 0 }  ][fill={rgb, 255:red, 0; green, 0; blue, 0 }  ][line width=0.75]      (0, 0) circle [x radius= 3.35, y radius= 3.35]   ;
\draw    (476.98,59.45) ;
\draw [shift={(476.98,59.45)}, rotate = 0] [color={rgb, 255:red, 0; green, 0; blue, 0 }  ][fill={rgb, 255:red, 0; green, 0; blue, 0 }  ][line width=0.75]      (0, 0) circle [x radius= 3.35, y radius= 3.35]   ;
\draw    (419.96,80.75) ;
\draw [shift={(419.96,80.75)}, rotate = 0] [color={rgb, 255:red, 0; green, 0; blue, 0 }  ][fill={rgb, 255:red, 0; green, 0; blue, 0 }  ][line width=0.75]      (0, 0) circle [x radius= 3.35, y radius= 3.35]   ;
\end{tikzpicture}

\end{tabular}
\end{center}

\noindent
where all the arrows are extensions (with degree 1). So, the composition of any two arrows is zero. The arrow $S_2\to S_3$ is frozen since $S_2,S_3$ are regular modules. This exceptional collection can be twisted to give:

\begin{center}
\begin{tabular}{c}

\tikzset{every picture/.style={line width=0.75pt}} 

\begin{tikzpicture}[x=0.75pt,y=0.75pt,yscale=-1,xscale=1]

\draw   (451.91,80.75) .. controls (451.91,68.99) and (463.13,59.45) .. (476.98,59.45) .. controls (490.83,59.45) and (502.05,68.99) .. (502.05,80.75) .. controls (502.05,92.51) and (490.83,102.05) .. (476.98,102.05) .. controls (463.13,102.05) and (451.91,92.51) .. (451.91,80.75)(419.96,80.75) .. controls (419.96,51.34) and (445.49,27.5) .. (476.98,27.5) .. controls (508.47,27.5) and (534,51.34) .. (534,80.75) .. controls (534,110.16) and (508.47,134) .. (476.98,134) .. controls (445.49,134) and (419.96,110.16) .. (419.96,80.75) ;
\draw    (476.98,59.45) .. controls (399,40) and (455,185) .. (534,80.75) ;
\draw [color={rgb, 255:red, 245; green, 166; blue, 35 }  ,draw opacity=1 ]   (419.96,80.75) .. controls (439.12,50.87) and (460.91,40.31) .. (486,41) .. controls (511.09,41.69) and (535.18,80.33) .. (496.09,106.17) .. controls (457,132) and (415,61) .. (476.98,59.45) ;
\draw [color={rgb, 255:red, 208; green, 2; blue, 27 }  ,draw opacity=1 ]   (419.96,80.75) .. controls (431.79,107.22) and (447.12,123.54) .. (476.98,134) ;
\draw [color={rgb, 255:red, 22; green, 48; blue, 226 }  ,draw opacity=1 ]   (476.98,134) .. controls (510.76,116.01) and (520.19,110.98) .. (534,80.75) ;
\draw    (71,80) -- (99,80) ;
\draw [shift={(101,80)}, rotate = 180] [color={rgb, 255:red, 0; green, 0; blue, 0 }  ][line width=0.75]    (10.93,-3.29) .. controls (6.95,-1.4) and (3.31,-0.3) .. (0,0) .. controls (3.31,0.3) and (6.95,1.4) .. (10.93,3.29)   ;
\draw    (120,80) -- (148,80) ;
\draw [shift={(150,80)}, rotate = 180] [color={rgb, 255:red, 0; green, 0; blue, 0 }  ][line width=0.75]    (10.93,-3.29) .. controls (6.95,-1.4) and (3.31,-0.3) .. (0,0) .. controls (3.31,0.3) and (6.95,1.4) .. (10.93,3.29)   ;
\draw    (170,80) -- (198,80) ;
\draw [shift={(200,80)}, rotate = 180] [color={rgb, 255:red, 0; green, 0; blue, 0 }  ][line width=0.75]    (10.93,-3.29) .. controls (6.95,-1.4) and (3.31,-0.3) .. (0,0) .. controls (3.31,0.3) and (6.95,1.4) .. (10.93,3.29)   ;
\draw    (64,70) .. controls (107.56,44.26) and (159.94,45.96) .. (200.77,70.26) ;
\draw [shift={(202,71)}, rotate = 211.37] [color={rgb, 255:red, 0; green, 0; blue, 0 }  ][line width=0.75]    (10.93,-3.29) .. controls (6.95,-1.4) and (3.31,-0.3) .. (0,0) .. controls (3.31,0.3) and (6.95,1.4) .. (10.93,3.29)   ;

\draw (51,70.4) node [anchor=north west][inner sep=0.75pt]    {$P_{2}$};
\draw (102,70.4) node [anchor=north west][inner sep=0.75pt]  [color={rgb, 255:red, 22; green, 48; blue, 226 }  ,opacity=1 ]  {$S_{2}$};
\draw (152,70.4) node [anchor=north west][inner sep=0.75pt]  [color={rgb, 255:red, 208; green, 2; blue, 27 }  ,opacity=1 ]  {$S_{3}$};
\draw (202,70.4) node [anchor=north west][inner sep=0.75pt]  [color={rgb, 255:red, 245; green, 166; blue, 35 }  ,opacity=1 ]  {$P_{1}$};

\draw    (534,80.75) ;
\draw [shift={(534,80.75)}, rotate = 0] [color={rgb, 255:red, 0; green, 0; blue, 0 }  ][fill={rgb, 255:red, 0; green, 0; blue, 0 }  ][line width=0.75]      (0, 0) circle [x radius= 3.35, y radius= 3.35]   ;
\draw    (476.98,134) ;
\draw [shift={(476.98,134)}, rotate = 0] [color={rgb, 255:red, 0; green, 0; blue, 0 }  ][fill={rgb, 255:red, 0; green, 0; blue, 0 }  ][line width=0.75]      (0, 0) circle [x radius= 3.35, y radius= 3.35]   ;
\draw    (476.98,59.45) ;
\draw [shift={(476.98,59.45)}, rotate = 0] [color={rgb, 255:red, 0; green, 0; blue, 0 }  ][fill={rgb, 255:red, 0; green, 0; blue, 0 }  ][line width=0.75]      (0, 0) circle [x radius= 3.35, y radius= 3.35]   ;
\draw    (419.96,80.75) ;
\draw [shift={(419.96,80.75)}, rotate = 0] [color={rgb, 255:red, 0; green, 0; blue, 0 }  ][fill={rgb, 255:red, 0; green, 0; blue, 0 }  ][line width=0.75]      (0, 0) circle [x radius= 3.35, y radius= 3.35]   ;

\end{tikzpicture}

\end{tabular}
\end{center}
where $P_2\to S_2$ and $P_2\to P_1$ are {\color{black} morphisms}, $S_3\to P_1$ is an extension as is the arrow $S_2\to S_3$, {\color{black} which} is still frozen, and the composition $S_2\to S_3\to I_3$ is zero in Ext$(S_2,I_3)$ since $I_3$ is injective. Thus, these Hom-Ext quivers are twist equivalent as superquivers with frozen arrows.
\end{exmp}

}


\nocite{*}
\bibliographystyle{amsalpha}
\bibliography{bibliography}

\end{document}